\newtheorem{thm}{Theorem}[section]
\newtheorem{prop}[thm]{Proposition}
\newtheorem{lem}[thm]{Lemma}
\newtheorem{cor}[thm]{Corollary}
\newtheorem{rem}[thm]{Remark}
\newtheorem{rems}[thm]{Remarks}
\newtheorem{defi}[thm]{Definition}
\newtheorem{exo}{\bf\large Exercice}
\newcommand{\R}{\mathbb{R}}
\newcommand{\N}{\mathbb{N}}
\newcommand{\C}{\mathbb{C}}
\newcommand{\ds}{\displaystyle}
\newcommand{\I}{\infty}
\newcommand{\Sum}{\displaystyle \sum}
\newcommand{\Int}{\displaystyle \int}
\newcommand{\Inf}{\displaystyle \inf}
\newcommand{\Lim}{\displaystyle \lim}
\newcommand{\beq}{\begin{eqnarray}}
\newcommand{\eeq}{\end{eqnarray}}
\newcommand{\bq}{\begin{equation}}
\newcommand{\eq}{\end{equation}}
\newcommand{\beqn}{\begin{eqnarray*}}
\newcommand{\eeqn}{\end{eqnarray*}}
\newcommand{\bex}{\begin{exo}}
\newcommand{\eex}{\end{exo}}
\newcommand{\ben}{\begin{enumerate}}
\newcommand{\een}{\end{enumerate}}
\let\wt=\widetilde
\def\refeq#1{~(\ref{#1})}
\def\inte#1{
\displaystyle\mathop{#1\kern0pt}^\circ }
\def\cC{{\mathcal C}}
\def\cD{{\mathcal D}}
\def\cL{{\mathcal L}}
\def\cP{{\mathcal P}}
\def\cS{{\mathcal S}}
\def\na{\nabla}
\newcommand{\EQ}[1]{\begin{equation} \begin{split} #1
 \end{split} \end{equation}}
\let\wt=\widetilde
\newcommand{\lec}{{\, \lesssim \, }}
\date{\today}
\begin{document}
%@@@@@@@@@@@@@@@@@@@@@@@@@@@@@@@@@@@@%@@@@@@@@@@@@@@@@@@@@@@@@@@@@@@@@@@@@%@@@@@@@@

  \title[Scattering for the critical 2-D NLS with exponential growth]{Scattering for the critical 2-D NLS with exponential growth}

\author[H. Bahouri]{Hajer Bahouri}
\address[H. Bahouri]%
{Laboratoire d'Analyse et de Math{\'e}matiques Appliqu{\'e}es UMR 8050 \\
Universit\'e Paris-Est Cr\'eteil \\
61, avenue du G{\'e}n{\'e}ral de Gaulle\\
94010 Cr{\'e}teil Cedex, France}
\email{hbahouri@math.cnrs.fr}

\author[S. Ibrahim]{Slim Ibrahim}
\address[S. Ibrahim]%
{Department of Mathematics and Statistics\\
University of Victoria\\
PO Box 3060, STN CSC,\\
Victoria, BC, V8P 5C3, Canada}
\email{ibrahims@uvic.ca}
\thanks{The second author  is partially supported by NSERC\# 371637-2009 grant.}

\author[G. Perelman]{Galina Perelman}
\address[G. Perelman]%
{Laboratoire d'Analyse et de Math{\'e}matiques Appliqu{\'e}es UMR 8050 \\
Universit\'e Paris-Est Cr\'eteil \\
61, avenue du G{\'e}n{\'e}ral de Gaulle\\
94010 Cr{\'e}teil Cedex, France}
\email{galina.perelman@u-pec.fr}

 \begin{abstract}
 In  this article, we establish in the radial framework the $H^1$-scattering for  the critical 2-D nonlinear Schr\"odinger equation with exponential growth. Our strategy relies on both the a priori estimate derived in \cite{CGT, PV} and the characterization  of the lack of
compactness of the Sobolev embedding of $H_{rad}^1(\R^2)$ into the critical Orlicz space ${\cL}(\R^2)$ settled in \cite{BMM}.    The radial setting, and particularly  the fact that we
deal with bounded functions far away from the origin, occurs  in a
crucial way in our approach.  \end{abstract}
\keywords{NLS; scattering; Strichartz}
\subjclass[2010]{}

 \maketitle

%@@@@@@@@@@@@@@@@@@@@@@@@@@@@@@@@@@@@%@@@@@@@@@@@@@@@@@@@@@@@@@@@@@@@@@@@@

%@@@@@@@@@@@@@@@@@@@@@@@@@@@@@@@@@@@@%@@@@@@@@@@@@@@@@@@@@@@@@@@@@@@@@@@@@%@@@@@@@@

%@@@@@@@@@@@@@@@@@@@@@@@@@@@@@@@@@@@@%@@@@@@@@@@@@@@@@@@@@@@@@@@@@@@@@@@@@%@@@@@@@@@@@@@@
\maketitle \tableofcontents

%@@@@@@@@@@@@@@@@@@@@@@@@@@@@@@@@@@@@%@@@@@@@@@@@@@@@@@@@@@@@@@@@@@@@@@@@@%@@@@@@@@@@@@@@@

\section{Introduction and statement of   the results}
\subsection{Setting of the problem and main result}
We are interested in the two dimensional   nonlinear Schr\"odinger equation:
\begin{equation}
\label{NLS} \left\{
 \begin{aligned}
  &i\partial_t u+\Delta u = f(u), \\
  &u_{|t=0}=u_{0}\in H_{rad}^1(\R^2),
 \end{aligned}
\right.
\end{equation}
 where the function $u$  with complex values depends on $(t,x) \in \R \times \R^2$, and  the  nonlinearity $f:\C\to\C$ is defined by
\begin{equation} \label{def f}
 f(u) = \left({\rm e}^{4\pi |u|^2}-1-4\pi |u|^2\right)u.
\end{equation}
Let us emphasize that  the solutions
  of  the Cauchy problem \eqref{NLS}-\eqref{def f} formally satisfy the conservation of mass
and Hamiltonian
\begin{equation}
\label{mass} M(u,t):=\int_{\R^2}|u(t,x)|^2  dx \quad \mbox{and}
\end{equation}
\begin{equation}
\label{hamil} H(u,t):=\int_{\R^2}\, \Big(|\na u (t,x)|^2 +F(u(t,x))\Big)\, dx,
\end{equation}
where
$$
F(u)=\frac{1}{4\pi}\left({\rm e}^{4\pi |u|^2}-1-4\pi |u|^2-8\pi^2 |u|^4 \right).
$$

\smallskip
 The question of the existence of global solutions for the Cauchy problem \eqref{NLS}-\eqref{def f} was investigated in  \cite{CIMM} and  subcritical,
critical and supercritical regimes in the energy space was
identified.  This notion of criticality is related to the size  of the initial
Hamiltonian $H(u_0)$ with respect to
$1$. More precisely,   the concerned Cauchy problem  is said to be subcritical if $H(u_0) < 1$,  critical if $H(u_0) = 1$ and  supercritical  if $H(u_0) > 1$. \\

\smallskip

 In \cite{CIMM},  the authors established   in both subcritical and critical regimes  the existence of global solutions in the functional space ${\cC}(\R, H^1(\R^2))\cap L_{loc}^4(\R, W^{1,4}(\R^2)) $, and proved that
well-posedness fails to hold in the supercritical one. Thereafter in \cite{IMMN2}, the scattering problem for the concerned nonlinear Schr\"odinger equation  has been solved in the subcritical case.\\

\smallskip
Note that several works have been devoted to the     nonlinear Schr\"odinger equation  \eqref{NLS}. In particular,  one can mention the following  a priori estimate proved independently by Colliander-Grillakis-Tzirakis and by Planchon-Vega in
\cite{CGT, PV}
 \begin{equation}
\label{estpriori} \|u\|_{L^4(\R,L^8)} \lec \|u\|_{L^\I(\R,L^2)}^{3/4} \|\na u\|_{L^\I(\R,L^2)}^{1/4}, \end{equation}
available for any global solution $u$  in $L^\I(\R,H^1)$. \\

\bigskip

\bigskip
%%%%%%%%%%%%%%%%%%%%%%%%%%%%%%%%%%%%%%%%%%%%%%%%%%%

  The purpose  of this paper is  to investigate the critical case  $H(u_0)=1$ in the radial framework, and to establish   that the $H^1$-scattering also holds in that case. More precisely, our   main  result states as follows:

\begin{thm}
 \label{Main-NLS}
Let  $u$ be a solution to \eqref{NLS}-\eqref{def f}  satisfying
$H(u)=1$, then
\begin{equation} \label{Stg}
u\in L^4(\R, W^{1,4}(\R^2)),
\end{equation}
where \begin{equation}\label{defS'h}
W^{1,4}(\R^2) :=  \left\{f \in {\mathcal S}' (\R^2), \:   \| f\|_{L^4} + \| \nabla f\|_{L^4} < \infty \right\} \, .
\end{equation}
Besides there exist $v_{\pm} \in H_{rad}^1(\R^2)$ such that
$$
\|u (t, \cdot) - {\rm e}^{ i t \Delta} \,v_{\pm}\|_{H^1}\stackrel{t\to\pm\infty}\longrightarrow 0.
$$  \end{thm}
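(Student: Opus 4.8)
The plan is to reduce the scattering statement to the global spacetime bound \eqref{Stg}: once $u\in L^4(\R,W^{1,4}(\R^2))$ is known, the asymptotic states are recovered in the standard way. Writing Duhamel's formula $u(t)=e^{it\Delta}u_0-i\int_0^t e^{i(t-s)\Delta}f(u(s))\,ds$, one sets $v_\pm := u_0 - i\int_0^{\pm\I} e^{-is\Delta}f(u(s))\,ds$ and checks, via the inhomogeneous Strichartz estimate on $[t,\pm\I)$, that $\|u(t)-e^{it\Delta}v_\pm\|_{H^1}$ is controlled by the tail $\|f(u)\|_{L^{4/3}([t,\pm\I),W^{1,4/3})}$, which tends to $0$ as soon as the nonlinearity is globally integrable in the dual Strichartz norm. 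Thus everything hinges on the bound $\|f(u)\|_{L^{4/3}(\R,W^{1,4/3})}<\I$; and since $(4,4)$ is an admissible Schr\"odinger pair in dimension two (so its dual is $(4/3,4/3)$), this same bound fed into the inhomogeneous Strichartz inequality with one derivative also produces \eqref{Stg} directly.

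To control the nonlinearity I would first collect the a priori information. Conservation of mass and Hamiltonian gives $\|u\|_{L^\I(\R,H^1)}\lec 1$; moreover, since $F\ge 0$, the constraint $H(u)=1$ forces $\|\na u(t)\|_{L^2}^2=1-\int_{\R^2}F(u(t))\le 1$ uniformly in $t$. In addition, the estimate \eqref{estpriori} of \cite{CGT,PV} furnishes the global Morawetz-type bound $\|u\|_{L^4(\R,L^8)}\lec 1$. Next I would exploit the radial setting through the Strauss inequality $|u(t,x)|\lec |x|^{-1/2}\|u(t)\|_{H^1}$, which confines the region $\{|u(t,x)|>M\}$ to a fixed ball $B_{R(M)}$ once $M$ is large. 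On the complementary region $\{|u|\le M\}$ the nonlinearity is dominated by the quintic $|u|^5$ (and $|f'(u)|\lec |u|^4$), so the required dual-Strichartz control there follows by interpolating the global $L^4(\R,L^8)$ bound against the uniform $L^\I(\R,H^1)$ bound, exactly as in the defocusing quintic problem.

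The delicate part, and the main obstacle, is the region near the origin where $u$ may be large and the exponential factor $e^{4\pi|u|^2}$ is genuinely present. Here a naive H\"older splitting produces integrals of the form $\int_{B_R} e^{q\,4\pi|u|^2}\,dx$ with $q>1$, which are supercritical for the sharp Moser--Trudinger inequality and hence \emph{not} uniformly bounded by the mere constraint $\|\na u\|_{L^2}^2\le 1$; this is precisely the threshold phenomenon that separates the critical case $H(u)=1$ from the subcritical analysis of \cite{IMMN2}. My strategy would be to quantify when this can occur: writing $\|\na u(t)\|_{L^2}^2=1-\int F(u(t))$, the exponential can build up only at times where the defect $\int F(u(t))$ is small while $u(t)$ develops a concentrating bubble. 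At times with a subcritical gap $\int F(u(t))\ge \de_0>0$ the Moser--Trudinger constant is strictly supercritical ($4\pi/\|\na u\|_{L^2}^2>4\pi$), leaving enough room to absorb $e^{4\pi|u|^2}$ and its gradient. For the remaining threshold times I would invoke the characterization of the lack of compactness of the embedding $H^1_{rad}(\R^2)\hookrightarrow\cL(\R^2)$ from \cite{BMM}, according to which any such concentration must, modulo a remainder that is small in the critical Orlicz norm, be carried by the explicit logarithmic Moser profiles. Arguing by contradiction, if \eqref{Stg} failed one would extract along a sequence of times a profile forced by \cite{BMM} to be a bubble; the final step is then to show that a persistent bubble of this type is incompatible with the global bound $\|u\|_{L^4(\R,L^8)}\lec 1$ of \eqref{estpriori}, so that the dangerous set of times contributes only a finite amount to the spacetime norm.

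I expect the hardest point to be making this last step quantitative: converting the qualitative profile description of \cite{BMM} into an effective estimate on $\int_I\|f(u(t))\|_{W^{1,4/3}(B_R)}^{4/3}\,dt$, and in particular controlling the exponential nonlinearity \emph{together with its gradient} at the borderline Moser--Trudinger constant. The radial hypothesis enters decisively at both places where the argument is tight, namely in the Strauss confinement of the large values of $u$ to a fixed ball and in the availability of the \cite{BMM} decomposition on $H^1_{rad}(\R^2)$.
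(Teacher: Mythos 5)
Your overall reduction (scattering follows from \eqref{Stg} via Duhamel and dual Strichartz) and your inventory of a priori information (conservation laws, $\|\nabla u(t)\|_{L^2}\le 1$ from $F\ge 0$, the estimate \eqref{estpriori}, the Strauss-type radial bounds, and the profile description of \cite{BMM} at threshold times) match the paper's ingredients. The genuine gap is in your final, decisive step: you propose to rule out the critical concentration scenario by showing that a persistent bubble is incompatible with the global bound $\|u\|_{L^4(\R,L^8)}\lesssim 1$. This cannot work as stated, because a concentrating Moser bubble is invisible in every Lebesgue norm: for the profile $f_{\alpha_n}$ of \eqref{Mos-LIons} one has $\|f_{\alpha_n}\|_{L^p}\to 0$ for every finite $p$ (in particular $p=8$) as $\alpha_n\to\infty$, so a family of times carrying such bubbles contributes nothing to $\|u\|_{L^4(\R,L^8)}$ and no contradiction arises. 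The estimate \eqref{estpriori} does have teeth, but only against \emph{non-concentrating} profiles; this is exactly how the paper uses it, in the proof of Lemma \ref{lemet1}, where a fixed nonzero weak limit $\varphi$ keeps $\|u(t)\|_{L^8}$ bounded below on intervals $[t_n,t_n+T]$, contradicting the vanishing of the tail of the global $L^4(\R,L^8)$ norm. Concentration itself must be excluded by a different mechanism.

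The paper supplies that mechanism through two ingredients absent from your outline. First, a virial computation (Lemma \ref{definfinil}) shows that at the critical level $H(u_0)=1$ the solution must send a fixed amount $\varepsilon_0$ of gradient energy beyond radii $R_n\to\infty$ along some times $t_n\to\infty$. Second, in the concentration case the sequence $e^{i\tau_n\Delta}u(t_n,\cdot)$ is, by \cite{BMM}, a Lions profile plus a gradient-small remainder, and the analysis splits according to whether $\tau_n e^{\alpha_n}/\sqrt{\alpha_n}\to\infty$ or stays bounded: in the first sub-case the free evolution on $\R_-$ has Orlicz norm strictly below $1/\sqrt{4\pi}$, so the small-data scattering lemma (Lemma \ref{lem:1}) applies backward in time; in the second, a Fourier-side computation on the Moser profile yields the weighted bound $\| |\cdot|\, e^{-i\tau_n\Delta}\nabla u_{0,n}\|_{L^2}\lesssim 1$, i.e.\ the gradient stays spatially localized, contradicting the virial escape. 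Note also that your time-splitting by the size of the energy defect $\int F(u(t))$ leaves the set of threshold times uncontrolled, whereas the paper's dichotomy is taken on the Orlicz norm of the \emph{free} evolution $e^{it\Delta}u(t_n,\cdot)$ in $L^\infty(\R_+,\wt{\mathcal L})$, which is precisely the quantity Lemma \ref{lem:1} requires; and the paper's first step (Proposition \ref{propstep1}, the strong convergence $e^{it\Delta}u(t_n,\cdot)\to 0$ in $L^\infty(\R_+,L^4)$), for which your proposal has no counterpart, is what legitimizes invoking both the \cite{BMM} decomposition and the small-data lemma in the first place.
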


 %%%%%%%%%%%%%%%%%%%%%%%%%%%%%%%%%%%%%%%%%%%%%%%%%%%

   \subsection{General scheme of the proof}\label{schgen}
   All along this article,  we shall see that the norm $L^\infty(\R, L^4(\R^2))$  will play a decisive  role in the approach adopted to establish our  result.   The main  difficulty  lies in the non-conservation of the $L^4$-norm over time for solutions to the free Schr\"odinger equation.  To investigate the behavior  of this norm,  we shall   resort to the a priori estimate provided in \cite{CGT, PV} taking advantage
of  the fact that   in view of the radial setting, $H^1_{rad}(\R^2)$ embeds on the one hand compactly into
$L^4(\R^2)$ and on the other hand  in $L^\infty(\R^2\setminus\{0\})$.  It turns out that  thanks to this   a priori estimate at hand, the proof of our result  does not require structure theorems as those obtained in \cite{BG, ker, MV}  which played a crucial role for instance in the remarkable work of  \cite{km}.\\

   Roughly speaking, the proof of our main result is done in three steps. In the  first step, we establish that  for any solution $u$ of the Cauchy problem \eqref{NLS}-\eqref{def f}  and any positive real sequence $(t_n)_{n \geq 0}$  tending to $+ \infty$,  the evolution of $u(t_n,\cdot)$ under the flow of  the linear Schr\"odinger equation  converges to zero  in $L^\infty(\R_+, L^4)$. This step constitutes the heart of the matter and the key ingredient to achieve it  is the a priori estimate derived in \cite{CGT, PV}. In   the  second step, we  highlight a lack of compactness at infinity making use of  the virial identity.  Finally, in the third step we complete the proof of our main theorem  by distinguishing two cases: a first case where the norm in $L^\infty(\R_+, \wt {\mathcal L})$ is strictly less than
$ \frac{1}{\sqrt{4\pi}}$ and that we will qualify by the case where the whole mass does not concentrate, and a second case where the norm in $L^\infty(\R_+, \wt {\mathcal L})$ is equal to  $ \frac{1}{\sqrt{4\pi}}$ and that we will designate  by the case where the whole mass  concentrates. The main idea to handle the second  case  which is the more challenging is the explicit description of that situation by means of the example by Moser settled in \cite{BMM, BMM1}. To understand that case, we undertake an analysis  depending on whether the whole mass  concentrates in  small or large times.
    %%%%%%%%%%%%%%%%%%%%%%%%%%%%%%%%%%%%%%%%%%%%%%%%%%%

\subsection{Layout of the paper}

The  paper is organized as follows:   in Section \ref{Tt} we provide the basic tools which are used in this text, namely critical 2-D Sobolev embeddings,  an overview of the lack of compactness of $H^1(\R^2)$ into the Orlicz space and basic facts about the  linear Schr\"odinger equation. In Section \ref{Vid}, we establish several useful estimates. This includes virial identity and properties of solutions to the nonlinear Schr\"odinger equation associated to Cauchy data evolving sub-critically under the flow of the linear equation. Section \ref{sec:mainth}  is devoted to the proof of our main result.  As it is mentioned in Paragraph \ref{schgen}, this is achieved  in three steps:  a first step where the strong convergence to zero of the sequence $({\rm e}^{ i t \Delta}\, u(t_n,\cdot))$ in $L^\infty(\R_+, L^4)$ is settled for any real sequence $(t_n)_{n \geq 0}$  tending to $+ \infty$, a second step where a lack of compactness at infinity is emphasized making use of  virial identity, and lastly a third step where the proof is complete.
Finally, we deal in appendix with
various Moser-Trudinger type inequalities which are of constant use all along this article.

 \medskip

 Finally, we mention that the letter~$C$ will be used to denote a universal constant
which may vary from line to line. We also use~$A\lesssim B$   to
denote an estimate of the form~$A\leq C B$  for some
constant~$C$.
 For simplicity, we shall also still denote by $(u_n)$ any
subsequence of $(u_n)$ and designate by $\circ(1)$ any sequence which tends to $ 0 $ as $n$ goes to infinity.

\section{Technical tools}\label{Tt}

  %%%%%%%%%%%%%%%%%%%%%%%%%%%%%%%%%%%%%%%%%%%%%%%%%%%

\subsection{Critical 2-D Sobolev embedding}
It is well known that $H^1(\R^2)$   embeds continuously into $L^p(\R^2)$ for all $2\leq
p<\infty$ but not in $L^\infty(\R^2)$.
However, resorting to an
interpolation argument, we can estimate the $L^\I$ norm of
functions in $H^1({\mathbb R}^2)$, using a stronger norm but with
a weaker growth (namely logarithmic). More precisely, we have the
following logarithmic estimate which will be needed in this paper:
\begin{lem}[\cite{DlogSob}, Theorem 1.3]
\label{Hmu}
 Let $0<\alpha<1$.  For any $\lambda>\frac{1}{2\pi\alpha}$ and
any $0<\mu\leq1$, a constant $C_{\lambda}>0$ exists such that for
any function $u\in H^1(\R^2)\cap{\mathcal C}^\alpha(\R^2)$, we
have
\begin{equation}
\label{H-mu} \|u\|^2_{L^\infty}\leq
\lambda\|u\|_{H_\mu}^2\log\left(C_{\lambda} +
\frac{8^\alpha\mu^{-\alpha}\|u\|_{{\mathcal
C}^{\alpha}}}{\|u\|_{H_\mu}}\right),
\end{equation}
where ${ {\mathcal C}}^\alpha$ denotes the inhomogeneous H\"older
space of regularity index $\alpha$ and  $H_\mu$  the Sobolev space
endowed with the norm $\|u\|_{H_\mu}^2:=\|\nabla
u\|_{L^2}^2+\mu^2\|u\|_{L^2}^2$.
\end{lem}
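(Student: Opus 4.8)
The plan is to decompose $u$ in frequency and estimate its low and high frequencies by the two norms at our disposal, the sharp logarithmic constant emerging from an \emph{explicit} Fourier-side computation on the low-frequency part. First I would fix a frequency threshold $R\ge 1$ (to be optimized at the end) and use Fourier inversion together with Cauchy--Schwarz on the low-frequency piece. With the normalization $u(x)=\frac{1}{2\pi}\int_{\R^2}\widehat u(\xi)\,{\rm e}^{ix\cdot\xi}\,d\xi$, so that $\|u\|_{H_\mu}^2=\int_{\R^2}(|\xi|^2+\mu^2)|\widehat u(\xi)|^2\,d\xi$, I bound
\[
\Big|\int_{|\xi|\le R}\widehat u(\xi)\,{\rm e}^{ix\cdot\xi}\,d\xi\Big|
\le\Big(\int_{|\xi|\le R}\frac{d\xi}{|\xi|^2+\mu^2}\Big)^{1/2}\|u\|_{H_\mu}.
\]
The remaining integral is computed exactly in polar coordinates, $\int_{|\xi|\le R}\frac{d\xi}{|\xi|^2+\mu^2}=\pi\log\!\big(1+R^2/\mu^2\big)$; it is precisely this evaluation, the factor $\pi$ coming from the area form in $\R^2$, that is responsible for the sharp constant.

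For the high frequencies I would instead use a Littlewood--Paley description and exploit the H\"older regularity through the characterization $\|\Delta_j u\|_{L^\infty}\lec 2^{-j\alpha}\|u\|_{\mathcal C^\alpha}$, which after summation gives
\[
\sum_{2^j>R}\|\Delta_j u\|_{L^\infty}\lec R^{-\alpha}\,\|u\|_{\mathcal C^\alpha}.
\]
Here the sharp value of the implied constant is irrelevant, since this contribution will end up inside the logarithm. Combining the two pieces yields $\|u\|_{L^\infty}\le \frac{1}{2\sqrt\pi}\big(\log(1+R^2/\mu^2)\big)^{1/2}\|u\|_{H_\mu}+C\,R^{-\alpha}\|u\|_{\mathcal C^\alpha}$.

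Finally I would optimize in $R$, choosing $R^\alpha$ of the order of $\mu^{-\alpha}\|u\|_{\mathcal C^\alpha}/\|u\|_{H_\mu}$ (the factor $8^\alpha$ in the statement is exactly the bookkeeping of this choice together with the lower threshold $R\ge 1$), so that the high-frequency term becomes comparable to $\|u\|_{H_\mu}$. Substituting this value of $R$ into the low-frequency logarithm and squaring, the leading coefficient assembles as $\frac{1}{4\pi}\times 2\times\frac1\alpha=\frac{1}{2\pi\alpha}$: the $\frac{1}{4\pi}$ is the low-frequency Plancherel/area constant, the factor $2$ comes from $\log(R^2/\mu^2)\approx 2\log(R/\mu)$, and the factor $\frac1\alpha$ comes from inverting the H\"older cutoff $R^\alpha\sim$ ratio, turning $\log(R/\mu)$ into $\frac1\alpha\log\!\big(\mu^{-\alpha}\|u\|_{\mathcal C^\alpha}/\|u\|_{H_\mu}\big)$. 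This reproduces exactly the argument $C_\lambda+\frac{8^\alpha\mu^{-\alpha}\|u\|_{\mathcal C^\alpha}}{\|u\|_{H_\mu}}$ appearing in \eqref{H-mu}.

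The main obstacle is the \emph{sharpness} of the constant $\frac{1}{2\pi\alpha}$: obtaining it forces one to use the exact low-frequency Fourier computation above rather than a crude Bernstein bound, and then to ensure that the high-frequency contribution together with all the cross terms produced when passing from the bound on $\|u\|_{L^\infty}$ to the square $\|u\|_{L^\infty}^2$ are absorbed either into the additive constant $C_\lambda$ inside the logarithm or into the strict slack $\lambda-\frac{1}{2\pi\alpha}>0$. The hypothesis $\lambda>\frac{1}{2\pi\alpha}$ is exactly what provides this room, so the only genuinely delicate point is a careful accounting showing that every lower-order term fits inside it.
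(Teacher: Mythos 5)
You should first note that the paper contains no proof of this lemma at all: it is imported verbatim from \cite{DlogSob} (Theorem 1.3), so there is no internal argument to compare against. Your reconstruction is, in its architecture, precisely the known Brezis--Gallou\"et-type proof of that cited result: estimate the low frequencies by Cauchy--Schwarz against the weight $(|\xi|^2+\mu^2)^{1/2}$, with the exact evaluation $\int_{|\xi|\le R}\frac{d\xi}{|\xi|^2+\mu^2}=\pi\log(1+R^2/\mu^2)$ supplying the sharp constant; estimate the high frequencies by $R^{-\alpha}\|u\|_{\mathcal C^\alpha}$ through the Littlewood--Paley characterization of H\"older spaces; optimize in $R$; and use the strict inequality $\lambda>\frac{1}{2\pi\alpha}$ together with the free constant $C_\lambda$ to absorb every multiplicative constant and lower-order term. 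That absorption mechanism works exactly as you describe, since $\log(Ky)\le\log K+\log y$ and $(\lambda-\lambda')\log(C_\lambda+y)\ge\lambda'\log K$ once $C_\lambda$ is chosen large.

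Two points, however, need repair before this is a proof, and the second one matters because the entire content of the lemma is the sharp constant. First, you cannot pair a sharp cutoff $\mathbf{1}_{|\xi|\le R}$ on the low piece with dyadic blocks on the high piece: $u$ minus the sharply truncated part is not $\sum_{2^j>R}\Delta_j u$, and sharp Fourier cutoffs are not bounded on $L^\infty$, so the H\"older bound does not apply to the remainder as written. The fix is to split with a smooth low-pass $S_{j_0}u$, $2^{j_0}\sim R$: its symbol is bounded by $1$ and supported in $|\xi|\le 2R$, so the same Cauchy--Schwarz computation goes through with $R^2$ replaced by $4R^2$ inside the logarithm, which is harmless. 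Second, your stated choice of $R$ is inconsistent with the conclusion you draw from it. With $R^\alpha\sim\mu^{-\alpha}\|u\|_{\mathcal C^\alpha}/\|u\|_{H_\mu}$ one gets $R/\mu=\mu^{-2}\big(\|u\|_{\mathcal C^\alpha}/\|u\|_{H_\mu}\big)^{1/\alpha}$, hence $\log(R/\mu)=\frac1\alpha\log\big(\mu^{-2\alpha}\|u\|_{\mathcal C^\alpha}/\|u\|_{H_\mu}\big)$, not $\frac1\alpha\log\big(\mu^{-\alpha}\|u\|_{\mathcal C^\alpha}/\|u\|_{H_\mu}\big)$; the surplus term $\frac{1}{2\pi}\log(1/\mu)$ cannot be absorbed uniformly in $\mu\in(0,1]$ (taking $\|u\|_{\mathcal C^\alpha}/\|u\|_{H_\mu}$ of order one and $\mu\to0$, it degrades the admissible constant to $\lambda>\frac{1}{\pi\alpha}$, i.e.\ loses a factor $2$). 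The correct choice is $R^\alpha\sim\|u\|_{\mathcal C^\alpha}/\|u\|_{H_\mu}$, used when this ratio is at least $1$; in the complementary regime the inequality is trivial, since then $\|u\|_{L^\infty}\le\|u\|_{\mathcal C^\alpha}\le\|u\|_{H_\mu}$ and it suffices to take $C_\lambda\ge {\rm e}^{1/\lambda}$. With these two corrections your outline does yield the lemma with the stated constant.
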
  Otherwise in the radial case which is the setting of this article, we have the following
estimate which implies the control of the $L^\infty$-norm far away
from the origin (see for instance \cite{BMM}):
\begin{lem}
\label{apa3}
 Let $u\in H^1_{rad}(\R^2)$ and $1\leq p<\infty$. Then
\bq \label{BoundLP}
|u(x)|\leq\frac{C_p}{r^{\frac{2}{2+p}}}\,\|u\|_{L^p}^{\frac{p}{p+2}}\|\nabla
u\|_{L^2}^{\frac{2}{p+2}},
 \eq
with $ r= |x|$.  In particular
\begin{eqnarray}
\label{Bound}
|u(x)|&\leq&\frac{C}{r^{\frac{1}{2}}}\,\|u\|_{L^2}^{\frac{1}{2}}\|\nabla
u\|_{L^2}^{\frac{1}{2}} \quad \mbox{and} \\ \label{redestl4bis} |u(x)|&\leq&
\frac{C}{r^{\frac{1}{3}}}\,\|u\|_{L^4}^{\frac{2}{3}}\|\nabla
u\|_{L^2}^{\frac{1}{3}}.\end{eqnarray}
\end{lem}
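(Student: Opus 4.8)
The plan is to reduce \eqref{BoundLP} to a one–dimensional computation in the radial variable and to exploit the monotonicity of the weight on the half–line of integration. Writing $u(x)=u(r)$ with $r=|x|$, I first recall that every $u\in H^1_{rad}(\R^2)$ agrees almost everywhere with a function that is continuous on $\R^2\setminus\{0\}$ and tends to $0$ as $r\to\infty$; by density it suffices to prove the inequality for smooth, compactly supported radial functions and then pass to the limit. The decisive choice is the exponent $m:=\frac{p+2}{2}$. Differentiating and integrating from $r$ to $\infty$, using $\frac{d}{ds}|u|^{m}=m\,|u|^{m-2}\,\mathrm{Re}(\bar u\,u')$ together with $|\mathrm{Re}(\bar u\,u')|\le |u|\,|u'|$, one obtains
\[
|u(r)|^{m}\le m\int_r^{\infty}|u(s)|^{m-1}\,|u'(s)|\,ds .
\]

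Next I would split the weight and apply the Cauchy–Schwarz inequality, writing $|u|^{m-1}|u'|=\big(|u|^{m-1}s^{-1/2}\big)\big(|u'|\,s^{1/2}\big)$, so that
\[
|u(r)|^{m}\le m\Big(\int_r^{\infty}|u|^{2(m-1)}\,s^{-1}\,ds\Big)^{1/2}\Big(\int_r^{\infty}|u'|^{2}\,s\,ds\Big)^{1/2}.
\]
The choice of $m$ now pays off twice. First, $2(m-1)=p$, so the first integrand is exactly $|u|^{p}$. Second, on the range $s\ge r$ one has the elementary bound $s^{-1}\le r^{-2}s$, whence $\int_r^{\infty}|u|^{p}s^{-1}\,ds\le r^{-2}\int_r^{\infty}|u|^{p}s\,ds\lesssim r^{-2}\|u\|_{L^p}^{p}$, the last step being just $\|u\|_{L^p}^p=2\pi\int_0^\infty|u|^p s\,ds$. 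Since likewise $\int_r^\infty|u'|^2 s\,ds\le\frac1{2\pi}\|\nabla u\|_{L^2}^2$, combining the two factors yields $|u(r)|^{(p+2)/2}\lesssim r^{-1}\|u\|_{L^p}^{p/2}\|\nabla u\|_{L^2}$, and raising to the power $\frac{2}{p+2}$ produces precisely \eqref{BoundLP}. The estimates \eqref{Bound} and \eqref{redestl4bis} then follow by specializing to $p=2$ and $p=4$, and one can check the overall consistency of the exponents by a scaling argument, since all three factors transform homogeneously under $u\mapsto u(\lambda\,\cdot)$.

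The computation is short, so the only genuinely delicate point—the effective ``main obstacle''—is the technical justification of the differentiation step rather than any conceptual difficulty. For $1\le p<2$ one has $m-2<0$, so $|u|^{m-2}$ is singular where $u$ vanishes; I would therefore carry out the fundamental–theorem step on the regularized quantity $(\varepsilon+|u|^2)^{m/2}$, whose derivative is controlled by $m\,|u|^{m-1}|u'|$ uniformly in $\varepsilon$ (note $m-1=p/2\ge 0$), and then let $\varepsilon\downarrow 0$ by monotone convergence. This, together with the density reduction to smooth radial functions guaranteeing that all boundary terms at infinity vanish, is the part requiring care; everything else is the clean weighted Cauchy–Schwarz estimate described above.
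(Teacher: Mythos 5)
Your proof is correct. The paper itself gives no proof of this lemma (it simply refers to \cite{BMM}), and your argument --- fundamental theorem of calculus applied to $|u|^{(p+2)/2}$ integrated from $r$ to $\infty$, weighted Cauchy--Schwarz via the splitting $|u|^{m-1}|u'| = \bigl(|u|^{m-1}s^{-1/2}\bigr)\bigl(|u'|\,s^{1/2}\bigr)$, and the elementary bound $s^{-1}\leq r^{-2}s$ for $s\geq r$ --- is exactly the standard Strauss-type radial-lemma proof that the citation refers to, with the regularization for $1\leq p<2$ and the density reduction handled correctly.
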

\begin{rem}
In the general case,  the embedding of $H^1 (\R^2)$ into $L^p
(\R^2)$ is not compact as it is shown for instance by the example: $u_n(x)=\varphi(x+x_n)$ with
$\varphi$ a function  belonging  to ${\mathcal D}(\R^2)$ and $(x_n)$ a sequence of $\R^2$ satisfying  $|x_n|\to\infty$. However,  in the radial setting,
the following compactness result holds (see for example \cite{BL, Kavian, Strauss}):
\end{rem}
\begin{lem}
\label{compacity} Let $2< p<\infty$. The embedding of $H^1_{rad}(\R^2)$ into
$L^p(\R^2)$ is compact.
\end{lem}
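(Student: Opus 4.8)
The plan is to establish sequential compactness directly: starting from a bounded sequence $(u_n)$ in $H^1_{rad}(\R^2)$, with say $\|u_n\|_{H^1}\le M$, I will extract a subsequence that converges strongly in $L^p(\R^2)$. First I would use reflexivity of $H^1$ to pass to a subsequence converging weakly in $H^1(\R^2)$ to a limit $u$; since $H^1_{rad}(\R^2)$ is a closed subspace it is weakly closed, so $u$ is again radial and satisfies $\|u\|_{H^1}\le M$. The continuous embedding $H^1_{rad}(\R^2)\hookrightarrow L^p(\R^2)$ (valid for $2\le p<\infty$) guarantees the target space is the right one; the whole game is then to upgrade weak convergence to strong $L^p$ convergence, which I will do by splitting $\R^2$ into a ball and its complement.

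The radial structure enters decisively in controlling the exterior tails. Using $p>2$ and bounding the factor $|u_n|^{p-2}$ by its supremum over the exterior region,
$$
\int_{|x|>R} |u_n|^p \, dx = \int_{|x|>R} |u_n|^{p-2}\,|u_n|^2\,dx \le \Big(\sup_{|x|\geq R}|u_n(x)|\Big)^{p-2}\,\|u_n\|_{L^2}^2 \, .
$$
Inserting the pointwise radial decay $|u_n(x)|\lec r^{-1/2}\,\|u_n\|_{L^2}^{1/2}\|\na u_n\|_{L^2}^{1/2}\lec r^{-1/2}M$ furnished by estimate \eqref{Bound} of Lemma \ref{apa3}, I obtain
$$
\int_{|x|>R} |u_n|^p \, dx \lec R^{-(p-2)/2}\,M^p \, .
$$
Because $p>2$, the right-hand side tends to $0$ as $R\to\infty$ uniformly in $n$, and the same bound holds for $u$. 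Hence, given $\eps>0$, I can fix $R$ so large that $\|u_n-u\|_{L^p(\{|x|>R\})}<\eps$ for every $n$.

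On the fixed ball $B_R=\{|x|\le R\}$ I would invoke the Rellich--Kondrachov theorem: in dimension two $H^1(B_R)$ embeds compactly into $L^p(B_R)$ for every $p<\infty$, so along the subsequence $u_n\to u$ strongly in $L^p(B_R)$ and $\|u_n-u\|_{L^p(B_R)}<\eps$ for $n$ large. Combining the interior and exterior estimates yields $\|u_n-u\|_{L^p(\R^2)}\lec\eps$ for $n$ large, giving strong $L^p$ convergence and thus the desired compactness. The only genuinely delicate ingredient is the \emph{uniform} smallness of the tails, and this is exactly where radiality is used through \eqref{Bound}; it also transparently explains the exclusion of the endpoint $p=2$, since the tail exponent $R^{-(p-2)/2}$ degenerates there, in agreement with the well-known failure of compactness of $H^1_{rad}(\R^2)\hookrightarrow L^2(\R^2)$.
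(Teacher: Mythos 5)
Your proof is correct. The paper itself gives no proof of Lemma \ref{compacity}, simply citing \cite{BL, Kavian, Strauss}; your argument --- weak-limit extraction, uniform tail control via the radial decay estimate \eqref{Bound} of Lemma \ref{apa3}, and Rellich--Kondrachov on a fixed ball --- is precisely the classical Strauss-type proof found in those references, so it matches the intended approach.
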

\noindent For our subject, it will be useful to point out  the following refined estimate:
\begin{lem}
\label{refest} There is a positive constant $C$ such that
\begin{equation}
\label{apa4}  \|u\|_{L^4}\leq C \|\nabla u\|^{\frac 1 4}_{L^2}\|u\|^{\frac 3 4}_{H^1},
\end{equation}
for any $u \in H^1 (\R^2)$.
\end{lem}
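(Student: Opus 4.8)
The plan is to deduce the refined bound \eqref{apa4} from the classical Gagliardo--Nirenberg inequality in dimension two by a simple interpolation that trades part of the homogeneous gradient norm for the full $H^1$ norm. First I would recall the standard two-dimensional estimate
\[
\|u\|_{L^4}^2 \leq C\,\|u\|_{L^2}\,\|\nabla u\|_{L^2},
\]
valid for every $u\in H^1(\R^2)$; equivalently $\|u\|_{L^4}\leq C\,\|u\|_{L^2}^{1/2}\,\|\nabla u\|_{L^2}^{1/2}$. This is the sharp scale-invariant homogeneous bound for the $L^4$ norm, and it is the only nontrivial input; it may be quoted as classical.

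The heart of the argument is then to split the gradient factor and absorb the low-frequency part into the inhomogeneous norm. Writing $\|\nabla u\|_{L^2}^{1/2}=\|\nabla u\|_{L^2}^{1/4}\,\|\nabla u\|_{L^2}^{1/4}$ and using the trivial bounds $\|\nabla u\|_{L^2}\leq\|u\|_{H^1}$ and $\|u\|_{L^2}\leq\|u\|_{H^1}$, I obtain
\[
\|u\|_{L^4}\leq C\,\|u\|_{L^2}^{1/2}\,\|\nabla u\|_{L^2}^{1/4}\,\|\nabla u\|_{L^2}^{1/4}\leq C\,\|u\|_{H^1}^{1/2}\,\|\nabla u\|_{L^2}^{1/4}\,\|u\|_{H^1}^{1/4},
\]
which, after collecting the two $H^1$ powers ($\tfrac12+\tfrac14=\tfrac34$), is exactly $\|u\|_{L^4}\leq C\,\|\nabla u\|_{L^2}^{1/4}\,\|u\|_{H^1}^{3/4}$, the desired inequality \eqref{apa4}.

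The exponents are forced: keeping $\|\nabla u\|_{L^2}^{1/4}$ as the homogeneous gain and balancing the remaining $3/4$ power through the $H^1$ norm is the only distribution compatible with the $\tfrac12+\tfrac12$ split produced by Gagliardo--Nirenberg. I expect no genuine obstacle, since beyond the classical inequality the computation is mere bookkeeping of H\"older-type exponents. An equally short alternative is to interpolate geometrically between Gagliardo--Nirenberg (with weight $\tfrac12$) and the Sobolev embedding $H^1(\R^2)\hookrightarrow L^4(\R^2)$, namely $\|u\|_{L^4}\leq C\|u\|_{H^1}$ (with weight $\tfrac12$): multiplying the square roots of the two bounds and again using $\|u\|_{L^2}\leq\|u\|_{H^1}$ reproduces \eqref{apa4}.
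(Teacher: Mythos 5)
Your proof is correct, but it takes a genuinely different route from the paper's. You quote the classical two-dimensional Gagliardo--Nirenberg (Ladyzhenskaya) inequality $\|u\|_{L^4}^2\le C\|u\|_{L^2}\|\nabla u\|_{L^2}$ and then simply majorize $\|u\|_{L^2}^{1/2}\le\|u\|_{H^1}^{1/2}$ and one factor $\|\nabla u\|_{L^2}^{1/4}\le\|u\|_{H^1}^{1/4}$; in effect you observe that \eqref{apa4} is weaker than Ladyzhenskaya's inequality, hence an immediate corollary of it, and your exponent bookkeeping ($\tfrac12+\tfrac14=\tfrac34$) is exact. The paper instead gives a self-contained Fourier-analytic argument: by the Hausdorff--Young bound $\mathcal{F}:L^{4/3}(\R^2)\to L^4(\R^2)$ it suffices to estimate $\|\widehat{u}\|_{L^{4/3}}$, which is done by inserting the weight $|\xi|^{1/4}\langle\xi\rangle^{3/4}$ and its inverse, applying H\"older twice (using that $\big(|\xi|\langle\xi\rangle^{3}\big)^{-1}$ is integrable on $\R^2$), and concluding with the Plancherel formula. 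Your route is shorter, at the price of invoking Gagliardo--Nirenberg as a black box; the paper's route needs only Hausdorff--Young and Plancherel, and its weight splitting directly encodes the target exponents $\tfrac14$ (homogeneous) and $\tfrac34$ (inhomogeneous), making the mechanism of the inequality visible and adaptable to other exponent distributions. Your second variant, geometric interpolation between Gagliardo--Nirenberg and the Sobolev embedding $H^1(\R^2)\hookrightarrow L^4(\R^2)$, is equally valid.
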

 \begin{proof}
In view of the continuity of the Fourier transform
$${\mathcal F} : L^{\frac 4 3 } (\R^2)\longrightarrow L^4(\R^2),$$
it suffices to prove that
$$ \|\widehat{u}\|_{L^{\frac 4 3 }(\R^2)}\leq C \|\nabla u\|^{\frac 1 4}_{L^2(\R^2)}\|u\|^{\frac 3 4}_{H^1(\R^2)},$$
where $\widehat{u}$ denotes the Fourier transform of $u$. \\

To go to this end, let us begin by observing that  \footnote{We used the classical notation  $\langle \xi \rangle = (1+|\xi|^2)^{\frac 1 2} $.}
\begin{eqnarray*}
 \int_{\R^2} \big|\widehat{u}(\xi)\big|^{\frac 4 3}  d\xi &=& \int_{\R^2} \big|\widehat{u}(\xi) |\xi|^{\frac 1 4} \langle \xi \rangle^{\frac 3 4} \big|^{\frac 4 3}  \big(|\xi|^{\frac 1 4} \langle \xi \rangle^{\frac 3 4}  \big)^{-\frac 4 3} d\xi   \\ &\leq & \Big(\int_{\R^2} \big|\widehat{u}(\xi)\big|^{2} |\xi|^{\frac 1 2} \langle \xi \rangle^{\frac 3 2}   \ d\xi \Big)^{\frac 2 3}\Big(\int_{\R^2} \frac {d\xi} { |\xi| \langle \xi \rangle^{3}} \Big)^{\frac 1 3}  \\ &\lesssim & \Big(\int_{\R^2} \big|\widehat{u}(\xi) |\xi|\big|^{\frac 1 2} \big|\widehat{u}(\xi) \langle \xi \rangle \big|^{\frac 3 2}    \ d\xi \Big)^{\frac 2 3}.\end{eqnarray*}
 Now applying H\"older inequality, we deduce that $$ \int_{\R^2} \big|\widehat{u}(\xi)\big|^{\frac 4 3}  d\xi \lesssim \Big(\int_{\R^2} |\xi|^{2}\big|\widehat{u}(\xi)\big|^{2}  d\xi \Big)^{\frac 1 6}\Big(\int_{\R^2}  \langle \xi \rangle^{2}\big|\widehat{u}(\xi)\big|^{2}  d\xi \Big)^{\frac 1 2},$$
 which thanks to Fourier-Plancherel formula gives rise  to
 $$ \|\widehat{u}\|_{L^{\frac 4 3 }(\R^2)}\lesssim \|\nabla u\|^{\frac 1 4}_{L^2(\R^2)}\|u\|^{\frac 3 4}_{H^1(\R^2)}.$$
 This ends the proof of the lemma.
 \end{proof}

  \medbreak

 Furthermore  \begin{equation}
\label{2Dembed}H^1(\R^2)\hookrightarrow {\mathcal L},\end{equation}
where ${\mathcal L}$ denotes the  Orlicz space $L^\phi$ associated to the function   $\phi= {\rm e}^{ s^2}-1$  (see Definition \ref{deforl} below).
This embedding stems immediately from the
following sharp Moser-Trudinger type inequalities (see \cite{AT, M, Ruf, Tru}):
\begin{prop}\label{proptm}
\begin{equation}
\label{Mos2} \sup_{\|u\|_{H^1}\leq 1}\;\;\int_{\R^2}\,\left({\rm
e}^{4\pi |u(x)|^2}-1\right)\,dx:=\kappa<\infty,
\end{equation}
\end{prop}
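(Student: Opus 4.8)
\emph{Proof proposal.} This is the sharp Moser--Trudinger inequality on the whole plane at the critical constant $4\pi$, and the plan is to prove it along the lines of \cite{M, Ruf}, exploiting the radial tools already at our disposal. First I would reduce to radial, nonincreasing profiles. By the P\'olya--Szeg\H o inequality the Schwarz symmetrization $u^*$ of $u$ satisfies $\|\na u^*\|_{L^2}\leq\|\na u\|_{L^2}$ and $\|u^*\|_{L^2}=\|u\|_{L^2}$, hence $\|u^*\|_{H^1}\leq\|u\|_{H^1}\leq 1$, while the integral is unchanged, $\int_{\R^2}({\rm e}^{4\pi|u|^2}-1)\,dx=\int_{\R^2}({\rm e}^{4\pi|u^*|^2}-1)\,dx$, since rearrangement preserves the distribution function of $|u|$. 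It therefore suffices to bound the integral for $u$ radial and nonincreasing in $r=|x|$, and I split $\R^2=B_1\cup(\R^2\setminus B_1)$.

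On the exterior region the radial decay estimate \eqref{Bound} of Lemma \ref{apa3} gives $|u(x)|\leq C r^{-1/2}\|u\|_{L^2}^{1/2}\|\na u\|_{L^2}^{1/2}\leq C$ for $r\geq 1$, so $u$ is uniformly bounded there. Since ${\rm e}^{4\pi s^2}-1\lesssim s^2$ whenever $|s|\leq C$, this yields $\int_{\R^2\setminus B_1}({\rm e}^{4\pi|u|^2}-1)\,dx\lesssim\int_{\R^2\setminus B_1}|u|^2\,dx\leq\|u\|_{L^2}^2\leq 1$. This is precisely the point where the full $H^1$-norm, rather than only the Dirichlet energy, and the radial setting enter, and it is what turns Moser's bounded-domain inequality into its whole-space counterpart.

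On $B_1$ I would carry out Moser's logarithmic change of variables. Writing $u(r)=v(t)$ with $r={\rm e}^{-t/2}$ and $t\in[0,\infty)$, a direct computation gives
$$\int_{B_1}|\na u|^2\,dx=4\pi\int_0^\infty|v'(t)|^2\,dt,\qquad \int_{B_1}\big({\rm e}^{4\pi|u|^2}-1\big)\,dx=\pi\int_0^\infty\big({\rm e}^{4\pi v(t)^2}-1\big){\rm e}^{-t}\,dt.$$
Setting $w=\sqrt{4\pi}\,v$, so that $\int_0^\infty|w'|^2\,dt\leq\|\na u\|_{L^2}^2\leq 1$, the matter reduces to the one-dimensional bound $\int_0^\infty {\rm e}^{w(t)^2-t}\,dt\leq C$, where the boundary value $w(0)=\sqrt{4\pi}\,u(1)$ is controlled by \eqref{Bound} applied at $r=1$.

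The heart of the argument, and the step I expect to be the main obstacle, is exactly this estimate at the critical constant. A naive Cauchy--Schwarz bound only gives $w(t)^2\leq(|w(0)|+\sqrt t\,)^2=t+O(\sqrt t\,)$, so ${\rm e}^{w^2-t}$ stays bounded but fails to be integrable on $(0,\infty)$; the criticality of the exponent $4\pi$ is precisely what makes this borderline. The remedy is Moser's lemma \cite{M}: for $w$ absolutely continuous with $\int_0^\infty|w'|^2\,dt\leq 1$ and $|w(0)|$ bounded, a distribution-function analysis of the super-level sets of $t\mapsto w(t)^2-t$ yields $\int_0^\infty {\rm e}^{w(t)^2-t}\,dt\leq C$ with $C$ independent of $u$. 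The only care required is to absorb the nonzero but bounded boundary value $w(0)$ without degrading the sharp constant, which is harmless as it contributes only a fixed multiplicative factor. Combining the interior and exterior bounds gives $\kappa<\infty$.
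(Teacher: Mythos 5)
There is a genuine gap, and it sits exactly at the step you yourself call the heart of the argument. After symmetrization and the change of variables $r={\rm e}^{-t/2}$, the only hypotheses you retain on $B_1$ are $\int_0^\infty |w'(t)|^2\,dt\le\|\na u\|_{L^2}^2\le 1$ and $|w(0)|\le C$. The lemma you then invoke --- that these two facts alone give $\int_0^\infty {\rm e}^{w(t)^2-t}\,dt\le C$ --- is false. Indeed, fix any $c_0>0$ compatible with your boundary bound and, for large $T$, take
\[
w_T(t)=c_0+\frac{t}{\sqrt T}\quad (0\le t\le T),\qquad w_T(t)=c_0+\sqrt T\quad (t\ge T).
\]
Then $w_T(0)=c_0$ and $\int_0^\infty|w_T'(t)|^2\,dt=1$, but
\[
\int_0^\infty {\rm e}^{w_T(t)^2-t}\,dt\ \ge\ \int_T^{T+1}{\rm e}^{(c_0+\sqrt T)^2-t}\,dt\ \ge\ {\rm e}^{c_0^2+2c_0\sqrt T-1}\ \stackrel{T\to\infty}\longrightarrow\ \infty.
\]
Moser's lemma genuinely requires $w(0)=0$ (i.e.\ Dirichlet boundary values); at the critical exponent a bounded nonzero boundary value is not ``a fixed multiplicative factor'', it destroys the estimate. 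This does not contradict Proposition \ref{proptm}: the radial nonincreasing $u_T$ on $B_1$ corresponding to $w_T$ has $u_T(1)=c_0/\sqrt{4\pi}$, hence $\|u_T\|_{L^2(B_1)}^2\ge\pi u_T(1)^2=c_0^2/4$, so $\|u_T\|_{H^1}^2\ge 1+c_0^2/4>1$. That surplus of $L^2$ mass is precisely the information your reduction throws away.

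Any repair must keep the coupling coming from the full $H^1$ constraint --- this is Ruf's theorem \cite{Ruf}, and it is worth noting that the paper does not prove Proposition \ref{proptm} at all: it quotes it from \cite{AT, M, Ruf, Tru}, while its own appendix argument (the proof of Proposition \ref{usconsbis}, a decoupled statement of the same flavour as yours) only reaches the subcritical exponents $\alpha<4\pi$. A correct critical argument goes as follows: set $m=\|u\|_{L^2}^2$, so that $\|\na u\|_{L^2(B_1)}^2\le 1-m$, while monotonicity gives $\pi u(1)^2\le\|u\|_{L^2(B_1)}^2\le m$. Put $v=u-u(1)\in H^1_0(B_1)$ and write $u^2\le(1+\eps)v^2+(1+\eps^{-1})u(1)^2$ with the $u$-dependent choice $\eps=m/(1-m)$: then $(1+\eps)\|\na v\|_{L^2}^2\le 1$, so Moser's inequality on the ball with zero boundary data applies to $\sqrt{1+\eps}\,v$ at the sharp constant, while the boundary contribution is $\exp\big(4\pi u(1)^2/m\big)\le {\rm e}^{4}$ because $m$ cancels against $u(1)^2\le m/\pi$. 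This cancellation is unavailable once the gradient bound and the $L^2$ bound have been decoupled, which is consistent with the paper's remark that \eqref{2D-embed} fails if $\|u\|_{H^1}$ is replaced by $\|\na u\|_{L^2}$: your reduced hypotheses simply cannot see the critical constant $4\pi$.
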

\noindent and states as follows
\begin{equation}
\label{2D-embed} \|u\|_{{\mathcal
L}}\leq\frac{1}{\sqrt{4\pi}}\|u\|_{H^1},\end{equation}
\noindent
when the Orlicz space  ${\mathcal L}$ is endowed with the
norm $\|\cdot\|_{\mathcal L}$ where the number $1$ in Definition \ref{deforl}  is replaced by
the above constant $\kappa$.\\

 \medskip

 In this article  we are rather interested in the  Sobolev embedding
 \begin{equation}
\label{lackor2}
H^1(\R^2)\hookrightarrow\wt{\mathcal L},
\end{equation}  where $\wt {\mathcal L}$ is the  Orlicz space $L^\phi$ associated to the function   $\phi= {\rm e}^{ s^2}-1 - s^2$, and
which arises naturally in the study of the nonlinear Schr\"odinger equation  with
exponential growth \eqref{NLS}-\eqref{def f}.  It is obvious that
\begin{equation}
\label{2D-embedbis} \|u\|_{\wt {\mathcal
L}}\leq\frac{1}{\sqrt{4\pi}}\|u\|_{H^1},\end{equation}
 where  $\|\cdot \|_{\wt {\mathcal
L}}$ is the Orlicz norm introduced in  Definition \ref{deforl}, with the constant $\kappa$ appearing  in Identity  \eqref{Mos2} instead of the number $1$. Besides, as it can be shown by the  example by Moser  $
f_{\alpha_n}$ given by  \eqref{Mos-LIons}, the Sobolev constant appearing in \eqref{2D-embedbis} is optimal.
\\

For our purpose,  we shall resort to  the following  Moser-Trudinger type inequalities  and the  resulting corollaries that  will be demonstrated  in Appendix \ref{usefullMT}:
\begin{prop}
\label{usconsbis} Let $\alpha\in [0,4\pi [$ and $p$ be a nonnegative real larger than $2$.
A constant $C(\alpha,p)$ exists
such that
\begin{equation}
\label{Mosbis2} \int_{\R^2}\,{\rm e}^{\alpha
|u(x)|^2}\,|u(x)|^p\, dx\leq C(\alpha,p) \Int_{\R^2}\,|u(x)|^p\, dx,
\end{equation}
for all $u$ in $H^1(\R^2)$ satisfying $\|\nabla
u\|_{L^2(\R^2)}\leq1$.
\end{prop}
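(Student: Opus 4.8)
The plan is to reduce the statement to an estimate on $e^{\alpha|u|^2}-1$ and then to expand this quantity in a power series whose terms are controlled by a sharp Gagliardo--Nirenberg inequality. First I would write $e^{\alpha|u|^2}=1+\bigl(e^{\alpha|u|^2}-1\bigr)$, so that
\[
\Int_{\R^2} e^{\alpha|u|^2}|u|^p\,dx=\Int_{\R^2}|u|^p\,dx+\Int_{\R^2}\bigl(e^{\alpha|u|^2}-1\bigr)|u|^p\,dx .
\]
The first term is exactly the right-hand side of \eqref{Mosbis2} with constant $1$, so it remains to bound the second term by $C(\alpha,p)\int_{\R^2}|u|^p\,dx$. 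Since $e^{\alpha s^2}-1=\sum_{k\ge1}\frac{\alpha^k}{k!}s^{2k}$ has nonnegative terms, monotone convergence gives
\[
\Int_{\R^2}\bigl(e^{\alpha|u|^2}-1\bigr)|u|^p\,dx=\sum_{k\ge1}\frac{\alpha^k}{k!}\Int_{\R^2}|u|^{2k+p}\,dx .
\]

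Next I would estimate each integral by the two-dimensional Gagliardo--Nirenberg inequality $\|u\|_{L^{r}}\lec C_r\|u\|_{L^p}^{p/r}\|\na u\|_{L^2}^{(r-p)/r}$ applied with $r=2k+p$, which under the normalisation $\|\na u\|_{L^2}\le1$ reads $\int_{\R^2}|u|^{2k+p}\,dx\le C_{2k+p}^{2k+p}\int_{\R^2}|u|^{p}\,dx$. Inserting this bound yields
\[
\Int_{\R^2}\bigl(e^{\alpha|u|^2}-1\bigr)|u|^p\,dx\le\Bigl(\sum_{k\ge1}\frac{\alpha^k}{k!}C_{2k+p}^{2k+p}\Bigr)\Int_{\R^2}|u|^{p}\,dx ,
\]
so that the whole matter is reduced to proving that the numerical series $\sum_{k\ge1}\frac{\alpha^k}{k!}C_{2k+p}^{2k+p}$ converges whenever $\alpha<4\pi$; its finite value then provides the constant $C(\alpha,p)$.

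I expect the main obstacle to be precisely this last convergence, and it is where the threshold $4\pi$ --- the sharp Moser--Trudinger exponent of Proposition \ref{proptm} --- enters decisively. What is needed is the sharp asymptotic growth of the optimal Gagliardo--Nirenberg constants, namely that $\limsup_{k\to\infty}\bigl(C_{2k+p}^{2k+p}/k!\bigr)^{1/k}=(4\pi)^{-1}$, which forces the series to behave like $\sum_k(\alpha/4\pi)^k$ up to subexponential factors, hence to converge geometrically for $\alpha<4\pi$ while diverging at $\alpha=4\pi$. This is the analytic heart of the statement, and it reflects the fact that the extremising Moser sequences saturate the Gagliardo--Nirenberg inequalities exactly at the critical exponent $4\pi$, which is why the hypothesis $\alpha<4\pi$ cannot be relaxed.

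An alternative, slightly softer route avoids the precise constant asymptotics by splitting $\R^2$ into $\{|u|\le M\}$ and $\{|u|>M\}$. On $\{|u|\le M\}$ one bounds $e^{\alpha|u|^2}\le e^{\alpha M^2}$ directly, producing $e^{\alpha M^2}\int_{\R^2}|u|^p\,dx$. On $\{|u|>M\}$, choosing $\beta\in(\alpha,4\pi)$ and $M$ large enough that $e^{\alpha t^2}t^p\le e^{\beta t^2}$ for $t\ge M$, one reduces to controlling $\int_{\{|u|>M\}}e^{\beta|u|^2}\,dx$; applying the subcritical inequality $\int_{\R^2}\bigl(e^{\beta|w|^2}-1\bigr)\,dx\lec\|w\|_{L^2}^2$ (valid for $\beta<4\pi$ and $\|\na w\|_{L^2}\le1$, and itself a consequence of Proposition \ref{proptm}) to the truncation $w=(|u|-M)_+\,\mathrm{sign}\,u$, together with the elementary bounds $|\{|u|>M\}|\le M^{-p}\int_{\R^2}|u|^p\,dx$ and $\|w\|_{L^2}^2\le M^{2-p}\int_{\R^2}|u|^p\,dx$ (using $p>2$), closes the estimate. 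I would present the series approach as the more transparent of the two, the convergence for $\alpha<4\pi$ remaining the one genuinely delicate point in either case.
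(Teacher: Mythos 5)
Your primary route (power-series expansion controlled by Gagliardo--Nirenberg inequalities) has a genuine gap, and it sits exactly where you locate it: the convergence of $\sum_{k\ge1}\frac{\alpha^k}{k!}C_{2k+p}^{2k+p}$ for \emph{every} $\alpha<4\pi$. What elementary proofs of the two-dimensional Gagliardo--Nirenberg inequality give is $C_r\le C\sqrt r$ with a non-sharp $C$; feeding this into your series yields $C_{2k+p}^{2k+p}/k!\lesssim (2C^2e)^k$ up to polynomial factors, hence convergence only for $\alpha<1/(2C^2e)$, not up to $4\pi$. To reach the full range you need precisely the sharp asymptotics $\limsup_{k\to\infty}\bigl(C_{2k+p}^{2k+p}/k!\bigr)^{1/k}=(4\pi)^{-1}$, i.e.\ the prefactor $(8\pi e)^{-1/2}$ in $C_r\sim (r/(8\pi e))^{1/2}$, which you assert but do not prove. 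This is not an off-the-shelf elementary fact: in the literature such sharp growth estimates for the optimal Gagliardo--Nirenberg constants are themselves usually extracted \emph{from} Moser--Trudinger-type inequalities like \eqref{Mosbis2}, so invoking the asymptotics here without an independent proof is circular. The series reduction is algebraically correct, but it relocates the entire difficulty of Proposition \ref{usconsbis} into an unproved claim that is at least as hard as the proposition itself.

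Your alternative truncation route is the one that can be completed, and it is genuinely different from the paper's argument, but one step fails as written: on $\{|u|>M\}$ you have $|u|=w+M$ with $w=(|u|-M)_+$, so that $e^{\beta|u|^2}=e^{\beta w^2}\,e^{2\beta Mw}\,e^{\beta M^2}$, and the cross factor $e^{2\beta Mw}$ is unbounded; you therefore cannot bound $\int_{\{|u|>M\}}e^{\beta|u|^2}\,dx$ by applying the subcritical inequality to $w$ with the \emph{same} exponent $\beta$. The repair is standard: $2Mw\le \varepsilon w^2+M^2/\varepsilon$ gives $e^{\beta|u|^2}\le e^{\beta(1+\varepsilon)w^2}\,e^{\beta(1+1/\varepsilon)M^2}$, and since you may take $\beta$ close to $\alpha$ and $\varepsilon$ small so that $\beta(1+\varepsilon)<4\pi$, the subcritical inequality applies to $w$ (note $\|\nabla w\|_{L^2}\le\|\nabla u\|_{L^2}\le1$ by the diamagnetic inequality). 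Your remark that this subcritical inequality follows from Proposition \ref{proptm} is correct, via the scaling $u\mapsto u(\sigma\,\cdot)$, which preserves $\|\nabla u\|_{L^2}$ and scales $\|u\|_{L^2}^2$ and $\int(e^{\beta u^2}-1)\,dx$ identically, so one may normalize $\|u\|_{L^2}$ small enough that $\beta\,\|u\|_{H^1}^2\le 4\pi$. For comparison, the paper's own proof is self-contained and bypasses Proposition \ref{proptm} entirely: after Schwarz symmetrization (Proposition \ref{norminvar}) it performs Moser's change of variables $w(t)=\sqrt{4\pi}\,u({\rm e}^{-t/2})$, reducing \eqref{Mosbis2} to a one-dimensional weighted estimate proved by splitting the line at $T_0=\sup\{t:\ w(t)\le1\}$ and using $w(t)\le 1+(t-T_0)^{1/2}$; your truncation at height $M$ plays the role of that splitting, but in physical space and at the price of quoting the critical inequality \eqref{Mos2}.
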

A byproduct of  Proposition \ref{usconsbis} is the following useful result.
\begin{prop} \label{mostrud} Let $\alpha\in [0,4\pi [$. There is a  constant $c_\alpha$
such that
\begin{equation}
\label{Mosbis} \int_{\R^2}\,\left({\rm e}^{\alpha
|u(x)|^2}-1- \alpha
|u(x)|^2\right)\,dx\leq c_\alpha \|u\|_{L^4}^4,
\end{equation}
for all $u$ in $H^1(\R^2)$ satisfying $\|\nabla
u\|_{L^2(\R^2)}\leq1$.  \end{prop}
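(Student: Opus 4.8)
The plan is to deduce the estimate from Proposition \ref{usconsbis} applied with the single exponent $p=4$, after trading the exponential defect $e^{\alpha|u|^2}-1-\alpha|u|^2$ against the weighted quantity $|u|^4\,e^{\alpha|u|^2}$ by means of a purely elementary pointwise inequality. The first step is to record that, for every real $s\geq 0$,
$$
e^{\alpha s}-1-\alpha s\;\leq\;\frac{\alpha^2}{2}\,s^2\,e^{\alpha s}.
$$
This is seen by comparing Taylor coefficients term by term: writing
$$
e^{\alpha s}-1-\alpha s=\sum_{k\geq 2}\frac{(\alpha s)^k}{k!}
\qquad\text{and}\qquad
\frac{\alpha^2 s^2}{2}\,e^{\alpha s}=\sum_{k\geq 2}\frac{(\alpha s)^k}{2\,(k-2)!},
$$
the claimed bound follows at once from the inequality $k!\geq 2\,(k-2)!$, valid for all $k\geq 2$ since it reduces to $k(k-1)\geq 2$.

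Next I would substitute $s=|u(x)|^2$ and integrate the pointwise estimate over $\R^2$, which gives
$$
\int_{\R^2}\Big(e^{\alpha|u(x)|^2}-1-\alpha|u(x)|^2\Big)\,dx
\;\leq\;\frac{\alpha^2}{2}\int_{\R^2}e^{\alpha|u(x)|^2}\,|u(x)|^4\,dx.
$$
Since $\alpha\in[0,4\pi[$ and $p=4$ is a real larger than $2$, Proposition \ref{usconsbis} applies to the right-hand side under the standing hypothesis $\|\nabla u\|_{L^2(\R^2)}\leq 1$ and yields
$$
\int_{\R^2}e^{\alpha|u(x)|^2}\,|u(x)|^4\,dx
\;\leq\; C(\alpha,4)\int_{\R^2}|u(x)|^4\,dx
= C(\alpha,4)\,\|u\|_{L^4}^4 .
$$
Chaining the two displays delivers \eqref{Mosbis} with $c_\alpha:=\frac{\alpha^2}{2}\,C(\alpha,4)$.

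There is essentially no serious obstacle in this argument, as it is genuinely a corollary of Proposition \ref{usconsbis}; the only point requiring a moment's attention is the design of the dominating function $\tfrac{\alpha^2}{2}\,s^2 e^{\alpha s}$. It is tuned precisely so that, once $s=|u|^2$ is inserted, the surviving polynomial weight is exactly $|u|^4$, which both matches the right-hand side $\|u\|_{L^4}^4$ and keeps us inside the admissible range $p>2$ of Proposition \ref{usconsbis}. Any coarser majorant (for instance one leaving a weight $|u|^2$) would fall outside that range and fail to produce the critical $L^4$ control, so this calibration is the genuine content of the reduction.
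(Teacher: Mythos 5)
Your proof is correct and follows essentially the same route as the paper: the paper likewise reduces \eqref{Mosbis} to Proposition \ref{usconsbis} with $p=4$ via the pointwise bound ${\rm e}^{\alpha|u|^2}-1-\alpha|u|^2\lesssim {\rm e}^{\alpha|u|^2}|u|^4$. Your only addition is making that elementary inequality explicit (with constant $\alpha^2/2$ via the Taylor-coefficient comparison), which the paper leaves implicit.
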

From \eqref{Mosbis}, it is easy to deduce the following consequence.
\begin{cor}
\label{cormoser} Let $(u_n)$ be a bounded sequence in
$H^1(\R^2)$ such that
$$
 \|u_n\|_{L^4}\stackrel{n\to\infty}\longrightarrow 0,
$$
then
\begin{equation}
\label{condL4}
\|u_n\|_{\wt {\mathcal
L}}\leq\frac{1}{\sqrt{4\pi}}\|\nabla u_n\|_{L^2}+\circ(1),\quad n\to\infty.
\end{equation}
\end{cor}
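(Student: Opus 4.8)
The plan is to work directly from the Luxemburg norm. Recall that, with the threshold $\kappa$ of \eqref{Mos2} in place of $1$, one has $\|u_n\|_{\wt{\mathcal L}}\le\lambda$ as soon as $\int_{\R^2}\big({\rm e}^{|u_n|^2/\lambda^2}-1-|u_n|^2/\lambda^2\big)\,dx\le\kappa$, because $\lambda\mapsto\int_{\R^2}\phi(|u_n|/\lambda)\,dx$ is non-increasing. Writing $a_n:=\|\nabla u_n\|_{L^2}$ and fixing $\varepsilon>0$, it therefore suffices to show that $\int_{\R^2}\phi(u_n/\lambda_n)\,dx\le\kappa$ for all large $n$, where $\lambda_n:=\frac{1}{\sqrt{4\pi}}a_n+\varepsilon$; this gives $\limsup_n\big(\|u_n\|_{\wt{\mathcal L}}-\frac{1}{\sqrt{4\pi}}a_n\big)\le\varepsilon$, and letting $\varepsilon\to0$ yields \eqref{condL4}. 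The only input is Proposition \ref{mostrud}, and everything rests on two complementary ways of applying it.

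The first, which produces the sharp constant, is to normalize to unit gradient: with $v_n:=u_n/a_n$ one has $\|\nabla v_n\|_{L^2}=1$ and $\phi(u_n/\lambda_n)={\rm e}^{\alpha_n|v_n|^2}-1-\alpha_n|v_n|^2$, where $\alpha_n:=a_n^2/\lambda_n^2=4\pi\big(a_n/(a_n+\sqrt{4\pi}\,\varepsilon)\big)^2$. The decisive feature is that $\alpha_n$ remains uniformly below $4\pi$: since $a_n\le M$ (the $H^1$-bound), $\alpha_n\le\alpha_\ast:=4\pi\big(M/(M+\sqrt{4\pi}\,\varepsilon)\big)^2<4\pi$. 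As the integrand in \eqref{Mosbis} is non-decreasing in $\alpha$, its constant may be taken non-decreasing, so \eqref{Mosbis} applied to $v_n$ gives $\int_{\R^2}\phi(u_n/\lambda_n)\,dx\le c_{\alpha_\ast}\|u_n\|_{L^4}^4/a_n^4$. This is harmless provided $a_n$ is bounded away from $0$, since then it tends to $0$ together with $\|u_n\|_{L^4}$.

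The second way handles small gradients, where $\|u_n\|_{L^4}^4/a_n^4$ could blow up. When $\lambda_n\ge a_n$, the function $u_n/\lambda_n$ has gradient norm $\le1$, so \eqref{Mosbis} with the fixed exponent $\alpha=1<4\pi$ gives $\int_{\R^2}\phi(u_n/\lambda_n)\,dx\le c_1\|u_n\|_{L^4}^4/\lambda_n^4\le c_1\|u_n\|_{L^4}^4/\varepsilon^4$, which is $\le\kappa$ once $\|u_n\|_{L^4}$ is small enough. The main obstacle is exactly the tension between these two estimates: driving $\alpha_n\nearrow4\pi$ to reach the optimal factor $\frac{1}{\sqrt{4\pi}}$ degenerates the Moser--Trudinger constant, whereas the compensating factor $\|u_n\|_{L^4}^4/a_n^4$ explodes as $a_n\to0$. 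Keeping $\varepsilon$ fixed tames the first, and a separate treatment of small $a_n$ tames the second.

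To conclude, for fixed $\varepsilon$ I would split each large $n$ according to a threshold $B_\varepsilon$ chosen so that $a_n\le B_\varepsilon$ forces $\lambda_n\ge a_n$: if $a_n\le B_\varepsilon$ the second estimate applies, while if $a_n>B_\varepsilon$ the first applies with $\|u_n\|_{L^4}^4/a_n^4\le B_\varepsilon^{-4}\|u_n\|_{L^4}^4\to0$. In either alternative $\int_{\R^2}\phi(u_n/\lambda_n)\,dx\le\kappa$ for all large $n$, which completes the argument.
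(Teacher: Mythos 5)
Your proof is correct, and it rests on the same key input as the paper's — estimate \eqref{Mosbis} of Proposition \ref{mostrud}, applied with an exponent kept strictly below $4\pi$ and combined with $\|u_n\|_{L^4}\to 0$ — but the implementation is genuinely different. The paper's entire proof is the single normalization $v_n:=u_n/\bigl(\|\nabla u_n\|_{L^2}+\sqrt{\|u_n\|_{L^4}}\bigr)$, to which \eqref{Mosbis} is applied. That choice dissolves both difficulties you isolate at once: $\|\nabla v_n\|_{L^2}\le 1$ by construction, while the denominator being at least $\sqrt{\|u_n\|_{L^4}}$ forces $\|v_n\|_{L^4}^4\le\|u_n\|_{L^4}^2\to 0$ whether or not $\|\nabla u_n\|_{L^2}$ degenerates, so no case distinction on the size of the gradient is ever needed; the sharp constant $\frac{1}{\sqrt{4\pi}}$ is then recovered by letting $\alpha\nearrow 4\pi$ only at the very end, the perturbation $\sqrt{\|u_n\|_{L^4}}$ being $o(1)$ and hence absorbed in the error term. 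You instead regularize additively in the conclusion, taking $\lambda_n=\frac{1}{\sqrt{4\pi}}a_n+\varepsilon$, which freezes the Moser--Trudinger exponent at $\alpha_*(\varepsilon,M)<4\pi$ but costs you the two-case split according to the size of $a_n$ (your threshold forcing $\lambda_n\ge a_n$ works, as does the monotonicity-in-$\alpha$ argument used to pass from $\alpha_n$ to $\alpha_*$). What each buys: your version makes the tension between $\alpha\nearrow 4\pi$ and the blow-up of $\|u_n\|_{L^4}^4/a_n^4$ completely explicit and resolves it by hand; the paper's normalization is engineered precisely so that this tension never appears, at the price of looking opaque until one unwinds it.
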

\begin{rem}
Let us point out that  estimate \eqref{2D-embed}  (respectively \eqref{2D-embedbis}) fails  if we replace in the right hand side $\|u\|_{H^1}$ by $\|\nabla u\|_{L^2}$.  To be convinced, just consider  the sequence $(u_n)_{n \geq  0}$ defined by
 $u_n(x):=\frac{1}{n}\,{\rm e}^{-|\frac{x}{n}|^2}$ (respectively  $u_n(x):=\frac{1}{\sqrt{n}}\,{\rm e}^{-|\frac{x}{n}|^2}$). \end{rem}
Inequality \eqref{Mosbis2} fails for $\alpha = 4\pi $ as it can be shown by the example by Moser defined by \eqref{Mos-LIons}. However, the following estimate needed in the sequel occurs:
\begin{cor}
\label{uscons*} For any  $\delta> 0$, there exit   $c_\delta$  and $\varepsilon_0$ such that for all $0 < \varepsilon \leq \varepsilon_0$ and all nonnegative  real $p \geq 2$, there is a positive  constant $C(\delta,\varepsilon,p)$ such that for  $\ds  r = \frac 1 {1- \varepsilon \, c_\delta}$ the following estimate holds
\begin{equation}
\label{Mosbis3} \int_{\R^2}\, {\rm e}^{4\pi (1+\varepsilon)
|u(x)|^2}\, |u(x)|^p dx\leq C(\delta,\varepsilon,p) \Big( \|u\|^p_{L^p(\R^2)} + \|u\|^{p}_{L^{pr}(\R^2)} \Big),
\end{equation}
 for all $u$ in $H^1(\R^2)$ satisfying $\|\nabla
u\|_{L^2(\R^2)}\leq1$ and   $\|u\|_{\wt {\mathcal
L}} \leq \frac 1 {\sqrt{4\pi (1+ 2 \delta)}}\cdot$
\end{cor}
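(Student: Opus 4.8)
The plan is to interpolate, by means of Hölder's inequality, between the subcritical exponential bound of Proposition \ref{usconsbis} and the information carried by the Orlicz constraint. First I would reformulate the hypothesis $\|u\|_{\wt{\mathcal L}}\le \frac{1}{\sqrt{4\pi(1+2\delta)}}$: by the very definition of the Luxembourg norm (Definition \ref{deforl}, with the constant $\kappa$ of \eqref{Mos2} in place of $1$) together with the monotonicity of $\lambda\mapsto\int_{\R^2}({\rm e}^{|u|^2/\lambda^2}-1-|u|^2/\lambda^2)\,dx$, it implies
\begin{equation*}
\int_{\R^2}\Big({\rm e}^{4\pi(1+2\delta)|u(x)|^2}-1-4\pi(1+2\delta)|u(x)|^2\Big)\,dx\le\kappa.
\end{equation*}
Since for $|u|$ large the exponential dominates the subtracted polynomial, one has $1+4\pi(1+2\delta)|u|^2\le\frac12{\rm e}^{4\pi(1+2\delta)|u|^2}$ beyond a threshold $M_0=M_0(\delta)$, whence for every $M\ge M_0$,
\begin{equation*}
\int_{\{|u|>M\}}{\rm e}^{4\pi(1+2\delta)|u(x)|^2}\,dx\le 2\kappa.
\end{equation*}
The difficulty is that the exponent $4\pi(1+\varepsilon)$ is supercritical, so Proposition \ref{usconsbis} cannot be applied directly; the role of the above bound is precisely to absorb the concentrated part of $u$.

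Next I would split $\R^2=\{|u|\le M\}\cup\{|u|>M\}$ with $M\ge M_0$ fixed. On the sublevel set the exponential is bounded by ${\rm e}^{4\pi(1+\varepsilon)M^2}$, so
\begin{equation*}
\int_{\{|u|\le M\}}{\rm e}^{4\pi(1+\varepsilon)|u|^2}|u|^p\,dx\le {\rm e}^{4\pi(1+\varepsilon)M^2}\,\|u\|_{L^p(\R^2)}^p,
\end{equation*}
which produces the first term of the right-hand side. On the superlevel set I would write the exponent $4\pi(1+\varepsilon)$ as a Hölder combination of a strictly subcritical coefficient $4\pi\beta$, $\beta<1$, and the Orlicz coefficient $4\pi\gamma$ with $\gamma=1+2\delta$: with $r'$ conjugate to $r$ and the requirement $\frac{\beta}{r}+\frac{\gamma}{r'}=1+\varepsilon$, one factorises
\begin{equation*}
{\rm e}^{4\pi(1+\varepsilon)|u|^2}|u|^p=\Big({\rm e}^{4\pi\beta|u|^2}|u|^{pr}\Big)^{1/r}\Big({\rm e}^{4\pi\gamma|u|^2}\Big)^{1/r'}.
\end{equation*}
Applying Hölder's inequality with exponents $r,r'$ on $\{|u|>M\}$ and enlarging the first integral to $\R^2$ then gives
\begin{equation*}
\int_{\{|u|>M\}}{\rm e}^{4\pi(1+\varepsilon)|u|^2}|u|^p\,dx\le\Big(\int_{\R^2}{\rm e}^{4\pi\beta|u|^2}|u|^{pr}\,dx\Big)^{1/r}\Big(\int_{\{|u|>M\}}{\rm e}^{4\pi\gamma|u|^2}\,dx\Big)^{1/r'}.
\end{equation*}
Since $\beta<1$ and $pr>2$, Proposition \ref{usconsbis} (which applies because $\|\nabla u\|_{L^2}\le1$) bounds the first factor by $C(\beta,pr)^{1/r}\|u\|_{L^{pr}(\R^2)}^p$, while the reformulated Orlicz bound controls the second by $(2\kappa)^{1/r'}$; this produces the second term $\|u\|_{L^{pr}(\R^2)}^p$.

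Finally, the bookkeeping of the exponents is the \emph{heart of the matter}: I must realise the prescribed value $r=\frac{1}{1-\varepsilon c_\delta}$, with $c_\delta$ depending on $\delta$ alone, while keeping $\beta<1$. Setting $\gamma=1+2\delta$ and $\frac1{r'}=\varepsilon c_\delta$ in the relation $\frac{\beta}{r}+\frac{\gamma}{r'}=1+\varepsilon$ and solving for $\beta$ yields $\beta=\frac{1+\varepsilon-(1+2\delta)\varepsilon c_\delta}{1-\varepsilon c_\delta}$, and the requirement $\beta<1$ reduces, after simplification, to $1-2\delta c_\delta<0$, i.e. $c_\delta>\frac{1}{2\delta}$. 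Thus any choice such as $c_\delta=\frac{1}{\delta}$ works, provided $\varepsilon_0$ is small enough that $\varepsilon c_\delta<1$ and $\beta>0$ (for instance $\varepsilon_0=\frac{\delta}{2}$). I expect the only genuine obstacle to be this simultaneous adjustment of $\beta$, $\gamma$ and $r$: one must spend just enough of the Orlicz margin $2\delta$ to push the supercritical exponent $1+\varepsilon$ down to a subcritical $\beta<1$, and the amount spent dictates exactly the loss of integrability measured by $r$. The remaining verifications—the conjugacy $\frac1r+\frac1{r'}=1$, the identity $\frac{\beta}{r}+\frac{\gamma}{r'}=1+\varepsilon$, and $pr>2$—are routine.
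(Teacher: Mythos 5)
Your proposal is correct and follows essentially the same route as the paper: a level-set splitting of $\R^2$, then H\"older's inequality on the superlevel set to interpolate between the subcritical bound of Proposition~\ref{usconsbis} (producing the $\|u\|_{L^{pr}}^{p}$ term) and the Orlicz constraint, with the same exponent bookkeeping --- the paper simply splits at $|u|=1$ instead of $M_0(\delta)$, spends only part of the margin by taking the coefficient $4\pi(1+\delta)$ in the second H\"older factor, and fixes $r'=\frac{1+\delta}{\varepsilon(2+\frac{1}{\delta})}$ explicitly rather than leaving $c_\delta$ free. The only slip is the parenthetical choice $\varepsilon_0=\frac{\delta}{2}$, which does not ensure $\beta\geq 0$ when $\delta\geq 1$ (with $c_\delta=\frac{1}{\delta}$ one needs $\varepsilon\leq\frac{\delta}{\delta+1}$); since the condition you actually impose is that $\varepsilon_0$ be small enough to make $\beta\geq 0$, this is immaterial.
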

Let us close this section by  introducing the definition of  the so-called Orlicz spaces on $\R^d$.
\begin{defi}\label{deforl}\quad\\
Let $\phi : \R^+\to\R^+$ be a convex increasing function such that
$$
\phi(0)=0=\lim_{s\to 0^+}\,\phi(s),\quad
\lim_{s\to\infty}\,\phi(s)=\infty.
$$
We say that a measurable function $u : \R^d\to\C$ belongs to
$L^\phi$ if there exists $\lambda>0$ such that
$$
\int_{\R^d}\,\phi\Big(\frac{|u(x)|}{\lambda}\Big)\,dx<\infty.
$$
We denote then \bq \label{norm}
\|u\|_{L^\phi}=\Inf\,\left\{\,\lambda>0,\quad\Int_{\R^d}\,\phi\Big(\frac{|u(x)|}{\lambda}\Big)\,dx\leq
1\,\right\}. \eq
\end{defi}

 \subsection{Development on  the lack of
compactness of Sobolev embedding  in the Orlicz space}
The Sobolev embeddings \eqref{2D-embed}
and \eqref{2D-embedbis} are non compact at
least for two reasons. The first reason is the lack of compactness
at infinity that we can highlight through the sequence $u_n(x)=\varphi(x+x_n)$ where
$0\neq\varphi\in{\mathcal D}$ and $|x_n|\to\infty$. The second
reason is of concentration-type derived by   J. Moser  in \cite{M} and by P.-L.  Lions
in \cite{Lions1, Lions2} and is illustrated by the following  fundamental
sequence  $(f_{\alpha_n})_{n\geq 0}$, when $(\alpha_n)_{n\geq 0}$ is a sequence of positive reals tending to infinity:
\begin{eqnarray}\label{Mos-LIons}
 f_{\alpha_n}(x)&=&\; \left\{
\begin{array}{cllll}\sqrt{\frac{\alpha_n}{2\pi}}\quad&\mbox{if}&\quad |x|\leq {\rm
e}^{-\alpha_n},\\\\ -\frac{\log|x|}{\sqrt{2\alpha_n\pi}} \quad
&\mbox{if}&\quad {\rm e}^{-\alpha_n}\leq |x|\leq 1 ,\\\\
0 \quad&\mbox{if}&\quad
|x|\geq 1.
\end{array}
\right.
\end{eqnarray}
Indeed, one can prove by straightforward  computations  (detailed for instance  in \cite{BMM}) that
$\|f_{\alpha_n}\|_{\mathcal L}\stackrel{n\to\infty}\longrightarrow  \frac{1}{\sqrt{4\pi}}$
and $\|f_{\alpha_n}\|_{\wt{\mathcal L}}\stackrel{n\to\infty}\longrightarrow  \frac{1}{\sqrt{4\pi}} \cdot$\\

\medskip
\noindent In \cite{BMM, BMM2, BMM1},  the lack of compactness of the critical Sobolev
embedding
$$ H^1(\R^2)\hookrightarrow {\mathcal L}(\R^2) \label{embedorlicz}
$$
  was  described in terms  of an asymptotic decomposition by means of generalization of the above
example by Moser. To  state  this characterization in a clear way, let us recall  some definitions.
\begin{defi}
\label{orthogen} We shall designate by  a scale any sequence $\underline{\alpha}:=(\alpha_n)$
of positive real numbers going to infinity and by a profile any  function $\psi$ belonging to the set $$
{\cP}:=\Big\{\;\psi\in L^2(\R,{\rm e}^{-2s}ds);\;\;\; \psi'\in
L^2(\R)\;\;\mbox{and}\;\;\psi_{|]-\infty,0]}=0\,\Big\}.
$$
Two scales $\underline{\alpha}$, $\underline{\beta}$  are said orthogonal (
in short $\underline{\alpha}\perp\underline{\beta}$) if
    $$
   \Big|\log\left({\beta_n}/{\alpha_n}\right)\Big|\stackrel{n\to\infty}\longrightarrow\infty.
    $$
\end{defi}
\medskip
\noindent The  asymptotically orthogonal  decomposition  derived in \cite{BMM} is formulated in the following terms:
\begin{thm}
\label{main} Let $(u_n)_{n \geq 0}$ be a bounded sequence in
$H^1_{rad}(\R^2)$ such that $$
u_n\rightharpoonup 0, \quad
\limsup_{n\to\infty}\|u_n\|_{\mathcal L}=A_0>0 \quad \quad
\mbox{and} $$  \bq \label{compl2}   \lim_{R\to\infty}\;
\limsup_{n\to\infty}\,\int_{|x|>R}\,|u_n(x)|^2\,dx=0.\eq Then, there
exist a sequence $(\underline{\alpha}^{(j)})$ of pairwise
orthogonal scales and a sequence of profiles $(\psi^{(j)})$ in
${\cP}$ such that, up to a subsequence extraction, we have for all
$\ell\geq 1$, \bq \label{decomp}
u_n(x)=\Sum_{j=1}^{\ell}\,\sqrt{\frac{\alpha_n^{(j)}}{2\pi}}\;\psi^{(j)}\left(\frac{-\log|x|}{\alpha_n^{(j)}}\right)+{\rm
r}_n^{(\ell)}(x),\quad\limsup_{n\to\infty}\;\|{\rm
r}_n^{(\ell)}\|_{\mathcal
L}\stackrel{\ell\to\infty}\longrightarrow 0. \eq Moreover, we have
the following stability estimates \bq \label{ortogonal} \|\nabla
u_n\|_{L^2(\R^2)}^2=\Sum_{j=1}^{\ell}\,\|{\psi^{(j)}}'\|_{L^2(\R)}^2+\|\nabla
{\rm r}_n^{(\ell)}\|_{L^2(\R^2)}^2+\circ(1),\quad n\to\infty. \eq
\end{thm}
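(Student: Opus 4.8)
The plan is to transfer the problem to the real line via the logarithmic substitution that is natural for radial functions, and then to run a greedy extraction of concentrating scales. For a radial $u$ put $v(s)=u({\rm e}^{-s})$; a direct change of variables ($r={\rm e}^{-s}$, $r\,dr={\rm e}^{-2s}ds$) gives
\[
\|\na u\|_{L^2(\R^2)}^2 = 2\pi\,\|v'\|_{L^2(\R)}^2,\qquad \|u\|_{L^2(\R^2)}^2 = 2\pi\int_\R |v(s)|^2\,{\rm e}^{-2s}\,ds,
\]
while the Orlicz functional becomes $\int_{\R^2}({\rm e}^{4\pi|u|^2}-1)\,dx = 2\pi\int_\R({\rm e}^{4\pi|v|^2}-1)\,{\rm e}^{-2s}\,ds$. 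This explains the normalisations in \eqref{decomp}: a profile $\sqrt{\alpha_n/2\pi}\,\psi(s/\alpha_n)$ has gradient energy exactly $\|\psi'\|_{L^2(\R)}^2$, the weight ${\rm e}^{-2s}$ forces $\psi\in L^2(\R,{\rm e}^{-2s}ds)$, and the support condition $\psi_{|]-\infty,0]}=0$ reflects that concentration takes place near the origin, i.e. as $s\to+\infty$. The hypotheses $u_n\rightharpoonup 0$ and the vanishing of the mass at infinity \eqref{compl2}, together with the radial pointwise bound \eqref{Bound}, guarantee that $v_n$ is uniformly small for $s\leq 0$ (that is for $|x|\geq 1$) and that no profile can sit at a bounded scale; hence all the scales produced below tend to infinity, as required by Definition \ref{orthogen}.

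First I would extract the dominant scale. Since $|v_n(s)-v_n(0)|\leq\sqrt{s}\,\|v_n'\|_{L^2(\R)}$ and $v_n(0)=u_n(1)$ is bounded by \eqref{Bound}, the functional $s\mapsto |v_n(s)|/\sqrt{s}$ is bounded on $s>0$; I would choose $\alpha_n^{(1)}$ to be (up to a subsequence) a point where it is essentially maximal. Rescaling through $\psi_n(y):=\sqrt{2\pi/\alpha_n^{(1)}}\,v_n(\alpha_n^{(1)}y)$ produces a sequence bounded in $\dot H^1(\R)$, whose weak limit $\psi^{(1)}$ is the first profile. It is nonzero because $|\psi_n(1)|^2=2\pi|v_n(\alpha_n^{(1)})|^2/\alpha_n^{(1)}$ stays bounded away from $0$; lower semicontinuity of the $\dot H^1$-norm gives $\psi^{(1)\prime}\in L^2(\R)$, while the localisation near the origin together with the $L^2$-weight above yields the remaining membership conditions defining ${\cP}$.

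Setting $r_n^{(1)}=u_n-\sqrt{\alpha_n^{(1)}/2\pi}\,\psi^{(1)}(-\log|\cdot|/\alpha_n^{(1)})$ and checking that it still satisfies the hypotheses of the theorem, I would iterate. The orthogonality $\underline{\alpha}^{(j)}\perp\underline{\alpha}^{(k)}$ for $j\neq k$ is forced by the greedy selection: if two scales stayed comparable the corresponding profiles would be captured in a single step. Orthogonality then makes the cross terms, which in the $s$-variable read $\frac{1}{\sqrt{\alpha_n^{(j)}\alpha_n^{(k)}}}\int \psi^{(j)\prime}(s/\alpha_n^{(j)})\,\psi^{(k)\prime}(s/\alpha_n^{(k)})\,ds$, vanish as $n\to\I$ (change variables $s=\alpha_n^{(j)}t$ and use $|\log(\alpha_n^{(k)}/\alpha_n^{(j)})|\to\I$ with $\psi^{(j)\prime},\psi^{(k)\prime}\in L^2$), while the cross terms with the remainder vanish by weak convergence. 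This is precisely the Pythagorean identity \eqref{ortogonal}, which in turn yields $\Sum_{j}\|\psi^{(j)\prime}\|_{L^2(\R)}^2\leq\limsup_n\|\na u_n\|_{L^2}^2<\I$, so the profile energies are summable.

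The delicate point — and the one I expect to be the main obstacle — is the convergence $\limsup_n\|r_n^{(\ell)}\|_{\cL}\to 0$ as $\ell\to\I$. Unlike the gradient energy, which is quadratic and splits orthogonally, the Orlicz norm is governed by the exponential nonlinearity, for which asymptotic orthogonality is far more subtle. The mechanism I would use is that the greedy choice forces the concentration functional $\sup_{s>0}|w_n^{(\ell)}(s)|/\sqrt{s}$ of the $(\ell+1)$-th remainder (where $w_n^{(\ell)}$ is the one-dimensional counterpart of $r_n^{(\ell)}$) to be bounded by the next extracted amplitude $\varepsilon_\ell$, which tends to $0$ as $\ell\to\I$ by summability of the profile energies. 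One then converts this smallness into smallness of the Orlicz norm: writing $\int_{\R^2}({\rm e}^{4\pi|r_n^{(\ell)}|^2}-1)\,dx=2\pi\int_\R({\rm e}^{4\pi|w_n^{(\ell)}|^2}-1)\,{\rm e}^{-2s}\,ds$ and bounding $4\pi|w_n^{(\ell)}(s)|^2\lesssim (4\pi\varepsilon_\ell^2)\,s$, the integrand is integrable against ${\rm e}^{-2s}$ once $4\pi\varepsilon_\ell^2<2$, i.e. once the concentration falls below the critical Moser threshold $\varepsilon_\ell<1/\sqrt{2\pi}$, and the resulting bound tends to $0$ with $\varepsilon_\ell$. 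Carrying out this last step sharply — while ensuring that subtracting the profiles creates no spurious concentration — is where the genuine analysis is needed, resting on the sharp Moser–Trudinger inequality of Proposition \ref{proptm} and its refinements.
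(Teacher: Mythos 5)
A preliminary remark: this paper never proves Theorem~\ref{main} at all --- the statement is imported verbatim from \cite{BMM} (``The asymptotically orthogonal decomposition derived in \cite{BMM} is formulated in the following terms''), so your proposal can only be compared with the argument of that reference. Your outline does reconstruct its architecture: the logarithmic substitution $v(s)=u({\rm e}^{-s})$, detection of scales through the ratio $|v_n(s)|/\sqrt{s}$ (exactly the quantity computing the limiting Orlicz norm of a pure profile, cf.~\eqref{profile}), profiles obtained as limits of the rescalings $\sqrt{2\pi/\alpha_n}\,v_n(\alpha_n\,\cdot)$, the Pythagorean expansion \eqref{ortogonal} from orthogonality of scales, and termination by energy counting. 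Two steps are stated too quickly but are repairable: the vanishing of the cross terms does \emph{not} follow from Cauchy--Schwarz alone (that bound is $O(1)$, not $o(1)$; one must first approximate $\psi^{(k)\prime}$ in $L^2$ by functions compactly supported in $]0,\infty[$); and your greedy selection is ill-posed as written, since for fixed $n$ one has $\sup_{s>0}|v_n(s)|/\sqrt{s}=+\infty$ whenever $v_n(0)=u_n(1)\neq 0$, so your claimed boundedness of this functional is false near $s=0$. This is why \cite{BMM} instead select points in the ``supercritical'' set $\{s\geq S:\ |v_n(s)|^2\geq (2-\epsilon)A^2 s\}$, which is nonempty for large $n$ (otherwise the Orlicz integral would be too small) and automatically escapes to infinity.

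The genuine gap is in your final step. Since $\phi(s)={\rm e}^{s^2}-1\geq s^2$, Definition~\ref{deforl} gives $\|r\|_{\mathcal L}\geq \kappa^{-1/2}\|r\|_{L^2}$ for every $r$; hence $\limsup_n\|{\rm r}_n^{(\ell)}\|_{\mathcal L}$ cannot tend to $0$ unless $\limsup_n\|{\rm r}_n^{(\ell)}\|_{L^2}$ does. Your conversion mechanism --- the pointwise bound $|w_n^{(\ell)}(s)|\leq \varepsilon_\ell\sqrt{s}$ integrated against ${\rm e}^{-2s}$ --- controls only the region $s>0$, i.e.\ $|x|<1$, and on $|x|\geq 1$ no amount of $L^\infty$-smallness can beat this $L^2$ lower bound: a sequence of bumps of height $\to 0$ supported in $\{|x|\geq 1\}$, spread out so that $\|r_n\|_{L^2}=1$, has zero concentration functional and vanishing sup norm, yet $\|r_n\|_{\mathcal L}\geq \kappa^{-1/2}$. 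What rules this out is precisely hypothesis \eqref{compl2}, whose essential role you miss (you invoke it only to push scales to infinity): together with Rellich's theorem on $\{|x|\leq R\}$ and $u_n\rightharpoonup 0$ in $H^1$, it forces $u_n\to 0$ \emph{strongly} in $L^2(\R^2)$. One must then also check that each subtracted profile carries no $L^2$ mass, i.e.\ $\|\sqrt{\alpha_n/2\pi}\,\psi(-\log|\cdot|/\alpha_n)\|_{L^2(\R^2)}\to 0$ (dominated convergence, using $|\psi(t)|\leq \sqrt{t}\,\|\psi'\|_{L^2(0,t)}$ and $\|\psi'\|_{L^2(0,t)}\to 0$ as $t\to 0^+$), so that the strong $L^2$ vanishing is inherited by every remainder ${\rm r}_n^{(\ell)}$. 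Only with these two ingredients does your implication ``small concentration functional $\Rightarrow$ small Orlicz norm'' become true, after which the energy-counting argument closes the proof as you describe.
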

\begin{rems}\label{remsprof}\quad \\
\begin{itemize}
\item The example  by Moser  can be written as $$
f_{\alpha_n}(x)= \sqrt{\frac{\alpha_n}{2\pi}}\;{\mathbf L}\Big(\frac{-\log|x|}{\alpha_n}\Big),
$$
where
\begin{eqnarray} \label{lionsprof}
{\mathbf L}(s)&=&\; \left\{
\begin{array}{cllll}0 \quad&\mbox{if}&\quad
s\leq 0,\\ s \quad
&\mbox{if}&\quad 0\leq s\leq 1,\\
1 \quad&\mbox{if}&s\geq 1.
\end{array}
\right.
\end{eqnarray}
\item Let us  emphasize that it was proved in \cite{BMM} that \begin{equation}
\label{OrliczMax} \|u_n\|_{\cL}\stackrel{n\to\infty}\longrightarrow  \sup_{j\geq
1}\,\left(\lim_{n\to\infty}\,\|g_n^{(j)}\|_{\cL}\right),
\end{equation}
where~$g_n^{(j)}(x) = \sqrt{\frac{\alpha_n^{(j)}}{2\pi}}\;\psi^{(j)}\left(\frac{-\log|x|}{\alpha_n^{(j)}}\right)$, and
\bq \label{profile}
\Lim_{n\to\infty}\,\|g_n^{(j)}\|_{{\cL}}=\frac{1}{\sqrt{4\pi}}\,\max_{s>0}\;\frac{|\psi^{(j)}(s)|}{\sqrt{s}}\cdot \eq
\item Therefore (see \cite{BMM1} for a detailed proof),  if in addition to  the assumptions of Theorem \ref{main} the sequence  $(u_n)_{n \geq 0}$ satisfies
$$\|u_n\|_{{\mathcal
L}}=  \frac{1}{\sqrt{4\pi}}\|\nabla u_n\|_{L^2} + \circ(1),$$ then we have necessary
\begin{equation} \label{formprofile}u_n(x)=\sqrt{\frac{\alpha_n}{2\pi}}\;{\mathbf L}\left(\frac{-\log|x|}{\alpha_n}\right)+{\rm
r}_n(x),\quad \|\nabla {\rm
r}_n\|_{L^2} \to 0 \quad \mbox{as} \quad n\to\infty, \end{equation} with ${\mathbf L}$ the Lions profile given by \eqref{lionsprof}.
\item Taking advantage of the above remark,  we infer that if a bounded sequence $(u_n)_{n \geq 0}$  in
$H^1_{rad}(\R^2)$ converges to zero in $L^4(\R^2)$ and satisfies
\begin{equation} \label{assumtild}\|u_n\|_{\wt{\mathcal
L}}=  \frac{1}{\sqrt{4\pi}}\|\nabla u_n\|_{L^2}+ \circ(1),\end{equation}
then \begin{equation} \label{formprofiletild}u_n(x)=\sqrt{\frac{\alpha_n}{2\pi}}\;{\mathbf L}\left(\frac{-\log|x|}{\alpha_n}\right)+{\rm
r}_n(x),\quad \|\nabla {\rm
r}_n\|_{L^2} \to 0 \quad \mbox{as} \quad n\to\infty. \end{equation}
Indeed, write
$$ u_n= \chi u_n + (1-\chi) u_n,$$
where $\chi$ is a radial function  in~$\mathcal{D}(\R^2)$ equal to one in the unit ball and valued in  $[0,1]$. Thus by virtue of the radial estimate \eqref{redestl4bis}, we have $$ \| \wt v_n\|_{L^\infty} \stackrel{n\to\infty}\longrightarrow 0,$$ where $\wt v_n:=(1-\chi)  u_n$,  which implies that
 $$  \| \wt v_n\|_{\wt{\mathcal
L}} \stackrel{n\to\infty}\longrightarrow 0.$$
 Therefore \eqref{assumtild} also reads
  \begin{equation} \label{bisest} \|v_n\|_{\wt{\mathcal
L}}=  \frac{1}{\sqrt{4\pi}}\|\nabla u_n\|_{L^2}+ \circ(1),\end{equation}
with  $v_n:= \chi u_n$. \\

\noindent By H\"older inequality
 \begin{equation} \label{thirdest}\|v_n\|_{L^2} \leq \|v_n\|_{L^4} \|\chi\|_{L^4}\stackrel{n\to\infty}\longrightarrow 0,\end{equation} which gives rise in view of the Sobolev embedding \eqref{2D-embedbis} to
  \begin{equation} \label{forest} \|v_n\|_{\wt{\mathcal
L}}\leq  \frac{1}{\sqrt{4\pi}}\|\nabla v_n\|_{L^2}+ \circ(1).\end{equation}
Taking advantage of the fact that the function $\chi$ takes its values in  $[0,1]$, we get
\begin{eqnarray*} \|\nabla v_n\|^2_{L^2} + \|\nabla \wt v_n\|^2_{L^2} &=& \| \chi \nabla u_n\|^2_{L^2}+ \| (1-\chi) \nabla u_n\|^2_{L^2}+ \circ(1) \\ &\leq& \|  \nabla u_n\|^2_{L^2}+ \circ(1).\end{eqnarray*}
Thus  by virtue of \eqref{bisest}, \eqref{thirdest} and \eqref{forest}
  $$ \|v_n\|_{{\mathcal
L}}=  \frac{1}{\sqrt{4\pi}}\|\nabla v_n\|_{L^2}+ \circ(1),$$
and $\|\nabla \wt v_n\|^2_{L^2}= \circ(1)$. \\

\noindent Now in view of the previous remark, we deduce that
$$ v_n(x)=\sqrt{\frac{\alpha_n}{2\pi}}\;{\mathbf L}\left(\frac{-\log|x|}{\alpha_n}\right)+{\rm
r}_n(x),\quad \|\nabla {\rm
r}_n\|_{L^2} \to 0 \quad \mbox{as} \quad n\to\infty,$$
which  ensures the explicit description \eqref{formprofiletild} since
 $$ v_n= u_n + \wt{\rm
r}_n, \quad \mbox{with} \quad \|\nabla \wt{\rm
r}_n\|_{L^2}\stackrel{n\to\infty}\longrightarrow 0.$$

%and by Corollary \ref{cormoser}
% $$ \|v_n\|_{\wt{\mathcal
%L}} \leq  \frac{1}{\sqrt{4\pi}}\|\nabla v_n\|_{L^2}+ \circ(1),$$
%and
%$$ \|\nabla u_n\|^2_{L^2} = \|\nabla v_n\|^2_{L^2}+ \|\nabla (1-\chi)  u_n\|^2_{L^2} + 2 (\nabla v_n, \nabla (1-\chi)  u_n)_{L^2}$$

%$$  \| (1-\chi) \nabla u_n\|_{L^2} \stackrel{n\to\infty}\longrightarrow 0,$$

\item It was shown in  \cite{H} that the sequence $( f_{\alpha_n})$  also writes under the form:
$$ f_{\alpha_n} (x)=  \wt {f}_{\alpha_n} (x) + \wt {\rm r}_n (x),$$
with \begin{equation} \label{logmoser}\wt {f}_{\alpha_n} (x) = \frac{1}{(2\pi)^2} \sqrt{\frac{2\pi}{\alpha_n}}\int_{ 1  \leq |\xi| \leq {\rm e}^{ \alpha_n}}  {\rm e}^{ i \, x \cdot \xi} \frac{1}{|\xi|^2}\, d \xi\,, \end{equation}
  and $\|
\wt {\rm r}_n\|_{H^1} \stackrel{n\to\infty}\longrightarrow 0$.
    \end{itemize}
    \end{rems}
    \medskip

\subsection{Linear Schr\"odinger equation}\label{seclin}
 It is well-known  that  the solutions of  the linear Schr\"odinger equation:
  \begin{equation}
\label{SL} \left\{
 \begin{aligned}
  & i\partial_t v + \Delta v=0, \\
  &v_{|t=0}=v_{0}\in H^1(\R^2),
 \end{aligned}
\right.
\end{equation} satisfy
 the conservation laws
\bq \label{consen} E_0(v,t) =  \|\nabla
v(t,\cdot)\|_{L^2}^2= \|\nabla
v(0,\cdot)\|_{L^2}^2 = E_0(v),\eq
\bq \label{consen2}   \|
v(t,\cdot)\|_{L^2}^2= \|
v(0,\cdot)\|_{L^2}^2\,,\eq
and for $t \neq 0$ the dispersive inequality
\begin{equation}
\label{disest}  \|
v(t,\cdot)\|_{L^\infty} \lesssim \frac {\|v(0, \cdot)\|_{L^1}} {|t|}\cdot \end{equation}
Combining \eqref{consen2}, \eqref{disest}  together with
the interpolation between $L^p$ spaces
imply that
\begin{equation}
\label{disestgen}
\forall t\in \R\setminus\{0\}\,,\ \forall p \in  [2, \infty]\,,\ \
\|v(t, \cdot)\|_{L^p} \lesssim \frac 1{|t|^{(1  -\frac 2 p
)}}\|v(0, \cdot)\|_{L^{p'}}.
\end{equation}
Thanks to   the so-called $TT^*$ argument which is the standard
method for converting the dispersive estimates  into inequalities involving suitable space-time Lebesgue norms of the
solutions, we get the following estimates known by Strichartz estimates  which will
be of constant use in this paper (see \cite{Caz1}):
\begin{prop}
\label{Stri}
Let $I\subset\R$ be a time slab, $t_0\in I$ and $(q,r)$, $(\tilde{q},\tilde{r})$ two $L^2$-admissible Strichartz pairs, {\rm i.e.},
\begin{equation}
\label{adpair}
2\leq r,\tilde{r}<\infty\quad\mbox{and}\quad \frac{1}{q}+\frac{1}{r}=\frac{1}{\tilde{q}}+\frac{1}{\tilde{r}}=\frac{1}{2}\;\cdot
\end{equation}
There exists a positive constant $C$ such that if $u$ is the solution of the Cauchy  problem
$$ \left\{
 \begin{aligned}
  &i\partial_t v+\Delta v = G(t,x), \\
  &v_{|t=t_0}=u_{0}\in H^1(\R^2),
 \end{aligned}
\right.
$$
then for $j \in \{0, 1\} $
\begin{equation}\label{str}
\|\nabla^j v\|_{L^q(I,L^r)}\leq C\Big( \|\nabla^jv(t_0)\|_{L^2}+\|\nabla^j G\|_{L^{{\tilde{q}}'}(I,L^{\tilde r'})}\Big),
\end{equation}
where $p'$  denotes  the conjugate exponent of $p$, defined by:
$$ \frac {1} p +  \frac  1 { p'} =1,  \;\; \mbox{with the rule that} \;\;\frac  1 {\infty} =0.\quad\quad $$
\end{prop}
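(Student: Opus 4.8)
The plan is to establish \eqref{str} first in the homogeneous-source form and then promote it to the full inhomogeneous statement by duality and a retarded-kernel argument. Throughout I write $U(t)={\rm e}^{it\Delta}$ for the free propagator and exploit that $\nabla$ commutes with $U(t)$: since $v$ solves $i\partial_t v+\Delta v=G$ with $v(t_0)=u_0$, the function $\nabla v$ solves the same equation with source $\nabla G$ and data $\nabla u_0$, so the case $j=1$ follows at once from the case $j=0$ applied to $\nabla v$. It therefore suffices to treat $j=0$, and by Duhamel's formula $v(t)=U(t-t_0)u_0-i\int_{t_0}^t U(t-s)G(s)\,ds$ it suffices to bound separately the free evolution $U(t-t_0)u_0$ and the retarded integral.

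For the free part I use the $TT^*$ method. Define $T:L^2(\R^2)\to L^q(\R,L^r)$ by $(Tu_0)(t)=U(t)u_0$; by the standard duality, the sought bound $\|Tu_0\|_{L^q(I,L^r)}\lesssim\|u_0\|_{L^2}$ is equivalent to the boundedness of $TT^{*}G(t)=\int_\R U(t-s)G(s)\,ds$ from $L^{q'}(I,L^{r'})$ to $L^q(I,L^r)$. Applying the dispersive bound \eqref{disestgen} with $p=r$ pointwise in $t,s$ gives $\|U(t-s)G(s)\|_{L^r}\lesssim|t-s|^{-(1-\frac 2 r)}\|G(s)\|_{L^{r'}}$, so $\|(TT^{*}G)(t)\|_{L^r}$ is dominated by the convolution of $s\mapsto\|G(s)\|_{L^{r'}}$ with the one-dimensional kernel $|t|^{-(1-\frac 2 r)}$. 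The Hardy--Littlewood--Sobolev inequality in the $t$ variable then yields the desired bound under the gain condition $\frac 1 q=\frac 1 {q'}-\frac 2 r$, which, in view of $\frac 1 q+\frac 1 {q'}=1$, is exactly the admissibility relation $\frac 1 q+\frac 1 r=\frac 1 2$. Since the statement excludes $r=\infty$, we have $q>2$ and the kernel exponent $1-\frac 2 r$ lies strictly in $(0,1)$, so HLS applies with no endpoint subtlety.

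For the inhomogeneous term, let $\tilde T$ denote the analogous operator attached to the pair $(\tilde q,\tilde r)$; the first step gives that both $T$ and $\tilde T$ are bounded from $L^2$ into their respective Strichartz spaces. The untruncated Duhamel operator factors as $\int_\R U(t-s)G(s)\,ds=T\tilde T^{*}G$, whence $\|T\tilde T^{*}G\|_{L^q(\R,L^r)}\le\|T\|_{L^2\to L^q L^r}\,\|\tilde T^{*}G\|_{L^2}\lesssim\|G\|_{L^{\tilde q'}(\R,L^{\tilde r'})}$ by boundedness of $\tilde T^{*}$. It then remains to replace this full integral by the retarded one $\int_{t_0}^t U(t-s)G(s)\,ds$ that actually appears in Duhamel's formula, which is handled by the Christ--Kiselev lemma; its hypothesis $\tilde q'<q$ holds here because $r,\tilde r<\infty$ force $q,\tilde q>2$, hence $\tilde q'<2<q$. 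Combining with the free estimate for $U(t-t_0)u_0$ yields \eqref{str} for $j=0$, and the commutation argument above delivers $j=1$.

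The genuinely delicate point is this last passage from the untruncated bilinear bound to the retarded integral: one must invoke the Christ--Kiselev lemma, and it is precisely the exclusion of the $L^2$-endpoint ($r,\tilde r<\infty$) that makes the strict inequality $\tilde q'<q$ available and keeps the argument elementary. The dispersive-estimate/HLS combination and the commutation with $\nabla$ are routine by comparison. Had the endpoint case been included, this step would instead require the finer Keel--Tao machinery, but that is not needed for the pairs used in this paper.
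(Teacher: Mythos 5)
Your proposal is correct and follows precisely the route the paper itself indicates: the paper does not prove Proposition \ref{Stri} at all, but quotes it from \cite{Caz1} after remarking that it follows from the dispersive estimate \eqref{disestgen} via the $TT^*$ argument, which is exactly what you carry out (with Hardy--Littlewood--Sobolev and Christ--Kiselev supplying the details, and the commutation of $\nabla$ with the flow reducing $j=1$ to $j=0$). The only repair needed is the pair $r=2$ (allowed by \eqref{adpair}, and indeed used in the paper's ST norm), for which $q=\infty$ and the kernel exponent $1-\frac{2}{r}$ equals $0$, so your claim that it lies strictly in $(0,1)$ fails and the HLS step degenerates; this case is handled trivially and separately, since unitarity of $U(t)={\rm e}^{it\Delta}$ on $L^2$ gives the free estimate, while $\bigl\|\int_{t_0}^t U(t-s)G(s)\,ds\bigr\|_{L^2_x}=\bigl\|\int_{t_0}^t U(-s)G(s)\,ds\bigr\|_{L^2_x}\lesssim\|G\|_{L^{\tilde q'}(I,L^{\tilde r'})}$ follows directly from the boundedness of $\tilde T^{*}$ applied to $\chi_{[t_0,t]}G$, uniformly in $t$, with no need for Christ--Kiselev there.
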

\noindent Note in particular that $(q,r)=(4,4)$ is an admissible Strichartz pair and that (see \cite{BCD} for instance)
\begin{equation}
\label{embst}
W^{1,4}(\R^2)\hookrightarrow {\mathcal C}^{1/2}(\R^2).
\end{equation}Now, for any time slab $I\subset\R$, we shall denote
$$
\|v\|_{\mbox{\tiny ST}(I)}:=\sup_{j \in \{0, 1\}}\big(\|\nabla^jv\|_{L^4(I, L^4)} + \|\nabla^jv\|_{L^\infty(I, L^2)}\big)\,\mbox{and}\, \|v\|_{\mbox{\tiny ST}^*(I)}:=\sup_{j \in \{0, 1\}}\|\nabla^jv\|_{L^\frac4 3(I, L^\frac4 3)}.$$

\section{Virial identity   and scattering under smallness conditions}\label{Vid}
This section is devoted to the proof of basic estimates needed to develop the proof of Theorem \ref{Main-NLS}, namely virial identity and the $H^1$-scattering under smallness conditions. \subsection{Virial identity} \label{subVirial}
The aim of this paragraph is to present virial identity in the framework of the two dimensional   nonlinear Schr\"odinger equation
\eqref{NLS}.  For that purpose, let us introduce  a  smooth and radial function $\Phi$ satisfying $0\leq\Phi\leq1$, $\Phi(r)=r$, for all $r\leq1$, and $\Phi(r)=0$ for all $r\geq2$. For any positive real $R$ and any function   $u(x,t)$, we define
$$
V_R(t):=\int_{\R^2}\Phi_R(x)|u(x,t)|^2\;dx,
$$
where $\Phi_R(x):=R^2\Phi(\frac{|x|^2}{R^2})$. \\

As usual integrating by parts, we get in view of  \eqref{NLS}-\eqref{def f} and properties of $\Phi_R$ the following useful estimate known by virial identity:
\begin{lem} \label{lemvirial}
If $u$ is a solution to \eqref{NLS}-\eqref{def f}, then $V_R$ satisfies
\EQ{
\label{Virial}
\frac d{dt} V_R(t)=2{\mathcal I} \int_{\R^2}(\nabla\Phi_R (x)\cdot\nabla u (t,x))\,\bar u(t,x)\;dx,
}
where for $z \in \C$, ${\mathcal I}(z)$ denotes the imaginary part of $z$, and
\EQ{\label{virial}
\frac{d^2}{dt^2} V_R (t)&=8 \int_{\R^2}\Phi'\Big(\frac{|x|^2}{R^2}\Big)|\nabla u (t,x)|^2 \;dx+16\int_{\R^2}\Phi''\Big(\frac{|x|^2}{R^2}\Big) \frac{|x\cdot\nabla u(t,x)|^2 }{R^2} \;dx
\\ &-\int_{\R^2}|u(t,x)|^2\Delta^2\Phi_R(x)\;dx
+2\int_{\R^2}\Delta\Phi_R(x)\big(|u(t,x)|^2 \tilde f(|u(t,x)|^2)-g(|u(t,x)|^2)\big)\;dx,
}
where $\tilde f(s)={\rm e}^{4\pi s}-1-4\pi s$ and $\ds g(s)=\int_0^s\tilde f(\rho)\;d\rho$.
\end{lem}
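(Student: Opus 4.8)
The plan is to obtain both identities by differentiating $V_R$ under the integral sign and systematically integrating by parts, exploiting that the weight $\Phi_R$ is supported in $\{|x|\le\sqrt2\,R\}$ so that no boundary terms at infinity arise. Throughout I write $\rho=|u|^2$ and denote by $\mathcal R(z)$ the real part of a complex number $z$; all the manipulations below are first carried out for smooth, suitably decaying solutions and then extended to the $H^1$ setting by a standard regularization/approximation argument together with the conservation laws.

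For the first identity I would start from $\frac{d}{dt}V_R=\int_{\R^2}\Phi_R\,\partial_t\rho\,dx$ and compute $\partial_t\rho=2\mathcal R(\bar u\,\partial_t u)$. Substituting $\partial_t u=i(\Delta u-f(u))$ from \eqref{NLS}-\eqref{def f} and using that $f(u)=\tilde f(|u|^2)\,u$ with $\tilde f$ real-valued — so that $\bar u f(u)=\tilde f(|u|^2)|u|^2\in\R$ and hence $\mathcal R\big(i\,\bar u f(u)\big)=0$ — the entire nonlinear contribution disappears and one is left with the continuity equation $\partial_t\rho=-2\,\div\,\mathcal I(\bar u\,\nabla u)$, where I have used $\mathcal I(\nabla\bar u\cdot\nabla u)=0$ since $\nabla\bar u\cdot\nabla u=|\nabla u|^2$ is real. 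Integrating by parts once against $\Phi_R$ then yields $\frac{d}{dt}V_R=2\int_{\R^2}\nabla\Phi_R\cdot\mathcal I(\bar u\,\nabla u)\,dx=2\,\mathcal I\int_{\R^2}(\nabla\Phi_R\cdot\nabla u)\,\bar u\,dx$, which is \eqref{Virial}.

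For the second identity I would differentiate \eqref{Virial} in time and again substitute $\partial_t u=i(\Delta u-f(u))$ and $\partial_t\bar u=-i(\Delta\bar u-\overline{f(u)})$. After repeated integration by parts the result organizes into two blocks. The kinetic block produces the Hessian of $\Phi_R$ contracted against $\mathcal R(\partial_j u\,\partial_k\bar u)$, namely $4\int_{\R^2}\sum_{j,k}\partial_j\partial_k\Phi_R\,\mathcal R(\partial_j u\,\partial_k\bar u)\,dx$, together with a term $-\int_{\R^2}\rho\,\Delta^2\Phi_R\,dx$ coming from the Laplacian of the density after two further integrations by parts. Using $\Phi_R(x)=R^2\Phi(|x|^2/R^2)$, so that $\partial_j\partial_k\Phi_R=2\delta_{jk}\Phi'(|x|^2/R^2)+\tfrac{4}{R^2}x_jx_k\,\Phi''(|x|^2/R^2)$ and $\sum_{j,k}x_jx_k\,\mathcal R(\partial_j u\,\partial_k\bar u)=|x\cdot\nabla u|^2$, the Hessian contraction collapses exactly to $8\int_{\R^2}\Phi'\,|\nabla u|^2\,dx+16\int_{\R^2}\Phi''\,|x\cdot\nabla u|^2/R^2\,dx$, recovering the first two terms of \eqref{virial}.

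The remaining block is the nonlinear one, and this is the step requiring the most care. Here I would track the contribution of $f(u)=\tilde f(\rho)\,u$ to $\frac{d^2}{dt^2}V_R$; after integration by parts it reduces to an integral of $\nabla\Phi_R$ against a gradient which, using $\nabla\big(g(\rho)\big)=\tilde f(\rho)\,\nabla\rho$ (recall $g'=\tilde f$ by definition of $g$), can be recombined into $2\int_{\R^2}\Delta\Phi_R\big(\rho\,\tilde f(\rho)-g(\rho)\big)\,dx$, precisely the last term of \eqref{virial}. The main obstacle is exactly this bookkeeping: one must verify that the genuinely nonlinear pieces assemble into the pressure combination $\rho\,\tilde f(\rho)-g(\rho)$, and that each integration by parts is licit. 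At the level of $H^1$ solutions the latter point is handled by performing the computation for regularized data and passing to the limit, using the a priori control on $\|u\|_{H^1}$ and the fact that $\Phi_R$ is compactly supported so that all the integrands are controlled by $\|u\|_{H^1}$ and by the local smoothing inherent in the $L^4_tW^{1,4}_x$ framework.
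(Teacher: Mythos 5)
Your proposal is correct and follows essentially the same route as the paper: differentiate $V_R$ under the integral, substitute the equation (noting $\bar u f(u)=\tilde f(|u|^2)|u|^2$ is real so the nonlinearity drops from the first identity), then differentiate \eqref{Virial}, integrate by parts, and identify the pressure term via $g'=\tilde f$. Your presentation of the kinetic block through the Hessian contraction $4\int\partial_j\partial_k\Phi_R\,\mathcal R(\partial_j u\,\partial_k\bar u)\,dx$ is just a repackaging of the paper's term-by-term integrations by parts, and the rest (including the collapse to the $\Phi'$ and $\Phi''$ terms and the combination $|u|^2\tilde f(|u|^2)-g(|u|^2)$) matches the paper's computation.
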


\begin{proof}
According to the fact that $u \,  f( \bar u ) = \bar u \,  f(  u )$, we infer that if $u$ is a solution to \eqref{NLS}-\eqref{def f} then we have  $$ \frac d{dt} V_R (t) =i \, \int_{\R^2} \Phi_R(x) \big(\bar u \, \Delta u-u \, \Delta  \bar u\big)(t,x)\;dx.$$
This gives rise to \eqref{Virial} by integration by parts.

\medskip

\noindent Let us now go to the proof of  \eqref{virial}. As $u$ solves \eqref{NLS}, we deduce from  \eqref{Virial} that

\begin{eqnarray*}
\frac{d^2}{dt^2} V_R(t)&=&2\,{\mathcal I} \,\int_{\R^2}\nabla\Phi_R(x)\cdot\nabla(i\Delta u-if(u) )(t,x)\,\bar u (t,x)\;dx\\
&+&2\,{\mathcal I}\, \int_{\R^2}\nabla\Phi_R (x)\cdot\nabla u (t,x)\,(-i\Delta\bar u+if(\bar u))(t,x).
\end{eqnarray*}
Integrating by parts the first term of the right hand side of the above identity  gives
\begin{eqnarray*}
\frac{d^2}{dt^2} V_R(t)&=&2{\mathcal R} \int_{\R^2}\Delta\Phi_R (x) f(u(t,x))\bar u(t,x)+\big(f(u)\nabla\bar u + f(\bar u)\nabla u\big)(t,x) \cdot\nabla\Phi_R (x)\;dx\\
&-&2{\mathcal R}\int_{\R^2}\Delta\Phi_R(x)\bar u(t,x) \Delta u(t,x)+\nabla\Phi_R(x)\cdot \big(\nabla\bar u\Delta u+\nabla u\Delta\bar u\big)(t,x)\;dx, \end{eqnarray*} where for $z \in \C$, ${\mathcal R}(z)$ denotes the real part of $z$.

\smallskip
\noindent
Besides straightforward   computations lead to
\begin{eqnarray*}
J (t)&=& \int_{\R^2} \Delta\Phi_R(x) f(u(t,x))\bar u(t,x)+  \tilde f(|u (t,x)|^2)\nabla |u(t,x)|^2\cdot\nabla\Phi_R(x)\;dx\\
&=&\int_{\R^2}\Delta\Phi_R (x)\big(|u|^2\tilde f(|u|^2)- g(|u|^2)\big)(t,x)\;dx,
\end{eqnarray*}
where $$J (t):= \int_{\R^2}\Delta\Phi_R (x) f(u(t,x))\bar u(t,x)+\big(f(u)\nabla\bar u + f(\bar u)\nabla u\big)(t,x) \cdot\nabla\Phi_R (x)\;dx.$$
Along the same lines, we obtain
$$
2{\mathcal R}\int_{\R^2}\Delta\Phi_R(x)\bar u(t,x) \Delta u(t,x)=\int_{\R^2}|u(t,x)|^2\Delta^2\Phi_R(x)-2\Delta\Phi_R(x)|\nabla u(t,x)|^2,
$$
and
\begin{eqnarray*}
& & 4{\mathcal R}\int_{\R^2}\nabla\Phi_R(x)\cdot\nabla u (t,x)\Delta\bar u(t,x)\;dx= 2\int_{\R^2}\Delta\Phi_R(x)|\nabla u(t,x)|^2\;dx \\& & \qquad  \qquad \qquad - 8\int_{\R^2}\Phi'\Big(\frac{|x|^2}{R^2}\Big)
|\nabla u(t,x)|^2 dx - 16 \int_{\R^2}\Phi''\Big(\frac{|x|^2}{R^2}\Big)\frac{|x\cdot\nabla u (t,x)|^2}{R^2}\;dx.
\end{eqnarray*} This easily ensures the result.
\end{proof}

  \medbreak

As a  byproduct of virial identity, we get the following useful result:
\begin{cor}
\label{newesvir}  Let $u$ be  a solution to \eqref{NLS}-\eqref{def f} of mass $M$
and Hamiltonian $H \leq 1$. Then, there is a constant $C(M,H)$ such that for any positive real $\tau$, we have
 \begin{equation}
\label{eqvirGbis} \int^{t+\tau}_t \,\int_{|x| \leq 1} \,G(u(s,x))\, dx \, ds \leq C(M,H) \langle \tau \rangle,
\end{equation}
for all $t \in \R$, where $ G(u) = \left({\rm e}^{4\pi |u|^2}-1-4\pi |u|^2\right) |u|^2$.
\end{cor}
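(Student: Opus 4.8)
The plan is to take $R=1$ in the virial identity of Lemma~\ref{lemvirial} and to integrate it in time over $[t,t+\tau]$. Recall that for $|x|\le 1$ one has $\Phi_1(x)=|x|^2$, so that $\Delta\Phi_1\equiv 4$ on the unit disc, while $\Delta\Phi_1$ is bounded and supported in $\{|x|\le 2\}$ and $\Delta^2\Phi_1$ is bounded and supported in the annulus $\{1\le|x|\le 2\}$. The first observation is purely algebraic: since $\tilde f(|u|^2)\,|u|^2=G(u)$ and, by a direct integration,
$$ g(s)=\int_0^s\tilde f(\rho)\,d\rho=\frac{1}{4\pi}\big({\rm e}^{4\pi s}-1-4\pi s-8\pi^2 s^2\big), $$
we recognize that $g(|u|^2)=F(u)$ is exactly the potential density appearing in the Hamiltonian \eqref{hamil}. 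Since the Taylor expansion of $F$ shows $F\ge 0$, the control of mass and energy then yields the two basic bounds $\|\na u(s)\|_{L^2}^2\le H\le 1$ and $\int_{\R^2}F(u(s,x))\,dx\le H\le 1$ for every $s$.

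With these remarks I would isolate the sought quantity from \eqref{virial}: on $\{|x|\le 1\}$ the nonlinear term equals $8\int_{|x|\le1}\big(G(u)-F(u)\big)$, so that
$$ 8\int_{|x|\le1}G(u(s,x))\,dx=\frac{d^2}{dt^2}V_1(s)-R_1(s), $$
where $R_1(s)$ collects the gradient terms $8\int\Phi'|\na u|^2$ and $16\int\Phi''|x\cdot\na u|^2$, the bilaplacian term $\int|u|^2\Delta^2\Phi_1$, the potential term $2\int\Delta\Phi_1\,F(u)$, and the contribution $2\int_{1\le|x|\le2}\Delta\Phi_1\,G(u)$ of the transition annulus. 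Each of these pieces is bounded by a constant $C(M,H)$ uniformly in $s$: the gradient terms by $\|\Phi'\|_{L^\infty}\|\na u\|_{L^2}^2\le CH$ and $\|\Phi''\|_{L^\infty}\|\na u\|_{L^2}^2\le CH$ (using $|x|\le 2$ on the support), the bilaplacian term by $\|\Delta^2\Phi_1\|_{L^\infty}\,M$, and the potential term by $\|\Delta\Phi_1\|_{L^\infty}\int F(u)\le CH$, where the positivity $F\ge 0$ is used to convert the signed integral into a bound by the conserved energy.

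The step where the radial setting enters decisively is the annulus term. For $|x|\ge 1$ the pointwise estimate \eqref{Bound} gives $|u(s,x)|\le C\|u\|_{L^2}^{1/2}\|\na u\|_{L^2}^{1/2}\le C\,M^{1/4}$, so that $|u|^2$, and hence $G(u)$, stays bounded on the fixed region $\{1\le|x|\le2\}$ by a constant depending only on $M$ (and $H\le 1$); integrating over this region of finite measure controls the last term by $C(M,H)$. Collecting all the estimates yields $|R_1(s)|\le C(M,H)$ for all $s$.

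Finally I would integrate over $[t,t+\tau]$. The second-derivative term telescopes into $V_1'(t+\tau)-V_1'(t)$, which by \eqref{Virial} and $|\na\Phi_1|\lesssim 1$ is bounded by $C\|\na u\|_{L^2}\|u\|_{L^2}\le C(M,H)$ uniformly in time, while $\int_t^{t+\tau}|R_1(s)|\,ds\le C(M,H)\,\tau$. This produces $\int_t^{t+\tau}\int_{|x|\le1}G(u)\,dx\,ds\le C(M,H)(1+\tau)$, and since $1+\tau\le\sqrt2\,\langle\tau\rangle$ the claimed estimate \eqref{eqvirGbis} follows. I do not expect any single inequality to be the real difficulty; the main point is the bookkeeping of signs, namely that $G\ge 0$ and $F\ge 0$ together with the identification $g(|u|^2)=F(u)$ make every term other than the target integral genuinely controllable by the conserved mass and energy, with the radial decay \eqref{Bound} taking care of the transition annulus.
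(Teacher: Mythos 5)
Your proof is correct and is essentially the paper's own argument: the virial identity of Lemma~\ref{lemvirial} with $R=1$, a uniform-in-time bound $|R_1(s)|\le C(M,H)$ on the remainder obtained from the conservation laws \eqref{mass}-\eqref{hamil} together with the radial estimate \eqref{Bound} on the fixed annulus, and a time integration in which the second derivative telescopes into $\frac{d}{dt}V_1$, itself bounded uniformly via \eqref{Virial}. The only divergence is in the bookkeeping of the nonlinear term: the paper keeps $\psi(u)=|u|^2\tilde f(|u|^2)-g(|u|^2)$ intact, proves the bound for $\int_{|x|\le 1}\psi(u(s,x))\,dx$, and concludes with the pointwise comparison $G(u)\lesssim\psi(u)$ (which rests on the term-by-term inequality $g(s)\le\frac13\,s\tilde f(s)$), whereas you split $\psi(u)=G(u)-F(u)$ using the identity $g(|u|^2)=F(u)$ and absorb $8\int_{|x|\le1}F(u(s,x))\,dx\le 8H$ into the remainder thanks to $F\ge 0$ and energy conservation --- an equally elementary and slightly cleaner piece of bookkeeping.
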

\begin{proof}
We shall proceed as in the proof of Lemma \ref{lemvirial} setting $$
V(t):=\int_{\R^2}\Phi_1(x)|u(x,t)|^2\;dx,
$$
where $\Phi_1(x):=\Phi(|x|^2)$, with $\Phi$ the function introduced above. Thus  thanks to \eqref{Virial}, we get
 \begin{equation}
\label{eqGV} \Big| \frac d{dt} V (t)  \Big|   \lesssim H^{\frac 1 2} \,M^{\frac 1 2}.\end{equation}
Moreover, in light of \eqref{virial}
\begin{eqnarray*}
\frac{d^2}{dt^2} V(t)&= & 8 \int_{\R^2}\Phi'(|x|^2)|\nabla u (t,x)|^2 \;dx +16\int_{\R^2}\Phi''(|x|^2) |x\cdot\nabla u(t,x)|^2  \;dx \\ &- &\int_{\R^2}|u(t,x)|^2\Delta^2\Phi_1(x)\;dx +2\int_{\R^2}\Delta\Phi_1(x)  \; \psi (u(t,x))\;dx,
\end{eqnarray*}
where, with the notations of Lemma \ref{lemvirial}, $\psi (u) := \big(|u|^2 \tilde f(|u|^2)-g(|u|^2)\big)$.\\

\noindent Straightforward computations lead to
$$ 2\int_{|x| \leq 1}\Delta\Phi_1(x)  \; \psi (u(t,x))\;dx =  8 \int_{|x| \leq 1} \psi (u(t,x))\;dx = \frac{d^2}{dt^2} V(t) + {\rm R}(t), $$
where the remainder term ${\rm R}(t)$ satisfies
 $$ \Big| {\rm R}(t) \Big|   \lesssim \|u(t,\cdot) \|^2_{L^2} + \|\nabla u(t,\cdot) \|^2_{L^2} + \int_{|x| \geq 1} \psi (u(t,x))\;dx.$$
 Besides by virtue of the radial estimate \eqref{Bound} and the conservation laws \eqref{mass}-\eqref{hamil}, we have
 $$ \| u(t,\cdot) \|_{L^\infty(\R, L^\infty (|x| \geq 1)} \lesssim C(M,H),$$
 which gives rise to
 $$\int_{|x| \geq 1} \psi (u(t,x))\;dx  \lesssim  \int_{|x| \geq 1} {\rm e}^{4\pi |u(t,x)|^2} |u(t,x)|^6 \;dx \lesssim C(M,H) \int_{|x| \geq 1}  |u(t,x)|^2 \;dx \lesssim C(M,H).$$
 This ensures that
 $$  {\rm R}(t) \leq C(M,H).$$
 Thus taking   into account of \eqref{eqGV},  we deduce that
 \begin{eqnarray*}  8 \int^{t+\tau}_t \,\int_{|x| \leq 1} \,\psi (u(s,x))\, dx \, ds &=& \int^{t+\tau}_t \, \frac{d^2}{ds^2} V(s) \, ds  + \int^{t+\tau}_t \,  {\rm R}(s)\, ds  \\  &\leq & \Big[\frac{d}{ds} V(s)\Big]^{t+\tau}_t + \tau \, C(M,H) \leq C(M,H) \langle \tau \rangle.\end{eqnarray*}
 This ends the proof of the Corollary under the fact that
$ G(u) \lesssim \psi(u)$.
\end{proof}

  \medbreak

%@@@@@@@@@@@@@@@@@@@@@@@@@@@@@@@@@@@@%@@@@@@@@@@@@@@@@@@@@@@@@@@@@@@@@@@@@%@@@@@@@@
%@@@@@@@@@@@@@@@@@@@@@@@@@@@@@@@@@@@@%@@@@@@@@@@@@@@@@@@@@@@@@@@@@@@@@@@@@%@@@@@@@@
%@@@@@@@@@@@@@@@@@@@@@@@@@@@@@@@@@@@@%@@@@@@@@@@@@@@@@@@@@@@@@@@@@@@@@@@@@%@@@@@@@@

%%%%%%%%%%%%%%%%%%%%%%%%%%%%%%%%%%%%%%%

\subsection{$H^1$-scattering under smallness conditions}
The purpose of this paragraph is to establish that  solutions to  \eqref{NLS}-\eqref{def f} scatter on $\R_\pm$, provided that  the evolution of the Cauchy data under the flow of  the linear Schr\"odinger equation is sufficiently small  in $L^\infty(\R_\pm,L^4)$ and strictly less than $\frac 1 {\sqrt{4\pi}}$ in $L^\infty(\R_\pm,\wt{\mathcal L}) $.
More precisely, we have the following lemma which turns out to be essential in our strategy:
\begin{lem}
\label{lem:1}  For all $M> 0$ and all $\delta > 0$, there is $\beta_0 = \beta_0 (M,\delta) > 0$  such that if $u$ is the solution to the Cauchy problem
$$ \left\{
 \begin{aligned}
  &i\partial_t u+\Delta u = f(u), \\
  &u_{|t=0}=u_{0}\in H^1(\R^2),
 \end{aligned}
\right.
$$
with $f(u)$ given by  \eqref{def f}, and $u_{0}$ satisfying
\begin{eqnarray*} H( u_{0}) &\leq &  1,   \quad   \| u_{0}\|_{L^2} = M, \\
\|{\rm e}^{ i t \Delta} \, u_0\|_{L^\infty(\R_\pm,\wt{\mathcal L})} &\leq&  \frac 1 {\sqrt{4\pi (1+\delta)}} \quad \mbox{and} \quad  \|{\rm e}^{ i t \Delta} \, u_0\|_{L^\infty(\R_\pm,L^4)} \leq \beta_0, \end{eqnarray*}
then $$\|u\|_{L^4(\R_\pm, W^{1,4})} <  \infty  \quad \mbox{and}  \quad \|f(u)\| _{\mbox{\tiny ST}^*(\R_\pm)}<  \infty.$$
Moreover    there is a positive constant $C$  such that \footnote{In fact, one can prove that  for all $0< r  <1$  there is a positive constant $C_r$  such that
$$ \|u(t,\cdot)-{\rm e}^{ i t \Delta} \, u_0\| _{\mbox{\tiny ST}(\R_\pm)} \leq   C_r  \,\beta_0^{1+r}.$$ But we  fixed $r=\frac 1  2$  in \eqref{estboot}  to avoid heaviness.}
\begin{equation} \label{estboot} \|u(t,\cdot)-{\rm e}^{ i t \Delta} \, u_0\| _{\mbox{\tiny ST}(\R_\pm)} \leq   C  \,\beta_0^{\frac 3 2}.\end{equation}\end{lem}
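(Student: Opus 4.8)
The plan is to run a continuity (bootstrap) argument on $\R_+$, the case $\R_-$ being identical after time reversal. Write $v(t):={\rm e}^{it\Delta}u_0$ for the free evolution and $w:=u-v$, so that Duhamel's formula gives $w(t)=-i\int_0^t{\rm e}^{i(t-s)\Delta}f(u(s))\,ds$ with $w(0)=0$ and $i\partial_t w+\Delta w=f(u)$. For $T>0$ set $I_T:=[0,T]$ and $X(T):=\|w\|_{\mathrm{ST}(I_T)}$; this quantity is nondecreasing and continuous in $T$ with $X(0)=0$. Applying the inhomogeneous Strichartz estimate of Proposition \ref{Stri} to $w$ (and to $\nabla w$) with the admissible pairs $(4,4)$ and $(\infty,2)$ on the left gives $X(T)\le C\,\|f(u)\|_{\mathrm{ST}^*(I_T)}$. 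Everything thus reduces to bounding the dual norm $\|f(u)\|_{\mathrm{ST}^*(I_T)}$ by a small multiple of $X(T)$ plus a power of $\beta_0$, and then closing the continuity argument.

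Two a priori ingredients make the nonlinear estimate tractable. First, the integrand $F$ of the Hamiltonian is nonnegative (it is the tail ${\rm e}^{x}-1-x-x^2/2\ge0$ with $x=4\pi|u|^2$), so conservation of $H$ yields $\|\nabla u(t)\|_{L^2}^2\le H(u_0)\le 1$ for all $t$, while conservation of mass gives $\|u(t)\|_{L^2}=M$. Second, under the bootstrap hypothesis $X(T)\le\eta$ one has $\|w(t)\|_{H^1}\lesssim\eta$ on $I_T$, hence by the embedding \eqref{2D-embedbis} and the triangle inequality $\|u(t)\|_{\wt{\mathcal L}}\le\|v(t)\|_{\wt{\mathcal L}}+\|w(t)\|_{\wt{\mathcal L}}\le\frac{1}{\sqrt{4\pi(1+\delta)}}+C\eta\le\frac{1}{\sqrt{4\pi(1+\delta/2)}}$ for $\eta=\eta(\delta)$ small. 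Thus $u(t)$ satisfies, uniformly in $t\in I_T$, the hypotheses of Corollary \ref{uscons*} (applied with parameter $\delta/4$), which is exactly what permits integrating the weight ${\rm e}^{4\pi(1+\varepsilon)|u(t)|^2}$ against polynomial factors at a coefficient $4\pi(1+\varepsilon)$ slightly above the critical $4\pi$, with $\varepsilon\le\varepsilon_0(\delta/4)$.

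For the nonlinear estimate I would split $f(u)$ into its leading quintic part $8\pi^2|u|^4u$ and the higher–order remainder $R(u)=\big({\rm e}^{4\pi|u|^2}-1-4\pi|u|^2-8\pi^2|u|^4\big)u$. The quintic part and its gradient $\sim|u|^4\nabla u$ are controlled by standard subcritical Hölder/Strichartz estimates, the spare powers of $u$ being placed in $\|u\|_{L^\infty_tL^8_x}\lesssim\|u\|_{L^\infty_tL^4_x}^{1/2}\|\nabla u\|_{L^\infty_tL^2_x}^{1/2}$ (Gagliardo--Nirenberg), which is small since $\|u\|_{L^\infty_tL^4_x}\le\beta_0+X(T)$ and $\|\nabla u\|_{L^\infty_tL^2_x}\le1$; distributing the rest among the Strichartz norms $\|\nabla^j u\|_{L^4_tL^4_x}$ produces a factor $(\beta_0+X(T))^{3/2}$ (any exponent $1+r$, $r<1$, being attainable, cf.\ the footnote to \eqref{estboot}). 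The remainder $R(u)$ carries the genuine exponential growth, and its size $|R(u)|\lesssim|u|^7{\rm e}^{4\pi|u|^2}$ is tamed by Proposition \ref{mostrud} and Corollary \ref{uscons*}, which bound $\int{\rm e}^{4\pi(1+\varepsilon)|u|^2}|u|^p\,dx$ by subcritical $L^p$–norms, small because of the $L^4$–smallness. \textbf{The hard part is the gradient of the exponential remainder.} A naive Hölder splitting that pairs $\nabla u$ with ${\rm e}^{4\pi|u|^2}$ in $L^2_x$ (or $L^4_x$) forces the exponential to be raised to the power $2$ (resp.\ $4/3$), i.e.\ to a coefficient $8\pi$ (resp.\ $16\pi/3$), which the sub‑threshold bound on $\|u\|_{\wt{\mathcal L}}$ cannot absorb when $\delta$ is small; moreover peeling off ${\rm e}^{4\pi\|u(t)\|_{L^\infty}^2}$ via \eqref{embst} fails globally in time, since a merely $L^4_t$ bound on $\|u(t)\|_{L^\infty}$ gives no time–integrability of an exponential. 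The resolution is to keep the exponential \emph{spatial} and at a coefficient only slightly above $4\pi$, uniformly in $t$: I would estimate the forcing in a dual Strichartz norm $L^{\tilde q'}_tL^{\tilde r'}_x$ coming from an admissible pair $(\tilde q,\tilde r)$ with $\tilde r$ large, so that $\tilde r'$ is close to $1$, and distribute the derivative through a fractional Leibniz rule, pairing $|\nabla|^s u\in L^{2/s}_x$ (Sobolev, bounded by $\|\nabla u\|_{L^2}\le1$) for small $s$ with the exponential factor placed in $L^d_x$, $d\to1$, whence the coefficient $4\pi d\to4\pi$ lies within reach of Corollary \ref{uscons*}; the time–decay is then supplied by the small, time–integrable factors $\|u(t)\|_{L^4}$ coming from the higher–order vanishing of $R$. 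This exponential/gradient interplay is the single delicate estimate of the proof, the remaining steps being routine.

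Collecting the two contributions yields $\|f(u)\|_{\mathrm{ST}^*(I_T)}\le C(M,\delta)\,(\beta_0+X(T))^{3/2}$ and hence $X(T)\le C(M,\delta)\,(\beta_0+X(T))^{3/2}$. Choosing $\beta_0=\beta_0(M,\delta)$ small and running the continuity argument (for $\beta_0$ small the bootstrap hypothesis $X(T)\le\eta$ improves to $X(T)\le\tfrac\eta2$, so the set of admissible $T$ is open, closed and nonempty) gives $X(T)\le C\beta_0^{3/2}$ uniformly in $T$, that is \eqref{estboot}, and in particular $\|w\|_{\mathrm{ST}(\R_+)}\le C\beta_0^{3/2}$. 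Since $\|v\|_{L^4(\R_+,W^{1,4})}\lesssim M+\|\nabla u_0\|_{L^2}<\infty$ by Strichartz, this yields $\|u\|_{L^4(\R_+,W^{1,4})}<\infty$, and feeding this back into the nonlinear estimate gives $\|f(u)\|_{\mathrm{ST}^*(\R_+)}<\infty$. The argument on $\R_-$ is identical.
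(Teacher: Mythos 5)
Your overall architecture (Duhamel plus Strichartz, bootstrap on $X(T)=\|u-{\rm e}^{it\Delta}u_0\|_{\mbox{\tiny ST}(I_T)}$, conservation laws giving $\|\nabla u(t)\|_{L^2}\le 1$, propagation of the sub-threshold $\wt{\mathcal L}$ bound, Moser--Trudinger via Corollary \ref{uscons*}) coincides with the paper's. But precisely at the step you yourself single out as the hard part --- the gradient of the exponential remainder --- your proposed resolution does not work, and this is a genuine gap. First, the quantity you must control is the \emph{full} gradient: the bootstrap norm and the stated conclusion $\|f(u)\|_{\mbox{\tiny ST}^*}<\infty$ involve $\nabla w$ and $\nabla f(u)$, and the Strichartz estimates of Proposition \ref{Stri} are derivative-preserving, so an estimate of $|\nabla|^s f(u)$ with $s<1$ in a dual norm only returns control of $|\nabla|^s w$, never of $\nabla w\in L^4_tL^4_x$; moreover $\nabla f(u)=f'(u)\nabla u$ is a chain rule, not a product, so there is no fractional Leibniz rule by which the full derivative can be shifted onto a small fractional power of $u$. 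Second, once the full gradient is kept, $\nabla u$ is available only in $L^2_x$ and $L^4_x$; hence for \emph{any} dual exponent $\tilde r'$, H\"older forces the exponential factor into $L^q_x$ with $\frac1q=\frac1{\tilde r'}-\frac1a$, $a\in\{2,4\}$, so that $q\ge a'\ge\frac43$: letting $\tilde r'\to1$ pushes $q$ toward $a'$, not toward $1$. The exponential coefficient therefore remains at least $\frac{16\pi}{3}$, far beyond Corollary \ref{uscons*}, which tolerates only $4\pi(1+\varepsilon)$ with $\varepsilon\le\varepsilon_0(\delta)$ small. Getting the exponential into $L^d_x$ with $d\to1$ would require $\nabla u\in L^p_x$ with $p\to\infty$, which you do not have.

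The missing ingredient is the logarithmic inequality \eqref{H-mu} of Lemma \ref{Hmu}, which is exactly designed to rescue the $L^\infty$-peeling you dismiss. The paper writes, pointwise in $x$ and when $\|u(t,\cdot)\|_{L^\infty}\ge1$,
\begin{equation*}
{\rm e}^{8\pi|u|^2}|u|^8\le {\rm e}^{4\pi(1-\epsilon)\|u(t,\cdot)\|_{L^\infty}^2}\,{\rm e}^{4\pi(1+\epsilon)|u|^2}|u|^8;
\end{equation*}
the spatial factor, at coefficient $4\pi(1+\epsilon)$ only slightly above critical, is integrated by Corollary \ref{uscons*} thanks to the sub-threshold $\wt{\mathcal L}$ bound, while the $x$-independent factor is \emph{not} bounded through \eqref{embst} by ${\rm e}^{C\|u\|_{W^{1,4}}^2}$ (you are right that this is not time-integrable), but through \eqref{H-mu}: since $\|u(t,\cdot)\|_{H_\mu}^2\le 1+\mu^2M^2$ by the Hamiltonian and mass bounds, choosing $\mu$ small and $\lambda$ close to $\frac1\pi$ turns ${\rm e}^{4\pi(1-\frac\epsilon2)\|u(t,\cdot)\|_{L^\infty}^2}$ into a \emph{polynomial} bound $\lesssim 1+\|u(t,\cdot)\|_{W^{1,4}}^4$, whose degree $4$ matches exactly the $L^4_t$-integrability of $\|u(t,\cdot)\|_{W^{1,4}}$. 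This is how the paper reaches $\|\nabla f(u)\|_{L^{4/3}(I_T,L^{4/3})}\lesssim \|u\|^{2/p}_{L^\infty(I_T,L^4)}\|u\|^{3}_{L^4(I_T,W^{1,4})}$ and closes the bootstrap; without this (or an equivalent) device your argument cannot be completed. (A smaller, fixable point: in your quintic-term estimate, bounding $\|u\|_{L^8}^4\lesssim\|u\|_{L^4}^2\|\nabla u\|_{L^2}^2$ discards the $W^{1,4}$ factors needed for a $T$-uniform time integration; one should instead use $\|u\|_{L^8}^4\le\|u\|_{L^4}^2\|u\|_{L^\infty}^2\lesssim\|u\|_{L^4}^2\|u\|_{W^{1,4}}^2$, so that three powers of $\|u(t,\cdot)\|_{W^{1,4}}$ survive the H\"older step.)
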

\begin{rem}
Note that \eqref{estboot} implies in a standard way the existence of $v_{\pm}$ in  $H_{rad}^1(\R^2)$ such that
$$
\|u (t, \cdot) - {\rm e}^{ i t \Delta} \,v_{\pm}\|_{H^1}\stackrel{t\to\pm\infty}\longrightarrow 0.
$$
\end{rem}
\begin{proof}
 The proof of Lemma \ref{lem:1} is based on the following bootstrap result.
\begin{lem}
\label{lem:boot}
Under the assumptions of Lemma \ref{lem:1},   there exist $\beta_0 = \beta_0 (M,\delta) > 0$ and $C = C (M,\delta) > 0$ such that for any $\beta \leq \beta_0$ and any real $T $,   if
\begin{equation} \label{estboot1} \|u(t,\cdot)-{\rm e}^{ i t \Delta} \, u_0\| _{\mbox{\tiny ST}( I_T)} \leq   \beta,\end{equation}
then\begin{equation} \label{estboot2} \|u(t,\cdot)-{\rm e}^{ i t \Delta} \, u_0\| _{\mbox{\tiny ST}(I_T)} \leq   C  \,\beta^{\frac 3 2},\end{equation}
where $I_T:= [0,T]$ if $T \geq 0$ and $I_T:= [T,0]$ if $T \leq 0$.
\end{lem}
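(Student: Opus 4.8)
The plan is to estimate the Duhamel term $w := u - {\rm e}^{it\Delta}u_0$, which solves $i\partial_t w + \Delta w = f(u)$ with $w_{|t=0}=0$, directly by Strichartz estimates. Since both $(4,4)$ and $(\infty,2)$ are admissible pairs, Proposition \ref{Stri} applied with vanishing initial data gives, for \emph{any} admissible pair $(\tilde q,\tilde r)$,
\[
\|w\|_{\mathrm{ST}(I_T)} \leq C\,\sup_{j\in\{0,1\}}\|\nabla^j f(u)\|_{L^{\tilde q'}(I_T,L^{\tilde r'})}.
\]
Everything thus reduces to placing $f(u)$ and $\nabla f(u)$ in suitable dual Strichartz norms while exhibiting a gain $\beta^{3/2}$; the freedom to choose $(\tilde q,\tilde r)$ (in particular $\tilde r$ large, so that $\tilde r'$ is just above $1$) will be decisive.

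First I would propagate the hypotheses to $u$ on the whole slab $I_T$. Since $F\ge 0$ in \eqref{hamil}, conservation of the Hamiltonian gives $\|\nabla u(t)\|_{L^2}^2\le H(u_0)\le 1$, and conservation of mass gives $\|u(t)\|_{L^2}=M$. Writing $u(t)={\rm e}^{it\Delta}u_0+w(t)$ and using \eqref{2D-embedbis} together with $\|w(t)\|_{H^1}\le C\|w\|_{\mathrm{ST}(I_T)}\le C\beta$, one obtains $\|u(t)\|_{\wt{\mathcal L}}\le \frac{1}{\sqrt{4\pi(1+\delta)}}+C\beta \le \frac{1}{\sqrt{4\pi(1+2\delta')}}$ with $\delta'=\delta/4$, provided $\beta_0=\beta_0(M,\delta)$ is small. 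Hence, for each fixed $t$, the function $u(t)$ satisfies exactly the hypotheses of Corollary \ref{uscons*}, the tool that lets me absorb the exponential at a subcritical constant. I would also record the high--integrability norms that carry no smallness: $\|u\|_{L^4(I_T,L^8)}\lesssim M^{3/4}$ from the a priori estimate \eqref{estpriori}, and $\|\nabla u\|_{L^4(I_T,L^4)}\lesssim 1+\beta$ from Strichartz, together with the \emph{small} norm $\|u\|_{L^\infty(I_T,L^4)}\le \beta_0+C\beta$ (Gagliardo--Nirenberg for $w$). These are the quantities from which the final power of $\beta$ is built by interpolation in time.

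For the nonlinearity I would split space using the radial pointwise bounds \eqref{Bound}, \eqref{redestl4bis}, which give $|u(t,x)|\le C(M)|x|^{-1/2}$ and $|u(t,x)|\le C|x|^{-1/3}\|u(t)\|_{L^4}^{2/3}$, so that $|u(t,x)|\lesssim 1$ — and thus ${\rm e}^{4\pi|u|^2}$ is bounded — on all of $\{|x|\ge r_0\}$ with $r_0\sim\beta_0^{2}$. On this outer region $f(u)$ and $\nabla f(u)$ reduce to genuine power nonlinearities, $\lesssim|u|^5$ and $\lesssim|u|^2|\nabla u|$, which I would estimate in $L^{4/3}(I_T,L^{4/3})$ by Hölder, feeding one or two powers of the small norm $\|u\|_{L^\infty(I_T,L^4)}$ against the bounded norms $\|u\|_{L^4(I_T,L^8)}$ and $\|\nabla u\|_{L^4(I_T,L^4)}$; interpolating the small $L^\infty_tL^4_x$ norm against $L^4_tL^4_x$ in time then yields a bound $C(M,\delta)\beta^{1+r}$, matching the footnote. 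On the central ball $\{|x|\le r_0\}$, where $|u|$ genuinely grows, I would take the admissible source pair with $\tilde r$ large so that $\tilde r'\le 1+\varepsilon$: then ${\rm e}^{4\pi|u|^2}\in L^{\tilde r'}$ stays below the Moser--Trudinger threshold and Corollary \ref{uscons*} collapses $\|f(u(t))\|_{L^{\tilde r'}(|x|\le r_0)}$ into a sum of pure powers $\|u(t)\|_{L^{p}}^{5}$.

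The main obstacle, and the step I expect to be genuinely delicate, is the contribution of $\nabla f(u)$ from the central ball. There $|\nabla f(u)|\lesssim {\rm e}^{4\pi|u|^2}(|u|^2+|u|^4)|\nabla u|$, and since $\nabla u$ lives only in $L^2_x\cap L^4(I_T,L^4)$, no Hölder splitting can keep the exponential in a spatial Lebesgue exponent $\le 1+\varepsilon$: giving away even $L^4_x$ to $\nabla u$ forces ${\rm e}^{4\pi|u|^2}$ into $L^{s}_x$ with $s\ge \tfrac43$, i.e.\ an exponential constant $\ge\tfrac{16\pi}{3}>4\pi$, and the same failure occurs for every admissible source pair and for the endpoint $L^1_tL^2_x$. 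This is precisely where the strict sub-criticality $\|{\rm e}^{it\Delta}u_0\|_{L^\infty(\R_\pm,\wt{\mathcal L})}\le\frac{1}{\sqrt{4\pi(1+\delta)}}$ must be used: being bounded away from $\frac{1}{\sqrt{4\pi}}$ excludes the Moser-type concentration of \eqref{formprofiletild} (cf.\ Corollary \ref{cormoser}), which I would exploit to show that ${\rm e}^{4\pi|u|^2}$ is large only on a set of small measure in $I_T\times\{|x|\le r_0\}$ and that the gradient energy $\|\nabla u(t)\|_{L^2(|x|\le r_0)}$ carried there is correspondingly small, so that this term too can be absorbed with a gain. Once the gradient term is tamed, summing the three regions gives $\sup_{j}\|\nabla^j f(u)\|_{L^{\tilde q'}(I_T,L^{\tilde r'})}\le C(M,\delta)\beta^{3/2}$, whence $\|w\|_{\mathrm{ST}(I_T)}\le C\beta^{3/2}$ by the Strichartz inequality above, closing the bootstrap.
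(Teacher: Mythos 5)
Your overall architecture (Duhamel plus Strichartz, propagation of the conservation laws and of the $\wt{\mathcal L}$-bound, reduction to dual-Strichartz estimates on $f(u)$ and $\nabla f(u)$) matches the paper's, but at the step you yourself single out as decisive --- the contribution of $\nabla f(u)$ where $|u|$ is large --- your proposal has a genuine gap. You correctly observe that H\"older alone cannot place ${\rm e}^{4\pi|u|^2}$ in a spatial Lebesgue exponent close to $1$ once a power of integrability has been ceded to $\nabla u$, but the fix you sketch (using the strict sub-criticality in $\wt{\mathcal L}$ to ``exclude Moser-type concentration'' and conclude that ${\rm e}^{4\pi|u|^2}$ is large only on a set of small measure with small gradient energy) is not an argument: the profile-decomposition statements \eqref{formprofiletild} and Corollary \ref{cormoser} are qualitative facts about \emph{sequences}, and they provide no quantitative mechanism producing the $\beta^{3/2}$ gain needed to close a bootstrap on a fixed time slab. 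As written, the central estimate of the lemma is asserted, not proved.

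The idea you are missing is the one the paper uses, and it bypasses the obstruction entirely: extract the supercritical part of the exponential through the sup norm, writing pointwise
\begin{equation*}
{\rm e}^{8\pi|u(t,x)|^2}\leq {\rm e}^{4\pi(1-\epsilon)\|u(t,\cdot)\|_{L^\infty}^2}\,{\rm e}^{4\pi(1+\epsilon)|u(t,x)|^2}.
\end{equation*}
The second factor is only slightly supercritical and is handled by Corollary \ref{uscons*}, which is exactly where the hypothesis $\|u(t,\cdot)\|_{\wt{\mathcal L}}\leq \Theta/\sqrt{4\pi}$, $\Theta<1$, enters. The first factor is a constant in $x$, and the logarithmic inequality \eqref{H-mu} of Lemma \ref{Hmu}, combined with the embedding \eqref{embst} $W^{1,4}(\R^2)\hookrightarrow{\mathcal C}^{1/2}(\R^2)$ and the bound $\|u(t,\cdot)\|_{H_\mu}^2\leq 1+\mu^2M^2$ (with $\mu$ small, $\lambda$ close to $1/\pi$), converts it into a \emph{polynomial} factor $\lesssim 1+\|u(t,\cdot)\|_{W^{1,4}}^{4-\epsilon'}$, which is then integrable in time since $u\in L^4(I_T,W^{1,4})$ with norm $\leq C(M)+\beta$. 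This yields $\|\nabla f(u)\|_{L^{4/3}(I_T,L^{4/3})}\lesssim \|u\|_{L^\infty(I_T,L^4)}^{2/p}\|u\|_{L^4(I_T,W^{1,4})}^{3}$ for $p>1$ close to $1$, and the small factor $\|u\|_{L^\infty(I_T,L^4)}\leq C\beta$ supplies the $\beta^{3/2}$. Note also that this argument needs no spatial cut-off and no radial pointwise bounds \eqref{Bound}, \eqref{redestl4bis}, so it works for general $H^1$ data, whereas your outer/inner decomposition ties the lemma to the radial setting unnecessarily; the standard $(4,4)$ dual pair suffices, and the flexibility in $(\tilde q,\tilde r)$ you hoped would be decisive is never needed.
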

\begin{proof} [Proof of Lemma \ref{lem:boot}]
Let us denote $ \|{\rm e}^{ i t \Delta} \, u_0\|_{L^\infty(\R_\pm,L^4)} =   \beta$.  In view of the  triangle inequality, Estimate  \eqref{estboot1}  ensures that
\begin{eqnarray} \label{bootsr1} \|u\|_{L^4(I_T, W^{1,4})} &\leq&  C(M) + \beta \quad \mbox{and} \quad  \\ \|u\|_{L^\infty(I_T, L^4)} &\leq& \beta + C \|u(t,\cdot)-{\rm e}^{ i t \Delta} \, u_0\| _{L^\infty(I_T,H^1)} \leq C \beta,   \label{bootsr2}\end{eqnarray}
which by virtue of  the  triangle and Sobolev inequalities leads to
\begin{equation} \label{firstbs} \|u\|_{L^\infty(I_T,\wt{\mathcal L})} \leq  \frac 1 {\sqrt{4\pi (1+\delta)}} + C \beta \leq  \frac{\Theta}{\sqrt{4\pi}}\, , \end{equation}
with $ \Theta < 1$   provided that $\beta$ is sufficiently small. \\

\noindent Let us also notice that the conservation laws \eqref{mass}-\eqref{hamil} imply that
$$  \| \nabla u(t,\cdot)\|_{L^2} \leq 1 \quad \mbox{and} \quad \| u(t,\cdot)\|_{L^2} = M, $$
and as $u$ solves the Cauchy problem \eqref{NLS}-\eqref{def f}, we have
$$ u(t,\cdot) = {\rm e}^{ i t \Delta} \, u_0 -i \int^t_{0} {\rm e}^{ i (t-s) \, \Delta}\, f(u(s,\cdot))\, ds.$$ Thus thanks to   Strichartz estimate \eqref{str}, we get
\begin{equation} \label{apst}  \|u(t,\cdot)-{\rm e}^{ i t \Delta} \, u_0\| _{\mbox{\tiny ST}(I_T)} \lesssim \|f(u)\| _{\mbox{\tiny ST}^*(I_T)}.\end{equation}
But  on the one hand $|f(u)| \lesssim  |u|^5 {\rm e}^{4 \pi
|u|^2}$,   thus for any   $0< \epsilon < 1$
\begin{eqnarray*}\|f(u(t,\cdot))\|^{\frac 4 3}_{ L^{\frac 4 3}(\R^2)} &\lesssim &\int_{\R^2}\,{\rm e}^{\frac {16 \pi} 3
|u(t,x)|^2} |u(t,x)|^{\frac {20} 3} \, dx \\ &\lesssim & {\rm e}^{4 \pi (1- \epsilon )\|u(t,\cdot)\|_{L^\infty}^2}  \int_{\R^2}\,{\rm e}^{4 \pi  (\frac 13 + \epsilon )
|u(t,x) |^2} |u(t,x)|^{\frac {20} 3} \, dx. \end{eqnarray*}
Choosing $\ds \epsilon < \frac 2 3$, we obtain in view of  Proposition \ref{usconsbis}
\begin{eqnarray}\nonumber \|f(u(t,\cdot))\|^{\frac 4 3}_{ L^{\frac 4 3}(\R^2)} &\lesssim&  {\rm e}^{4 \pi (1- \epsilon )\|u(t,\cdot)\|_{L^\infty}^2}  \int_{\R^2}\, |u(t,x)|^{\frac {20} 3} \, dx  \\ &\lesssim &  {\rm e}^{4 \pi (1- \epsilon )\|u(t,\cdot)\|_{L^\infty}^2} \|u(t,\cdot)\|_{L^\infty}^{\frac 8 3} \,\|u(t,\cdot)\|^4_{ L^4(\R^2)}\label{estsmall1},\end{eqnarray}
which leads to
\begin{equation} \label{firster} \|f(u(t,\cdot))\|^{\frac 4 3}_{ L^{\frac 4 3}(\R^2)} \lesssim \|u(t,\cdot)\|^{4}_{W^{ 1, 4 }(\R^2)} \,\|u(t,\cdot)\|^{\frac 8 3}_{ L^4(\R^2)}.\end{equation}
Indeed in the case when $\|u(t,\cdot)\|_{L^\infty} \leq 1$,   the estimate  \eqref{estsmall1} writes
$$ \|f(u(t,\cdot))\|^{\frac 4 3}_{ L^{\frac 4 3}(\R^2)} \lesssim  \|u(t,\cdot)\|_{L^\infty}^{\frac 8 3} \,\|u(t,\cdot)\|^4_{ L^4(\R^2)}, $$
which  thanks to \eqref{defS'h} and \eqref{embst} entails that
$$ \|f(u(t,\cdot))\|^{\frac 4 3}_{ L^{\frac 4 3}(\R^2)} \lesssim  \|u(t,\cdot)\|^{\frac 8 3}_{W^{ 1, 4 }(\R^2)} \, \|u(t,\cdot)\|^4_{ L^4(\R^2)}\lesssim \|u(t,\cdot)\|^{4}_{W^{ 1, 4 }(\R^2)} \,\|u(t,\cdot)\|^{\frac 8 3}_{ L^4(\R^2)}. $$
Furthermore in the case when $\|u(t,\cdot)\|_{L^\infty} \geq 1$, it follows  from \eqref{estsmall1} that
$$ \|f(u(t,\cdot))\|^{\frac 4 3}_{ L^{\frac 4 3}(\R^2)} \lesssim   {\rm e}^{4 \pi (1- \frac \epsilon 2)\|u(t,\cdot)\|_{L^\infty}^2}  \,\|u(t,\cdot)\|^4_{ L^4(\R^2)},$$
which in light of the logarithmic inequality (\ref{H-mu}) implies  for any fixed
 $\lambda>\frac{1}{\pi}$,
\begin{equation} \label{logap*}\|f(u(t,\cdot))\|^{\frac 4 3}_{L^{\frac 4 3}(\R^2)} \lesssim
 \|u(t,\cdot)\|^4_{ L^4(\R^2)} \left(1+\frac{\|u(t,\cdot)\|_{{\cC}^{\frac 1 2 }}}{\|u(t,\cdot)\|_{H_\mu}}\right)^{4 \pi (1- \frac \epsilon 2) \lambda
\|u(t,\cdot)\|_{H_\mu}^2}\cdot \end{equation}
Recalling that
$$ \|u(t,\cdot)\|_{H_\mu}^2 = \|\nabla u(t,\cdot)\|_{L^2}^2+\mu^2 \|u(t,\cdot)\|_{L^2}^2\leq 1+\mu^2 M^2,
$$
we infer  in view of \eqref{embst}
  that $\mu > 0$ and  $\lambda>\frac{1}{\pi}$ can be fixed so that
$$\|f(u(t,\cdot))\|^{\frac {4 } 3}_{ L^{\frac 4 3}(\R^2)} \lesssim   \|u(t,\cdot)\|^4_{ L^4(\R^2)} \Big(1+\|u(t,\cdot)\|^{4}_{W^{1,4}(\R^2)}\Big).$$
Since we are dealing  with the case $\|u(t,\cdot)\|_{L^\infty} \geq 1$,
this  ensures that
$$\|f(u(t,\cdot))\|^{\frac {4 } 3}_{ L^{\frac 4 3}(\R^2)} \lesssim   \|u(t,\cdot)\|^4_{ L^4(\R^2)} \|u(t,\cdot)\|^{4}_{W^{1,4}(\R^2)},$$
which ends the proof of  \eqref{firster} and leads by a time integration to
\begin{equation} \label{firstterm}  \|f(u)\|_{L^\frac4 3(I_T, L^\frac4 3)} \lesssim \, \|u\|^2_{L^\infty(I_T, L^4)} \|u\|^{3}_{L^4(I_T, W^{1,4})}.\end{equation}

\bigbreak

On the other hand observing that  $|\nabla f(u)| \lesssim  {\rm e}^{4 \pi |u|^2}\, |\nabla u|\, | u|^4$ and applying  H\"older inequality, we get
$$  \| \nabla f(u)\|^{\frac 4 3}_{ L^\frac4 3(I_T, L^\frac4 3)} \leq \| \nabla u\|^{\frac 4 3}_{ L^4(I_T, L^4)} \Big(\int_{\R \times \R^2}\,{\rm e}^{8 \pi
|u(t,x)|^2} |u(t,x)|^{8} \, dx \,dt \Big)^{\frac 2 3 }.$$ Arguing as above,
 we infer that if $\|u(t,\cdot)\|_{L^\infty} \leq 1$ then
\begin{eqnarray} \nonumber \int_{ \R^2}\,{\rm e}^{8 \pi
|u(t,x)|^2} |u(t,x)|^{8} \, dx &\lesssim & \int_{\R^2}\, |u(t,x)|^{8} \, dx  \\ &\lesssim &  \|u(t,\cdot)\|^4_{ L^4(\R^2)} \|u(t,\cdot)\|^{4}_{W^{1,4}(\R^2)}.\label{int1}\end{eqnarray}
 Moreover if $\|u(t,\cdot)\|_{L^\infty} \geq 1$,    we obtain for any $0< \epsilon < 1$
 $$ \int_{ \R^2}\,{\rm e}^{8 \pi
|u(t,x)|^2} |u(t,x)|^{8} \, dx \leq  {\rm e}^{4 \pi (1- \epsilon )\|u(t,\cdot)\|_{L^\infty}^2}  \int_{ \R^2}\,{\rm e}^{4 \pi (1+ \epsilon )
|u(t,x)|^2} |u(t,x)|^{8} \, dx.$$
 Taking advantage of Corollary \ref{uscons*}, we get
\begin{eqnarray*} \int_{ \R^2}\,{\rm e}^{8 \pi
|u(t,x)|^2} |u(t,x)|^{8} \, dx  &\lesssim &  {\rm e}^{4 \pi (1- \epsilon )\|u(t,\cdot)\|_{L^\infty}^2}  \left( \int_{ \R^2} |u(t,x)|^{8} dx + \Big(\int_{ \R^2}  |u(t,x)|^{8p}  dx\Big)^{\frac 1 p}\right)  \\ &\lesssim &  {\rm e}^{4 \pi (1- \epsilon )\|u(t,\cdot)\|_{L^\infty}^2} \left(  \|u(t,\cdot)\|_{L^4}^{4} \|u(t,\cdot)\|^4_{ L^\infty} + \|u(t,\cdot)\|_{L^4}^{\frac 4 p} \|u(t,\cdot)\|^{8-\frac 4 p}_{ L^\infty}\right)\\ &\lesssim &  {\rm e}^{4 \pi (1- \frac \epsilon 2 )\|u(t,\cdot)\|_{L^\infty}^2} \|u(t,\cdot)\|_{L^4}^{\frac 4 p},\end{eqnarray*}
provided that $ \epsilon $ is chosen sufficiently small. \\

 Applying  the  above lines of reasoning, we get  by virtue of the logarithmic inequality (\ref{H-mu}) still again in the case when $\|u(t,\cdot)\|_{L^\infty} \geq 1$
\begin{equation} \label{int2}  \int_{ \R^2}\,{\rm e}^{8 \pi
|u(t,x)|^2} |u(t,x)|^{8} \, dx \lesssim   \|u(t,\cdot)\|^{4}_{W^{1,4}} \|u(t,\cdot)\|_{L^4}^{\frac 4 p}. \end{equation}
Combining \eqref{int1} and \eqref{int2} we find that for any $p > 1 $, with $ p-1$ sufficiently small
$$  \| \nabla f(u)\|_{L^\frac4 3(I_T, L^\frac4 3)} \lesssim \, \|u\|^{\frac 2 p}_{L^\infty(I_T, L^4)} \|u\|^{3}_{L^4(I_T, W^{1,4})}.$$
Therefore, there is a positive constant $C = C (M,\delta)$ so that
\begin{equation} \label{secterm}  \|f(u)\| _{\mbox{\tiny ST}^*(I_T)} \leq C  \,  \|u\|^{3}_{L^4(I_T, W^{1,4})}\, \,\|u\|^{\frac 3 2}_{L^\infty(I_T, L^4)}.\end{equation}
Thus taking into account of \eqref{apst}, we deduce that
$$  \|u(t,\cdot)-{\rm e}^{ i t \Delta} \, u_0\| _{\mbox{\tiny ST}(I_T)} \leq C  \,  \|u\|^{3}_{L^4(I_T, W^{1,4})}\,\|u\|^{\frac 3 2}_{L^\infty(I_T, L^4)}, $$
which  together with \eqref{bootsr1} and \eqref{bootsr2} lead to \eqref{estboot1},  provided that $\beta$  is sufficiently small. This ends the proof of  the lemma.
\end{proof}

\medskip

By the standard continuity arguments, we deduce from Lemma \ref{lem:boot} that for $\beta_0$  sufficiently small  $\|u\| _{\mbox{\tiny ST}(\R_\pm)}   \lesssim \beta_0$. This implies in light of \eqref{secterm} that
$$ \|f(u)\| _{\mbox{\tiny ST}^*(\R_\pm)} \leq C  \beta^{\frac 3 2}_0,$$
which achieves the proof of Lemma \ref{lem:1}.
\end{proof}

  \medbreak
%%%%%%%%%%%%%%%%%%%%%%%%%%%%%%%%%%%%%
%%%%%%%%%%%%%%%%%%%%%%%%%%%%%%%%%%%%%

     %%%%%%%%%%%%%%%%%%%%%%%%%%%%%%%%%%%%%%%%%%%%%

%%%%%%%%%%%%%%%%%%%%%%%%%%%%%%%%%%%%%%%%%%%%%%%%%%%

\section{Proof of  Theorem  \ref{Main-NLS}} \label{sec:mainth}
In this section, we shall demonstrate  in the critical case that  any solution $u$ to the Cauchy problem \eqref{NLS}-\eqref{def f} belongs to $L^4(\R, W^{1,4}(\R^2))$.  By considering $\overline u$   the conjugate of $u$, one can reduce the proof of  \eqref{Stg} to $\R_+$.  As it is mentioned in Section \ref{schgen}, this is achieved  in three steps. A first step where for any positive real sequence $(t_n)_{n \geq 0}$   tending to $+ \infty$, the strong  convergence to zero of the sequence $({\rm e}^{ i t \Delta}\, u(t_n,\cdot))_{n \geq 0}$ in $L^\infty(\R_+, L^4( \R^2))$ is settled.
A second step where a lack of compactness at infinity is derived, and a third step where the proof of the result  is completed.

\subsection{First step: strong convergence to zero in $L^\infty(\R_+, L^4( \R^2))$}
\begin{prop} \label{propstep1} Let  $u$ be a solution to \eqref{NLS}-\eqref{def f} and $(t_n)_{n \geq 0}$  be a positive real sequence tending to $+ \infty$.
Then
\begin{equation}
\label{eq:st1} \| {\rm e}^{ i t \Delta}\, u(t_n,\cdot) \| _{L^\infty(\R+, L^p( \R^2))}\stackrel{n\to\infty} \longrightarrow  0,
\end{equation}
for all $2< p< \infty$.
 \end{prop}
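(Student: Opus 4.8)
The plan is to show that the sequence $w_n:=u(t_n,\cdot)$ becomes asymptotically negligible for the purpose of its free evolution, measured in $L^\infty(\R_+,L^p)$. Since $(w_n)$ is bounded in $H^1_{rad}(\R^2)$ (by conservation of mass and the bound $H(u)=1$), up to a subsequence it converges weakly to some limit $w$ in $H^1$. The key point will be that because $t_n\to+\infty$, this weak limit must be zero: morally, the solution $u(t,\cdot)$ disperses as $t\to+\infty$, so no fixed $H^1$ profile can survive along $(w_n)$. To justify this rigorously I would invoke the a priori estimate \eqref{estpriori}, namely $\|u\|_{L^4(\R,L^8)}\lec \|u\|_{L^\infty(\R,L^2)}^{3/4}\|\nabla u\|_{L^\infty(\R,L^2)}^{1/4}<\infty$, which forces $u(t,\cdot)\to 0$ in $L^8$ along a sequence of times going to infinity; combined with the compact embedding $H^1_{rad}(\R^2)\hookrightarrow L^p(\R^2)$ of Lemma \ref{compacity} (for $2<p<\infty$), any weak-$H^1$ limit of $(w_n)$ is then strongly the $L^p$-limit, and must vanish.

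\emph{The strategy for the main estimate.} Once I know $w_n\rightharpoonup 0$ in $H^1_{rad}$ with $\|w_n\|_{L^p}\to 0$ for every $2<p<\infty$ (by compactness of the radial embedding), I would control $\|{\rm e}^{it\Delta}w_n\|_{L^\infty(\R_+,L^p)}$ by splitting the free evolution into small and large times. For large $|t|$ the dispersive estimate \eqref{disestgen} gives decay $\|{\rm e}^{it\Delta}w_n\|_{L^p}\lesssim |t|^{-(1-2/p)}\|w_n\|_{L^{p'}}$, so on a region $t\geq T$ this is uniformly small provided $T$ is large, using that $\|w_n\|_{L^{p'}}$ is bounded. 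For the remaining compact time interval $0\leq t\leq T$, I would argue that ${\rm e}^{it\Delta}$ is a strongly continuous family of unitary operators and that the weak convergence $w_n\rightharpoonup 0$ together with the local compactness upgrades to strong convergence ${\rm e}^{it\Delta}w_n\to 0$ in $L^p$, uniformly in $t\in[0,T]$. This uniformity over a compact interval is where one must be careful; I would establish it by an equicontinuity-in-$t$ argument for the family $(t\mapsto {\rm e}^{it\Delta}w_n)$, which follows from the uniform $H^1$ bound (controlling the modulus of continuity in $t$ through $\|\partial_t{\rm e}^{it\Delta}w_n\|=\|\Delta{\rm e}^{it\Delta}w_n\|$ in a suitable negative-order norm), reducing the uniform statement to finitely many pointwise-in-$t$ statements.

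\emph{The main obstacle.} I expect the hardest part to be precisely the interplay between the two time regimes: one needs the dispersive decay to handle $t\to+\infty$, but the dispersive estimate costs an $L^{p'}$ norm of the \emph{data} $w_n$, and $p'<2$, so one cannot directly use boundedness in $L^2$. The radial structure is essential here, since the pointwise decay estimates of Lemma \ref{apa3} (especially \eqref{redestl4bis}) let one control low-exponent norms away from the origin and recover the needed $L^{p'}$-type control, exploiting the fact that radial $H^1$ functions are bounded away from the origin. The technical crux is therefore to combine: (i) the compact embedding to kill the profile on compact time intervals, (ii) dispersion to kill the tail in time, and (iii) the radial pointwise bounds to make the $L^{p'}$ quantity appearing in the dispersive estimate harmless. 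I would finally assemble these by choosing $T=T(\varepsilon)$ large to handle $t\geq T$ uniformly in $n$, then $n$ large to handle $0\leq t\leq T$, yielding $\|{\rm e}^{it\Delta}w_n\|_{L^\infty(\R_+,L^p)}\to 0$ and hence \eqref{eq:st1}.
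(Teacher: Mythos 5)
Your proposal has two genuine gaps, one in each pillar of the argument. The first concerns your claim that the weak $H^1$-limit of $w_n=u(t_n,\cdot)$ vanishes. The a priori estimate \eqref{estpriori} gives $\|u\|_{L^4(\R,L^8)}<\infty$, hence $\|u(s_k,\cdot)\|_{L^8}\to 0$ along \emph{some} sequence $s_k\to+\infty$, but it says nothing along the \emph{given} arbitrary sequence $(t_n)$: finiteness of the time integral is perfectly compatible with $\|u(t_n,\cdot)\|_{L^8}\geq c>0$ for your particular times. This is precisely the difficulty that the paper's Lemma \ref{lemet1} is built to overcome: assuming a nonzero weak limit $\varphi$, the paper runs a nonlinear perturbation argument (a bootstrap with Strichartz estimates, Moser--Trudinger inequalities and the logarithmic inequality \eqref{H-mu}) showing that on the intervals $[t_n,t_n+T]$ the solution stays close to the free evolution of $u(t_n,\cdot)$; since the free evolution of $\varphi$ keeps an $L^8$ norm of size $\eta>0$ on a fixed time window, this forces $\|u\|_{L^4([t_n,t_n+T],L^8)}\gtrsim \eta\, T^{1/4}$ for all $n$, contradicting the convergence of the integral defining $\|u\|_{L^4(\R,L^8)}$. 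Without an argument of this kind, which transfers information about the nonlinear flow to the specific times $t_n$, the assertion $w_n\rightharpoonup 0$ is unsupported.

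The second gap is in your large-time regime and is not repairable in the form you propose: the dispersive estimate \eqref{disestgen} costs $\|w_n\|_{L^{p'}}$ with $p'<2$, and this quantity is in general infinite for $H^1_{rad}(\R^2)$ functions, since $H^1(\R^2)$ embeds in no $L^q$ with $q<2$. The radial bounds of Lemma \ref{apa3} cannot rescue this: the decay $|u(x)|\lesssim |x|^{-1/2}$ from \eqref{Bound} gives $\int_{|x|\geq 1}|x|^{-p'/2}\,dx=\infty$ for every $p'<2$, so item (iii) of your plan fails. The paper avoids applying dispersion to the data $u(t_n,\cdot)$ altogether. In Lemma \ref{lemet2} it writes ${\rm e}^{i\tau_n\Delta}u(t_n,\cdot)$ via the Duhamel formula from time $0$, so that the linear piece is ${\rm e}^{i(\tau_n+t_n)\Delta}u_0$ with the \emph{fixed} datum $u_0$, to which dispersion applies harmlessly; the Duhamel integral is then split by a spatial cutoff: away from the origin the radial $L^\infty$ bound makes the nonlinearity effectively quintic, so its $L^1_x$ norm is $\lesssim\|u(s,\cdot)\|_{L^8}^4$ and the a priori estimate plus dispersion yield decay of order $1/\tau_n$, while the residual exponential part near the origin is handled weakly, by testing against Schwartz functions, non-stationary phase, and the virial-type bound of Corollary \ref{newesvir}. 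Some version of this structure (or another mechanism that never invokes an $L^{p'}$ norm of $u(t_n,\cdot)$ with $p'<2$) is needed for your argument to close.
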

    \begin{proof}
 Proposition \ref{propstep1} stems from the following lemmas that we admit for a while.
     \begin{lem} \label{lemet1}
  If $u$ is a solution to \eqref{NLS}-\eqref{def f} and $(t_n)_{n \geq 0}$  is a positive real sequence tending to $+ \infty$, then
\begin{equation}
\label{eq:et1}  u(t_n,\cdot) \stackrel{n\to\infty}\rightharpoonup 0,  \quad \mbox{in}  \quad H^1( \R^2).
\end{equation}
  \end{lem}
      \begin{lem} \label{lemet2}
  Let $u$ be a solution to \eqref{NLS}-\eqref{def f},  $(t_n)$ and $(\tau_n)$ two  positive real sequences tending to $+ \infty$, then
\begin{equation}
\label{eq:et2}   \| {\rm e}^{ i \tau_n \Delta}\, u(t_n,\cdot) \| _{ L^p( \R^2)}\stackrel{n\to\infty} \longrightarrow  0,
\end{equation}
for all $2< p< \infty$.
  \end{lem}
   Before going into the proof of these fundamental  results, let us show how they  lead to Property \eqref{eq:st1}. For that purpose, we shall proceed by contradiction  assuming that the sequence  $(t_n)_{n \geq 0 }$ admits a subsequence $(t_{n_k})_{k \geq 0}$ and that  there exist    a  positive real sequence $(\tau_k)_{k \geq 0}$   and a positive real $\alpha_0$ such that for all $k \in \N$
  $$ \| {\rm e}^{ i \tau_k \Delta}\, u(t_{n_k},\cdot) \| _{ L^p( \R^2)} \geq \alpha_0.$$
   \noindent There are two possibilities up to extraction
\begin{enumerate}
\item $ \ds \tau_k \stackrel{k\to\infty}\longrightarrow   \tau_0 < \infty$  or else
\item $\ds \tau_k \stackrel{k\to\infty}\longrightarrow   + \infty$.\end{enumerate}
In the first case, the continuity of the flow implies that
$$  \| {\rm e}^{ i \tau_0 \Delta}\, u(t_{n_k},\cdot) \| _{ L^p( \R^2)} \gtrsim  \alpha_0,$$ which contradicts Lemma \ref{lemet1}  in light of Lemma \ref{compacity}, and the second case can not occur in view of Lemma  \ref{lemet2}.    \\

Now the heart of the matter consists to establish Lemmas \ref{lemet1} and \ref{lemet2}. Let us begin  by demonstrating the first lemma.

   \medskip

   \subsubsection*{Proof of Lemma \ref{lemet1}}
   To go to the proof of the result,  we shall proceed   by contradiction.
   Assume that there is a subsequence $(t_{n_k})_{k \geq 0}$ such that
   $$ u(t_{n_k},\cdot) \stackrel{k\to\infty}\rightharpoonup \varphi, \quad \mbox{in}  \quad H^1( \R^2),$$
  with  $\varphi \neq 0$ and for the shake of simplicity   denote $(t_{n_k})$ by $(t_n)$. This allows us to  write
   $$ u(t_n,\cdot) = \varphi + v_n, $$
   with $v_n \stackrel{n\to\infty}\rightharpoonup 0$ in $H^1( \R^2)$, which implies that
   $$ \|\nabla v_n \|^2 _{L^2} = \|\nabla u(t_{n},\cdot) \|^2 _{L^2} - \|\nabla \varphi \|^2 _{L^2} + \circ(1),\quad n\to\infty.  $$
   This  ensures the existence of a positive constant $\delta$ such that $\|\nabla v_n \| _{L^2} < 1 - 10\,  \delta$.
  By virtue of Lemma \ref{compacity}, one has
   \begin{equation}
\label{eq:v_nbis} \sup_{t\in [-1,1]} \| {\rm e}^{ i t \Delta}\, v_n\| _{L^p( \R^2)}\stackrel{n\to\infty} \longrightarrow  0, \end{equation}
for all $2< p< \infty$, which in view of Corollary \ref{cormoser} gives rise to
$$ \sup_{t\in [-1,1]} \| {\rm e}^{ i t \Delta}\, v_n\| _{\wt{\mathcal L}}  \leq \frac 1 {\sqrt{4\pi }}\|\nabla v_n \| _{L^2} + \circ(1).$$
 Thus for $n$ sufficiently large
 \begin{equation}
\label{eq:v_n} \sup_{t\in [-1,1]} \| {\rm e}^{ i t \Delta}\, v_n\| _{\wt{\mathcal L}}  \leq \frac {1-8 \delta} {\sqrt{4\pi }}\cdot  \end{equation}
 Besides, by density arguments one can  decompose $\varphi$ as follows:
 $$ \varphi:= \varphi_0 +  \varphi_1,$$
 where $\varphi_0 \in \cD$ and   $\| \varphi_1 \|_{H^1} \leq \delta$, which implies according to  Sobolev embedding \eqref{lackor2} that
 \begin{equation}
\label{eq:fi}\sup_{t\in [-1,1]} \| {\rm e}^{ i t \Delta}\, \varphi_1\| _{\wt{\mathcal L}}  \leq \frac { \delta} {\sqrt{4\pi }}\cdot \end{equation}
  Now our aim is to prove the existence of  $T(\delta) >0$, $\alpha(\delta) >0$ and $c(\delta) >0$  such that for any $n$ large enough,  we have $$ u(t,\cdot) = {\rm e}^{ i (t-t_{n}) \Delta}\, u(t_{n},\cdot)+  {\rm r}_n(t,\cdot),$$
   with  \begin{equation}\label{eqslab}  \|{\rm r}_n\|_{\mbox{\tiny ST}( [t_{n},t_{n}+T])} \leq c(\delta)  \, T^{\alpha(\delta)},  \end{equation}
   for any $0 \leq T \leq T(\delta) $.  \\

     \medskip
     We shall prove Claim \eqref{eqslab} by bootstrap argument, assuming  that for some  $T > 0$
     \begin{equation}\label{assum}  \|{\rm r}_n\|_{\mbox{\tiny ST}( I^n_T)} \leq \delta,  \end{equation}
     where $I^n_T:= [t_{n},t_{n}+T]$.  Actually under  Assumption \eqref{eqslab},   we have on the one hand
    \begin{equation} \label{bootstrap} \|u\|_{\mbox{\tiny ST}(I^n_T)}  \leq  \|u(t_n,.)\|_{H^1(\R^2)} +  \|{\rm r}_n\|_{\mbox{\tiny ST}(I^n_T)} \leq C(M,H)+ \|{\rm r}_n\|_{\mbox{\tiny ST}(I^n_T)}\leq C(M,H,\delta), \end{equation}
  and on the other hand
  we  may  decompose $u$ as follows
 $$ u(t,\cdot) = u_0(t,\cdot) + \wt u(t,\cdot),$$
 where $ \ds u_0(t,\cdot) =  {\rm e}^{ i (t-t_{n}) \Delta}\,\varphi_0$ and
  \begin{equation} \label{tildpetit} \|\wt u\| _{L^\infty(I^n_T,\wt{\mathcal L})}  \leq \frac { 1- 6 \delta} {\sqrt{4\pi }}\cdot  \end{equation}
  Indeed, by hypothesis
  $$ \wt u(t,\cdot)  =  {\rm e}^{ i (t-t_{n}) \Delta}\, (\varphi_1+ v_n)+ {\rm r}_n(t,\cdot), $$
which  clearly ensures the result thanks to \eqref{eq:v_n},  \eqref{eq:fi} and \eqref{eqslab} provided that $T  \leq 1$. \\

 \noindent   In order to establish \eqref{eqslab}, let us recall that since $u$ solves the Cauchy problem
$$ \left\{
 \begin{aligned}
  &i\partial_t u+\Delta u = f(u), \\
  &u_{|t=t_{n}}=u(t_{n},\cdot),
 \end{aligned}
\right.$$
the remainder term ${\rm r}_n$ writes
$${\rm r}_n(t,\cdot) = -i \int^t_{t_{n}} {\rm e}^{ i (t-s) \, \Delta}\, f(u(s,\cdot))\, ds.  $$
Thus taking advantage of  Strichartz estimates \eqref{str}, we deduce  that
\begin{equation}\label{basic}\|{\rm r}_n\|_{\mbox{\tiny ST}(I^n_T)} \lesssim  \|f(u)\|_{L^{\frac 4 3}(I^n_T, L^{\frac 4 3})} + \|\nabla f(u)\|_{L^{\frac 4 3}(I^n_T, L^{\frac 4 3})}\cdot\end{equation}
Firstly using the fact that $|f(u)| \lesssim  |u|^5 {\rm e}^{4 \pi
|u|^2}$,   we obtain
\begin{eqnarray*}\|f(u(t,\cdot))\|^{\frac 4 3}_{ L^{\frac 4 3}(\R^2)} &\lesssim &\int_{\R^2}\,{\rm e}^{\frac {16 \pi} 3
|u(t,x)|^2} |u(t,x)|^{\frac {20} 3} \, dx \\ &\lesssim & {\rm e}^{\frac {4 \pi} 3 (1- \frac {\delta} {10} )\|u(t,\cdot)\|_{L^\infty}^2}  \int_{\R^2}\,{\rm e}^{4 \pi  (1+ \frac {\delta} {10} )
|u(t,x)|^2} |u(t,x)|^{\frac {20} 3} \, dx \\ &\lesssim & {\rm e}^{\frac {4 \pi} 3 (1- \frac {\delta} {10} )\|u(t,\cdot)\|_{L^\infty}^2}  \int_{\R^2}\,{\rm e}^{4 \pi  (1+ \frac {\delta} {10} )
|u_0(t,x) + \wt u(t,x)|^2} |u(t,x)|^{\frac {20} 3} \, dx. \end{eqnarray*}
Remembering that $\varphi_0 \in \cD$ and thus $\| u_0\| _{L^\infty(\R,L^\infty(\R^2)} \lesssim \| \varphi_0\| _{H^2}$,  we infer that
\begin{eqnarray*}\|f(u(t,\cdot))\|^{\frac 4 3}_{ L^{\frac 4 3}(\R^2)} &\lesssim & {\rm e}^{\frac {4 \pi} 3 (1- \frac {\delta} {10} )\|u(t,\cdot)\|_{L^\infty}^2} {\rm e}^{c_\delta\|u_0\|_{L^\infty}^2} \int_{\R^2}\,{\rm e}^{4 \pi  (1+ \frac {2 \delta} {10} )
| \wt u(t,x)|^2} |u(t,x)|^{\frac {20} 3} \, dx\\ &\lesssim & {\rm e}^{\frac {4 \pi} 3 (1- \frac {\delta} {10} )\|u(t,\cdot)\|_{L^\infty}^2}  \int_{\R^2}\,{\rm e}^{4 \pi  (1+ \frac {2 \delta} {10} )
| \wt u(t,x)|^2} |u(t,x)|^{\frac {20} 3} \, dx.\end{eqnarray*}
Actually,  in  view of the continuous embedding of $H^1(\R^2)$ into $L^{\frac {20} 3}(\R^2)$ and the conservation laws \eqref{mass}-\eqref{hamil}, we deduce  that
$$ \int_{|\wt u| \leq 1}\,{\rm e}^{4 \pi  (1+ \frac {2 \delta} {10} )
| \wt u(t,x)|^2} |u(t,x)|^{\frac {20} 3} \, dx \lesssim \int_{\R^2}\, |u(t,x)|^{\frac {20} 3} \, dx \lesssim C(M,H).$$
Besides   $\delta$ being fixed small enough, we get  by virtue of \eqref{tildpetit}
\begin{eqnarray*} \int_{|\wt u| \geq 1}{\rm e}^{4 \pi  (1+ \frac {2 \delta} {10} )
| \wt u(t,x)|^2} |u(t,x)|^{\frac {20} 3} \, dx &\lesssim & \int_{\R^2}\big({\rm e}^{4 \pi  (1+ \frac {3 \delta} {10} )
| \wt u(t,x)|^2} -1 - 4 \pi  (1+ \frac {3 \delta} {10} )
| \wt u(t,x)|^2\big) dx  \\ &\lesssim & \kappa.\end{eqnarray*}
Therefore
$$ \|f(u(t,\cdot))\|^{\frac {4 } 3}_{ L^{\frac 4 3}(\R^2)} \lesssim  {\rm e}^{\frac {4 \pi} 3 (1- \frac {\delta} {10} )\|u(t,\cdot)\|_{L^\infty}^2}.$$
The logarithmic inequality (\ref{H-mu}) yields for any fixed
 $\lambda>\frac{1}{\pi}$,
\begin{equation}\label{logap1}  {\rm e}^{\frac {4 \pi} 3 (1- \frac {\delta} {10} )\|u(t,\cdot)\|_{L^\infty}^2}\lesssim
\left(1+\frac{\|u(t,\cdot)\|_{{\cC}^{\frac 1 2 }}}{\|u(t,\cdot)\|_{H_\mu}}\right)^{\frac {4 \pi} 3 (1- \frac {\delta} {10} )\lambda
\|u(t,\cdot)\|_{H_\mu}^2}. \end{equation}
Arguing as in the proof of Lemma  \ref{lem:1},
we infer that $\mu > 0$ and  $\lambda>\frac{1}{\pi}$ can be fixed so that
$$\|f(u(t,\cdot))\|^{\frac {4 } 3}_{ L^{\frac 4 3}(\R^2)} \lesssim  \Big(1+\|u(t,\cdot)\|^{\frac {4 } 3 (1- \frac {\delta^2} {100} )}_{W^{1,4}}\Big).$$
Along the same lines, using the fact that $|\nabla f(u)| \lesssim  {\rm e}^{4 \pi |u|^2}\, |\nabla u|\, | u|^4$, we obtain
\begin{eqnarray*} \| \nabla f(u(t,\cdot))\|^{\frac 4 3}_{ L^{\frac 4 3}(\R^2)} &\lesssim & \| \nabla u(t,\cdot)\|^{\frac 4 3}_{ L^{4}(\R^2)} \Big(\int_{\R^2}\,{\rm e}^{8 \pi
|u(t,x)|^2} |u(t,x)|^{8} \, dx \Big)^{\frac 2 3 } \\ &\lesssim & \| \nabla u(t,\cdot)\|^{\frac 4 3}_{ L^{4}(\R^2)} {\rm e}^{\frac {8 \pi} 3 (1- \frac {\delta} {10} )\|u(t,\cdot)\|_{L^\infty}^2} \Big(\int_{\R^2}\,{\rm e}^{4 \pi (1+ \frac {\delta} {10} )
|u(t,x)|^2} |u(t,x)|^{8} \, dx \Big)^{\frac 2 3 } \\ &\lesssim & \|  u(t,\cdot)\|^{\frac 4 3}_{ W^{1,4}} \Big(1+ \|u(t,\cdot)\|^{\frac 8 3(1- \frac {\delta^2} {100})}_{ W^{1,4}}\Big).\end{eqnarray*}
To summarize, we proved that for any fixed $\delta$ sufficiently small there is a positive constant $c_\delta$ so that
\begin{equation}\label{estnon*} \|  f(u(t,\cdot))\|^{\frac 4 3}_{ L^{\frac 4 3}(\R^2)} + \| \nabla f(u(t,\cdot))\|^{\frac 4 3}_{ L^{\frac 4 3}(\R^2)} \leq c_\delta \Big(1+ \|u(t,\cdot)\|^{4(1- \frac {\delta^2} {100})}_{ W^{1,4}}\Big),\end{equation}
which in view of \eqref{bootstrap} and \eqref{basic} gives rise to
\begin{equation}\label{estnon**}  \|{\rm r}_n\|_{\mbox{\tiny ST}(I^n_T)}  \leq c_\delta \big(T+ T^{ \frac {\delta^2} {100}} \, \|u\|^{4(1- \frac {\delta^2} {100})}_{\mbox{\tiny ST}(I^n_T)}\big)^{\frac 3 4} \lesssim c_\delta \,T^{ \alpha (\delta)},\end{equation}
with $\alpha (\delta)= \frac {3\delta^2} {400}\cdot$ By the standard continuity argument, this gives the required result for $T$ sufficiently small. \\

 \noindent Recall that we assumed that $\varphi \neq 0$, thus there is a positive constant $ \eta $ such that
$$\sup_{t\in [-1,1]} \| {\rm e}^{ i t \Delta}\, \varphi\| _{L^8}  \geq  \eta  > 0\cdot $$
Taking advantage of \eqref{eqslab}, there exists   $T > 0$ so that  for any $n$ large enough we have
$$\sup_{t\in I^n_T} \| {\rm r}_n(t,\cdot)\| _{L^8}  \leq  C  \sup_{t\in I^n_T} \| {\rm r}_n(t,\cdot)\| _{H^1} \leq  \frac  \eta 2\cdot$$
Consequently taking advantage of   \eqref{eq:v_nbis}, we deduce that  for any $t\in I^n_T$
$$  \| u(t,\cdot)\| _{L^8}  \geq  \| {\rm e}^{ i (t-t_{n}) \Delta}\, \varphi\| _{L^8} -  \| {\rm r}_n(t,\cdot)\| _{L^8}-  \| {\rm e}^{ i (t-t_{n}) \Delta}\, v_n\| _{L^8} \geq \eta -  \frac  \eta 2 +  \circ(1),$$
which ensures that
$$  \| u\| _{L^4(I^n_T,L^8)}  \geq   \frac  \eta 2  \, T^{\frac 1 4}+ \circ(1).$$
But in view of  a priori estimate \eqref{estpriori},  we necessarily have  $\| u\| _{L^4(I^n_T,L^8)}  = \circ(1)$ which yields a contradiction since $(u(t_n,\cdot))_{n  \in  \N}$ is bounded in $H^1(\R^2)$.  This ends the proof of the Lemma.

 \subsubsection*{Proof of Lemma \ref{lemet2}}
  According to Lemma \ref{compacity}, it suffices to prove that
  \begin{equation}
\label{eq:cv} {\rm e}^{ i \tau_n \Delta}\, u(t_n,\cdot) \stackrel{n\to\infty}\rightharpoonup 0,  \quad \mbox{in}  \quad H^1( \R^2).\end{equation}
  But since $( {\rm e}^{ i \tau_n \Delta}\, u(t_n,\cdot))$ is bounded in $H^1( \R^2)$, it is enough to prove \eqref{eq:cv}
in $L^2( \R^2)$. Thus by density arguments, we are reduced to prove that for any function $\varphi $ in $ \cS (\R^2)$  whose Fourier transform $\widehat{\varphi}$ belongs to  $\cD (\R^2 \setminus \{0\})$, we have
$$ \Big( {\rm e}^{ i \tau_n \Delta}\, u(t_n,\cdot), \varphi\Big)_{L^2( \R^2)} \stackrel{n\to\infty} \longrightarrow  0.$$
Since   $u$ solves the Cauchy problem \eqref{NLS}-\eqref{def f}, ${\rm e}^{ i \tau_n \Delta}\, u(t_n,\cdot)$ may be decomposed as follows:
  \begin{equation}
\label{eq:dec} {\rm e}^{ i \tau_n \Delta}\, u(t_n,\cdot)= v^{(0)}_n + v^{(1)}_n + v^{(2)}_n,\end{equation}
where
$$v^{(0)}_n:=   {\rm e}^{ i (\tau_n+t_n) \Delta}\,u_0,$$
and
$$v^{(1)}_n(x):=  -i \int_0^{t_{n}} {\rm e}^{ i (\tau_n+ t_n-s) \, \Delta}\, \Big[(1- \chi(x))f(u(s,x)) + 8 \pi^2\chi(x)|u (s,x)|^4 u (s,x)\Big]\, ds,$$
with $\chi$   a radial function in $\mathcal{D}(\R^2)$ valued in $[0,1]$ satisfying \begin{eqnarray*}
\chi(x)&=&\; \left\{
\begin{array}{cllll}1 \quad&\mbox{if}&\quad
|x|\leq \frac 1 2,\\ 0 \quad
&\mbox{if}&\quad |x|\geq 1.
\end{array}
\right.
\end{eqnarray*}
Obviously
$$ v^{(2)}_n(x)=  -i \int_0^{t_{n}} {\rm e}^{ i (\tau_n+ t_n-s) \, \Delta}\,  \chi(x) \, G_2( u (s,x)) \, ds,$$
where $G_2(u)= \Big({\rm e}^{4\pi |u|^2}-1-4\pi |u|^2-8\pi^2 |u|^4 \Big)\, u$.\\

 Now, we shall treat differently the three parts in \eqref{eq:dec}.  Firstly thanks to the dispersive estimate \eqref{disest}
$$ \|v^{(0)}_n \| _{ L^p( \R^2)} \stackrel{n\to\infty} \longrightarrow  0,
$$
for all $2< p< \infty$.\\

 Secondly,
$$ \|v^{(1)}_n \| _{ L^\infty( \R^2)} \stackrel{n\to\infty} \longrightarrow  0.
$$
Indeed, we know that
$$v^{(1)}_n(x):=  -i \int_0^{t_{n}} {\rm e}^{ i (\tau_n+ t_n-s) \, \Delta}\, G_1( u (s,x))\, ds,$$
where $$G_1( u (s,x))= \Big[(1- \chi(x))f(u(s,x)) + 8 \pi^2\chi(x)|u (s,x)|^4 u (s,x)\Big].$$
 Besides  the radial estimate \eqref{Bound} and the conservation laws \eqref{mass}-\eqref{hamil} ensure that the function $u$ is bounded away from the origin uniformly on $s\in \R$. Thus, there is a positive constant $C$ such that for any $(s,x) \in \R \times \R^2$, we have
$$ |G_1(u(s,x))| \leq C\, |u(s,x)|^5.  $$ Consequently
$$  \|G_1 (s,\cdot) \| _{ L^1( \R^2)} \lesssim \|u (s,\cdot) \|^5_{ L^5( \R^2)} \lesssim \|u (s,\cdot)\|_{ L^2( \R^2)}  \|u(s,\cdot) \|^4_{ L^8( \R^2)}\lesssim   \|u(s,\cdot) \|^4_{ L^8( \R^2)}$$
and then under the dispersive estimate \eqref{disest}
$$  \|v^{(1)}_n \| _{ L^\infty( \R^2)} \lesssim \int_0^{t_{n}} \frac 1 { (\tau_n+ t_n-s) }\, \|u(s,\cdot) \|^4_{ L^8( \R^2)}\, ds \lesssim \frac 1 { \tau_n} \,  \|u \|^4_{ L^4(\R,L^8( \R^2))}$$ which provides the result in view of the a priori estimate \eqref{estpriori}. \\

Finally, for any $\varphi $ in $ \cS (\R^2)$  whose Fourier transform $\widehat{\varphi}$ belongs to  $\cD (\R^2 \setminus \{0\})$, we infer that
$$ \Big( v^{(2)}_n, \varphi\Big)_{L^2( \R^2)} \stackrel{n\to\infty} \longrightarrow  0.$$
Indeed, by definition
$$ \Big( v^{(2)}_n, \varphi\Big)_{L^2( \R^2)}= i \int_0^{t_{n}}\Big( \chi \, G_2(u(s,\cdot)),  {\rm e}^{- i (\tau_n+ t_n-s) \, \Delta}\,  \varphi  \Big)_{L^2( \R^2)} \, ds.$$
But by repeated integration by parts, we obtain for any nonnegative integer $k$
 \begin{equation}
\label{eq:usus}\Big|\big({\rm e}^{- i (\tau_n+ t_n-s) \, \Delta}\,  \varphi\big)(s,x) \Big| \leq C_k  \Big| \frac {\langle x \rangle} {\tau_n+ t_n-s} \Big|^k, \end{equation}
which   in the particular case where $k=2$ implies that \begin{eqnarray*} \Big| \big( v^{(2)}_n, \varphi\big)_{L^2( \R^2)} \Big| &\lesssim&  \int_{\R^2} \int_0^{t_{n}} \frac {\langle x\rangle^2  \,  \chi(x) \, G_2(u(s,x))} {(\tau_n+ t_n-s)^2}  ds \, dx \\ &\lesssim& \int_0^{t_{n}} \frac {1} {(\tau_n+ t_n-s)^2}    \int_{|x| \leq 1} G_2(u(s,x))\,  ds \,dx  \\ &\lesssim&  \sum_{m=0}^{[t_n]} \frac {1} {(\tau_n+ t_n-(m+1))^2} \int_m^{m+1}   \int_{|x| \leq 1} G_2(u(s,x))\, ds \,  dx.\end{eqnarray*}
But in view of Corollary \ref{newesvir} we have  $$ \int_m^{m+1}    \int_{|x| \leq 1} G_2(u(s,x))\, ds \,dx   \lesssim \int_m^{m+1}   \int_{|x| \leq 1} G(u(s,x))\, ds \,dx\lesssim 1.$$ This implies that
$$\Big| \big( v^{(2)}_n, \varphi\big)_{L^2( \R^2)} \Big| \lesssim \sum_{m=0}^{[t_n]} \frac {1} {(\tau_n+ t_n-(m+1))^2} \lesssim \frac {1} {\tau_n} \stackrel{n\to\infty} \longrightarrow  0,$$
which achieves  the proof of the lemma.    \end{proof}
\subsection{Second step: lack of compactness at infinity}
The  approach  that we shall adopt to establish Theorem  \ref{Main-NLS}  relies  on virial identity and uses in a
crucial way the radial setting and particularly the fact that we
deal with bounded functions far away from the origin. Roughly speaking, virial identity asserts that any critical solution to  \eqref{NLS}-\eqref{def f}   displays necessarily a lack of compactness at infinity. More precisely, we have the following lemma:
 \begin{lem} \label{definfinil}
 Let $u$ be  a solution to the nonlinear Schr\"odinger equation \eqref{NLS}-\eqref{def f} satisfying
 \begin{equation}
\label{contcond} H(u_{0})= 1.\end{equation}
Then there exist  $\varepsilon_0 > 0$, $t_n \stackrel{n\to\infty} \longrightarrow  \infty $ and  $R_n \stackrel{n\to\infty} \longrightarrow \infty$  such that
 \begin{equation}
\label{mainpoint}
   \int_{|x|\geq R_n}\, |\nabla u (t_n,x)|^2 \, dx \geq \varepsilon_0.
\end{equation}
  \end{lem}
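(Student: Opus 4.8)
The plan is to argue by contradiction. The negation of the statement is that the gradient does not escape to infinity, i.e. that for every $\varepsilon>0$ there exist $R,T>0$ with
\[
\sup_{t\ge T}\int_{|x|\ge R}|\nabla u(t,x)|^2\,dx\le\varepsilon.\qquad(\star)
\]
From $(\star)$ I would extract a contradiction by means of the virial identity of Lemma \ref{lemvirial} applied to the localized quantity $V_R(t)=\int_{\R^2}\Phi_R(x)|u(t,x)|^2\,dx$, which for each fixed $R$ obeys $0\le V_R(t)\le R^2M$ since $0\le\Phi_R\le R^2$ and $\int_{\R^2}|u|^2=M$.

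The decisive preliminary step, and essentially the only place where the criticality $H(u_0)=1$ is genuinely used, is the lower bound $\liminf_{t\to+\infty}\|\nabla u(t,\cdot)\|_{L^2}^2=:c_0>0$. I would prove it by contradiction: if $\|\nabla u(t_n,\cdot)\|_{L^2}\to0$ along some $t_n\to+\infty$, then Lemma \ref{lemet1} gives $u(t_n,\cdot)\rightharpoonup0$ in $H^1$, so the compact embedding of Lemma \ref{compacity} yields $\|u(t_n,\cdot)\|_{L^4}\to0$, and Corollary \ref{cormoser} forces $\lambda_n:=\|u(t_n,\cdot)\|_{\widetilde{\mathcal L}}\to0$. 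Expanding the defining Orlicz inequality $\int_{\R^2}({\rm e}^{|u|^2/\lambda_n^2}-1-|u|^2/\lambda_n^2)\le\kappa$ in power series gives $\|u(t_n,\cdot)\|_{L^{2k}}^{2k}\le\kappa\,k!\,\lambda_n^{2k}$ for every $k\ge2$, whence
\[
\int_{\R^2}F(u(t_n,x))\,dx=\sum_{k\ge3}\frac{(4\pi)^{k-1}}{k!}\,\|u(t_n,\cdot)\|_{L^{2k}}^{2k}\lesssim\sum_{k\ge3}(4\pi\lambda_n^2)^{k}\stackrel{n\to\infty}{\longrightarrow}0.
\]
Together with $\|\nabla u(t_n,\cdot)\|_{L^2}\to0$ this would give $H(u(t_n))\to0$, contradicting the conservation law $H(u(t_n))=H(u_0)=1$.

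Next I would insert this lower bound into the virial identity \eqref{virial}. Splitting it according to the bulk $\{|x|\le R\}$, where $\Phi'=1$, $\Phi''=0$, $\Delta\Phi_R=4$ and $\Delta^2\Phi_R=0$, and the transition annulus $A_R=\{R\le|x|\le\sqrt2\,R\}$, and keeping only the nonnegative bulk contribution of $\psi(u):=|u|^2\tilde f(|u|^2)-g(|u|^2)\ge0$, I obtain
\[
\frac{d^2}{dt^2}V_R(t)\ge 8\int_{|x|\le R}|\nabla u|^2\,dx-C\Big(\int_{|x|\ge R}|\nabla u|^2\,dx+R^{-2}M+\int_{|x|\ge R}\psi(u)\,dx\Big),
\]
the error being supported in $A_R$ and estimated through $|\Phi'|,|\Phi''|,|\Delta\Phi_R|\lesssim1$, $|\Delta^2\Phi_R|\lesssim R^{-2}$ and $|x\cdot\nabla u|^2/R^2\lesssim|\nabla u|^2$. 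Here the radial framework is essential: by the pointwise bound \eqref{Bound} together with the conservation laws \eqref{mass}--\eqref{hamil} (which yield $\|\nabla u(t)\|_{L^2}\le1$) one has $\|u(t,\cdot)\|_{L^\infty(|x|\ge R)}\lesssim R^{-1/2}$ uniformly in $t$, so that $\psi(u)\lesssim|u|^6$ on $\{|x|\ge R\}$ and $\int_{|x|\ge R}\psi(u)\,dx\lesssim R^{-2}$ uniformly in $t$.

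It then remains to combine the two ingredients. Writing $\int_{|x|\le R}|\nabla u|^2=\|\nabla u\|_{L^2}^2-\int_{|x|\ge R}|\nabla u|^2$ and using $\|\nabla u(t)\|_{L^2}^2\ge c_0$ for $t$ large, the previous inequality reduces to $\frac{d^2}{dt^2}V_R(t)\ge 8c_0-C(\int_{|x|\ge R}|\nabla u|^2+R^{-2})$. Choosing first $R$ large, and then via $(\star)$ a threshold $T$ so large that $\int_{|x|\ge R}|\nabla u(t)|^2$ stays small for $t\ge T$, I would secure $\frac{d^2}{dt^2}V_R(t)\ge 4c_0>0$ for all $t\ge T$; hence $V_R$ is uniformly convex on $[T,+\infty)$ and grows at least quadratically, which is incompatible with $0\le V_R(t)\le R^2M$. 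This contradiction establishes \eqref{mainpoint}. The hard part is precisely the gradient lower bound $c_0>0$: this is where the exact level $H(u_0)=1$, the vanishing $u(t_n)\rightharpoonup0$, and the sharp Orlicz/Moser--Trudinger information must cooperate to rule out that the whole energy be carried by a spreading, vanishing-gradient profile.
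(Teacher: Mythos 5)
Your proof is correct, and it shares the paper's skeleton: negate the statement into $(\star)$, bound $\frac{d^2}{dt^2}V_R(t)$ from below by the bulk contribution minus annulus errors, control those errors through the radial bounds \eqref{Bound}--\eqref{BoundLP} and the conservation laws, and reach a contradiction --- which you get from $0\le V_R\le R^2M$, while the paper uses $\bigl|\frac{d}{dt}V_R\bigr|\lesssim R$; both work. The genuine divergence is in how positivity of the bulk term is produced. The paper never needs a lower bound on $\|\nabla u(t)\|_{L^2}$: it keeps the nonlinear bulk term and uses the pointwise inequality $\frac23 g(|u|^2)\le |u|^2\tilde f(|u|^2)-g(|u|^2)$ together with the identity $g(|u|^2)=F(u)$, so that the bulk of the virial dominates $C_1\int_{|x|\le R}\bigl(|\nabla u|^2+F(u)\bigr)=C_1\bigl(1-\int_{|x|\ge R}(|\nabla u|^2+F(u))\,dx\bigr)$, and the tail of $F(u)$ is $O(R^{-2})$ by radiality; thus $H(u_0)=1$ feeds in directly and the proof stays entirely inside the virial/radial-estimate toolbox, with no Moser--Trudinger input. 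You instead discard the (nonnegative) nonlinear bulk and prove the auxiliary bound $\liminf_{t\to\infty}\|\nabla u(t)\|_{L^2}^2>0$ through Lemma \ref{lemet1}, Lemma \ref{compacity}, Corollary \ref{cormoser}, and a term-by-term expansion of the Orlicz constraint showing that $\|u(t_n)\|_{\wt{\mathcal L}}\to0$ forces $\int F(u(t_n))\,dx\to0$. This chain is valid (the sum/integral interchange is by positivity, and the Orlicz infimum is handled by monotone convergence), and the observation that a vanishing $\wt{\mathcal L}$-norm kills the potential energy is a nice by-product of independent interest; but your route is heavier, since it imports the appendix Moser--Trudinger machinery and Lemma \ref{lemet1}, which itself rests on the a priori estimate \eqref{estpriori}. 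Note that Lemma \ref{lemet1} is actually dispensable for you: if $\|\nabla u(t_n)\|_{L^2}\to0$, every weak $H^1$ limit point of $(u(t_n,\cdot))$ has zero gradient, hence is a constant belonging to $L^2(\R^2)$, hence zero, so the weak vanishing you need is immediate. Two harmless imprecisions remain: the choice of $R$ should come after (or be enlarged beyond) the radius furnished by $(\star)$, using that $R\mapsto\int_{|x|\ge R}|\nabla u(t,x)|^2\,dx$ is nonincreasing, and the inequality defining $\lambda_n$ holds at $\lambda=\lambda_n$ itself only after the monotone-convergence remark above.
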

   \begin{proof}
   Clearly if $u$ is a solution of the concerned  nonlinear Schr\"odinger equation, then  in view of  \eqref{Virial} we have \begin{equation}\label{contradic1}
  \Big| \frac d{dt} V_R (t)  \Big|   \lesssim R,
\end{equation}
where $\ds V_R(t) = \int_{\R^2}\Phi_R(x)|u(t,x)|^2\;dx$, with $\Phi_R(x)=R^2\Phi(\frac{|x|^2}{R^2})$,  $\Phi$ being  a smooth and radial function satisfying $0\leq\Phi\leq1$, $\Phi(r)=r$, for all $r\leq1$, and $\Phi(r)=0$ for all $r\geq2$.  \\

 \noindent Besides, in light of \eqref{virial}
 $$ \frac{d^2}{dt^2} V_R(t) = 8 \int_{|x|\leq R}|\nabla u(t,x)|^2 \;dx+ 8 \int_{|x|\leq R}(|u|^2 \tilde f(|u|^2)-g(|u|^2))(t,x)\;dx+\wt{V_R}(t),  $$
 where as it was proved in Section \ref{subVirial}, $\tilde f(s)=e^{4\pi s}-1-4\pi s$ and $g(s)=\int_0^s\tilde f(\rho)\;d\rho$, and where the remainder term $\wt{V_R}(t)$ satisfies
 \begin{eqnarray*}\big|  \wt{V_R}(t) \big| &\lesssim&  \int_{R \leq |x|\leq 2 R}|\nabla u(t,x)|^2 \;dx+ \int_{R \leq |x|\leq 2 R}(|u|^2 \tilde f(|u|^2)-g(|u|^2))(t,x)\;dx\\ &+&\frac 1{R^2} \int_{R \leq |x|\leq 2 R}|u(t,x)|^2\;dx.\end{eqnarray*}
Now taking into account  the expressions of $\tilde f$ and $g$, we infer  in view of \eqref{BoundLP} that
\begin{eqnarray*}\int_{R \leq |x|\leq 2 R}(|u|^2 \tilde f(|u|^2)-g(|u|^2))(t,x)\;dx &\lesssim & \|u(t,\cdot) \|^6_{L^6( |x|\geq  R)} \\ &\lesssim & \|u(t,\cdot) \|^2_{L^2( |x|\geq  R)}\|u(t,\cdot) \|^4_{L^\infty( |x|\geq  R)} \\ &\lesssim & \frac 1 {R^2} \|u(t,\cdot) \|^4_{L^2( |x|\geq  R)} \|\nabla u(t,\cdot) \|^2_{L^2( |x|\geq  R)}.\end{eqnarray*}
Therefore, we deduce in light of the  conservation laws \eqref{mass}-\eqref{hamil} that
$$ \big|  \wt{V_R}(t) \big| \lesssim \|\nabla u(t,\cdot) \|^2_{L^2( |x|\geq  R)}+ \frac 1 {R^2}\cdot$$
Finally  the fact that
$$
\frac23g(|u|^2)\leq |u|^2\tilde f(|u|^2)- g(|u|^2),
$$
ensures the existence of  positive constants $C_1$ and $C_2$ such that
$$ \frac{d^2}{dt^2} V_R(t) \geq C_1 \int_{|x|\leq R} \big(|\nabla u(t,x)|^2 + F(u(t,x))\big) \;dx - C_2 \|\nabla u(t,\cdot) \|^2_{L^2( |x|\geq  R)} - \frac {C_2} {R^2}\cdot$$
This gives rise to
 \begin{eqnarray*} \frac{d^2}{dt^2} V_R(t) &\geq& C_1 H(u_0) - C_1\int_{|x|\geq R} (|\nabla u(t,x)|^2 + F(u(t,x))) dx - C_2 \|\nabla u(t,\cdot) \|^2_{L^2( |x|\geq  R)} - \frac {C_2} {R^2} \\ & \geq& C_1 - (C_1+C_2) \int_{|x|\geq R} |\nabla u(t,x)|^2 dx - \frac {C_2} {R^2} - C_1\int_{|x|\geq R} F(u(t,x)) dx. \end{eqnarray*}
Now  the solution $u$ is radial, so thanks to  \eqref{BoundLP} it is bounded on $\ds \big\{|x|\geq R \big\}$. Therefore
$$ \int_{|x|\geq R} F(u(t,x)) dx \lesssim \|u\|_{L^ \infty(\R, L^6( |x|\geq  R))}^6 \lesssim \|u\|_{L^\infty(\R, L^2( |x|\geq  R))}^2 \|u\|_{L^\infty(\R, L^\infty( |x|\geq  R))}^4.$$
Taking advantage of \eqref{BoundLP}, we deduce that
$$ \int_{|x|\geq R} F(u(t,x)) dx \lesssim \frac {1} {R^2}\|u\|_{L^\infty(\R, L^2( |x|\geq  R))}^4 \|\nabla u\|_{L^\infty(\R, L^2( |x|\geq  R))}^2, $$
which enables us due to the  conservation laws \eqref{mass}-\eqref{hamil} to infer that
$$ \frac{d^2}{dt^2} V_R(t) \geq  C_1 - (C_1+C_2) \int_{|x|\geq R} |\nabla u(t,x)|^2 dx - \frac {C_3} {R^2},$$
which easily ensures \eqref{mainpoint} taking into account of \eqref{contradic1}.  \end{proof}

  \medbreak

\subsection{Third step: study of the sequence $({\rm e}^{ i t \Delta} \, u(t_n, \cdot))$} In order to complete  the proof  of Theorem  \ref{Main-NLS}, we shall investigate the solution to the linear Schr\"odinger equation  $$ v_n:= {\rm e}^{ i t \Delta} \, u(t_n, \cdot),$$
where $(t_n)_{n \geq 0}$  is the sequence given by Lemma \ref{definfinil}.  In view of the first step,
the sequence  $(v_n)_{n \in \N}$  converges strongly to zero in $L^\infty(\R_+, L^4( \R^2))$.  For that purpose,  we have to distinguish two sub-cases depending on whether the sequence $v_n$ satisfies $$ \liminf_{n\to\infty}\;\|v_n\|_{L^\infty(\R_+,\wt{\mathcal L})} <  \frac 1 {\sqrt{4\pi}} \quad \mbox {or} \quad \lim_{n\to\infty}\;\|v_n\|_{L^\infty(\R_+,\wt{\mathcal L})} = \frac 1 {\sqrt{4\pi}}\cdot$$

 In the first situation where   we have $\liminf_{n\to\infty}\;\|v_n\|_{L^\infty(\R_+,\wt{\mathcal L})} < \frac 1{\sqrt{4\pi}}$,    Theorem  \ref{Main-NLS} derives immediately from Lemma \ref{lem:1}.  \\

 Let us at present  consider the more challenging  situation where  we are  dealing with  a sequence $(v_n)_{n \in \N}$ satisfying $\|v_n\|_{L^\infty(\R_+, L^4( \R^2))}\to 0$ and $\;\|v_n\|_{L^\infty(\R_+,\wt{\mathcal L})} \to  \frac 1 {\sqrt{4\pi}}\cdot$ This in particular means that  there exists a sequence $(\tau_n)_{n \in \N}$ of positive reals such that $$\|{\rm e}^{ i \tau_n \Delta} \, u(t_n, \cdot)\|_{\wt{\mathcal L}} \stackrel{n\to\infty}\longrightarrow \frac 1 {\sqrt{4\pi}}\cdot$$
Thus in view of \eqref{formprofiletild}
$$ {\rm e}^{ i \tau_n \Delta} \, u(t_n, \cdot)=  \sqrt{\frac{\alpha_n}{2\pi}}\;{\mathbf L}\left(\frac{-\log|\cdot|}{\alpha_n}\right)+{\rm
r}_n, $$
where $\alpha_n$ is a scale in the sense of Definition \ref{orthogen},  ${\mathbf L}$ is the Lions profile given by \eqref{lionsprof} and $\| \nabla{\rm
r}_n\|_{L^2} \stackrel{n\to\infty}\longrightarrow 0$.  \\

 \noindent There are two possibilities up to extraction
\begin{enumerate}
\item $ \ds \frac {\tau_n {\rm e}^{\alpha_n}} {\sqrt{\alpha_n}} \stackrel{n\to\infty}\longrightarrow   + \infty$  or else
\item $\ds \frac {\tau_n {\rm e}^{\alpha_n}} {\sqrt{\alpha_n}}  \  \lesssim 1$.\end{enumerate}
To investigate the case $(1)$, we shall decompose  $v_n$ as follows
$$ v_n(t,\cdot):= v^{(1)}_n(t,\cdot)+ v^{(2)}_n(t,\cdot)+ {\rm e}^{ i (t-\tau_n )\Delta} \, {\rm
r}_n,$$
where for $j \in \{1, 2\}$
$$ v^{(j)}_n(t,\cdot):= {\rm e}^{ i (t-\tau_n )\Delta} \, \sqrt{\frac{\alpha_n}{2\pi}}\;\psi^{(j)}\left(\frac{-\log|\cdot|}{\alpha_n}\right),$$
with, for fixed $0 < \delta <  \frac 1 2$,
\begin{eqnarray*}
\psi^{(1)}(s)&=&\; \left\{
\begin{array}{cllll}0 \quad&\mbox{if}&\quad
s\leq 0,\\ s \quad
&\mbox{if}&\quad 0\leq s\leq 1-\delta,\\
1-\delta \quad&\mbox{if}&s\geq 1-\delta,
\end{array}
\right.
\end{eqnarray*}
and
\begin{eqnarray*}
\psi^{(2)}(s)&=&\; \left\{
\begin{array}{cllll}0 \quad&\mbox{if}&\quad
s\leq 1-\delta,\\ s - (1-\delta) \quad
&\mbox{if}&\quad 1-\delta \leq s\leq 1,\\
\delta \quad&\mbox{if}&s\geq 1-\delta.
\end{array}
\right.
\end{eqnarray*}
Note that $\|v^{(i)}_n\|_{L^\infty(\R,L^2)}= \circ(1)$, for $i \in \{1, 2\}$. Moreover, by virtue of the Sobolev embedding \eqref{2D-embedbis} and the conservation laws \eqref{consen}-\eqref{consen2}
\begin{eqnarray}  \|v^{(1)}_n\|_{L^\infty(\R,\wt{\mathcal L})} &\leq& \frac 1 {\sqrt{4\pi}} \Big(\|\nabla v^{(1)}_n\|^2_{L^\infty(\R,L^2)}+ \| v^{(1)}_n\|^2_{L^\infty(\R,L^2)}\Big)^{\frac 12}
\nonumber \\ &\leq& \frac 1 {\sqrt{4\pi}} \Big(1-\delta+\circ(1) \Big)^{\frac 12} \leq  \frac 1 {\sqrt{4\pi}} \Big(1-\frac \delta 2 \Big)^{\frac 12}\label{behave1}, \end{eqnarray}
for $n$ large enough. \\

 \noindent  Besides if $\ds \frac {\tau_n {\rm e}^{\alpha_n}} {\sqrt{\alpha_n}} \to   + \infty$, then  in view of the dispersive estimate \eqref{disest}
$$  \sup_{t \in \R_ - }\|v^{(2)}_n(t,  \cdot)\|_{L^\infty} \lesssim \frac 1 {\tau_n}  \Big\|\sqrt{\frac{\alpha_n}{2\pi}}\;\psi^{(2)}\left(\frac{-\log|\cdot|}{\alpha_n}\right)\Big\|_{L^1}.
$$
But
$$ \Big\|\sqrt{\frac{\alpha_n}{2\pi}}\;\psi^{(2)}\left(\frac{-\log|\cdot|}{\alpha_n}\right)\Big\|_{L^1} \lesssim  \, \frac 1 {\sqrt{\alpha_n}}  {\rm e}^{-2 \alpha_n (1-\delta)}, $$which implies that
$$ \sup_{t \in \R_ - }\|v^{(2)}_n(t,  \cdot)\|_{L^\infty} \lesssim \, {\rm e}^{- \alpha_n (1- 2 \delta)} \stackrel{n\to\infty}\longrightarrow 0.$$
This gives rise to
\begin{equation}  \label{compinfty} \|v^{(2)}_n\|_{L^\infty(\R_ -, \wt{\mathcal L})} \stackrel{n\to\infty}\longrightarrow 0.\end{equation}
Finally knowing that $\|\nabla {\rm e}^{ i (t-\tau_n )\Delta} \, {\rm
r}_n\|_{L^\infty(\R, L^2)} \stackrel{n\to\infty}\longrightarrow 0$ and $\| {\rm e}^{ i (t-\tau_n )\Delta} \, {\rm
r}_n\|_{L^\infty(\R, L^2)} \lesssim 1$, we infer  in view of the refined estimate \eqref{apa4} that
$$ \| {\rm e}^{ i (t-\tau_n )\Delta} \, {\rm
r}_n\|_{L^\infty(\R, L^4)} \stackrel{n\to\infty}\longrightarrow 0,$$
and therefore by Corollary \ref{cormoser}
\begin{equation}  \label{compinftyrem} \| {\rm e}^{ i (t-\tau_n )\Delta} \, {\rm
r}_n\|_{L^\infty(\R, \wt{\mathcal L} )} \stackrel{n\to\infty}\longrightarrow 0.\end{equation}
Combining \eqref{behave1}, \eqref{compinfty} and \eqref{compinftyrem}, we deduce  that for any $n$ large enough
$$ \| {\rm e}^{ i t \Delta} \, u(t_n, \cdot)\|_{L^\infty(\R_-,\wt{\mathcal L})} <  \frac 1 {\sqrt{4\pi}} \Big(1-\frac \delta 4 \Big)^{\frac 12},$$  which  implies in view of Lemma \ref{lem:1} that  there is a positive constant C such that
$$ \|u( \cdot +t_n, \cdot)\|_{\mbox{\tiny ST}(\R_-) } \leq C  \quad  \mbox{and} \quad \|f(u( \cdot +t_n, \cdot))\| _{\mbox{\tiny ST}^*(\R_-)} \leq C.$$  Since $t_n \stackrel{n\to\infty}\longrightarrow + \infty$, this means that
$$ \|u\|_{\mbox{\tiny ST}(\R) }  +  \|f(u)\| _{\mbox{\tiny ST}^*(\R)} < \infty,$$
 which ends the proof of the result in that  case. \\

To handle the case $(2)$, we shall make use of the Fourier  approximation of the example  by Moser, namely  \eqref{logmoser} which gives rise to
\begin{equation} \label{lionsfourev}
 u(t_n, x) = \frac{1}{(2\pi)^2} \sqrt{\frac{2\pi}{\alpha_n}}\int_{ 1  \leq |\xi| \leq {\rm e}^{ \alpha_n}}  {\rm e}^{ i \, x \cdot \xi}  {\rm e}^{ i \, \tau_n  |\xi|^2}\frac{1}{|\xi|^2}\, d \xi +  \wt{\rm
r}_n(x),\end{equation}
with $\|\nabla \wt {\rm r}_n\|_{L^2} \stackrel{n\to\infty}\longrightarrow 0$.\\

 \noindent One can easily check that if  $\theta$ denotes  a regular function  valued in $[0,1]$ and satisfying
\begin{eqnarray*}
\theta(\rho)&=&\; \left\{
\begin{array}{cllll}0 \quad&\mbox{if}&\quad
\rho \leq 2,\\ 1 \quad
&\mbox{if}&\quad \rho \geq 3,
\end{array}
\right.
\end{eqnarray*}
then the above identity \eqref{lionsfourev} writes
\begin{eqnarray*}    u(t_n, x) &=& \frac{1}{(2\pi)^2} \sqrt{\frac{2\pi}{\alpha_n}}\int_{ 1  \leq |\xi| \leq {\rm e}^{ \alpha_n}}  {\rm e}^{ i \, x \cdot \xi}  {\rm e}^{  i \, \tau_n |\xi|^2}\, \theta( |\xi|) \, \theta(  {\rm e}^{ \alpha_n}-|\xi|)\, \frac{1}{|\xi|^2}\, d \xi \\ &+&   \, \wt{\rm
r}_n(x) + {\rm
r}^\#_n(x),\end{eqnarray*}
where
\begin{eqnarray*} {\rm
r}^\#_n (x) &=& \frac{1}{(2\pi)^2} \sqrt{\frac{2\pi}{\alpha_n}}\int_{ 1  \leq |\xi| \leq 3}  {\rm e}^{ i \, x \cdot \xi} \, {\rm e}^{ i \, \tau_n  |\xi|^2}\,\big(1- \theta( |\xi|)\big) \,  \frac{1}{|\xi|^2}\, d \xi \\ &+&  \frac{1}{(2\pi)^2} \sqrt{\frac{2\pi}{\alpha_n}}\int_{{\rm e}^{ \alpha_n} - 3  \leq |\xi| \leq {\rm e}^{ \alpha_n} }  {\rm e}^{ i \, x \cdot \xi} \, {\rm e}^{ i \, \tau_n  |\xi|^2}\, \big(1-\theta(  {\rm e}^{ \alpha_n}-|\xi|)\big)\, \frac{1}{|\xi|^2}\, d \xi,\end{eqnarray*}
and verifies $\|
{\rm
r}^\#_n\|_{H^1} \stackrel{n\to\infty}\longrightarrow 0$. \\

 \noindent Now let us denote
 $$ u_{0,n}(x):= \frac{1}{(2\pi)^2} \sqrt{\frac{2\pi}{\alpha_n}}\int_{ 1  \leq |\xi| \leq {\rm e}^{ \alpha_n}}  {\rm e}^{ i \, x \cdot \xi}  \, \theta( |\xi|) \, \theta(  {\rm e}^{ \alpha_n}-|\xi|)\, \frac{1}{|\xi|^2}\, d \xi.$$
 Using the classical estimate
 $$ \| |\cdot|\,{\rm e}^{ - i \tau_n \Delta} \, \nabla u_{0,n}\|_{L^2} \lesssim  \| |\cdot| \, \nabla u_{0,n}\|_{L^2} + |\tau_n|  \,\|  \Delta u_{0,n}\|_{L^2},$$
 we deduce that \begin{equation} \label{eqbound}\| |\cdot|\,{\rm e}^{ - i \tau_n \Delta} \, \nabla u_{0,n}\|_{L^2} \lesssim  1.\end{equation} Indeed, on the one hand
$$ \|  \Delta u_{0,n}\|^2_{L^2} \lesssim \frac{1}{\alpha_n} \int_{ 1  \leq |\xi| \leq {\rm e}^{ \alpha_n}}  d \xi \lesssim \frac{ {\rm e}^{2 \alpha_n}}{\alpha_n},  $$
thus $\tau_n  \,\|  \Delta u_{0,n}\|_{L^2} \lesssim 1$ since by assumption $ \frac {\tau_n {\rm e}^{\alpha_n}} {\sqrt{\alpha_n}}   \lesssim 1 \cdot$ \\

 \noindent  On the other hand \begin{eqnarray*} \| |\cdot| \, \nabla u_{0,n}\|_{L^2} &\leq & \| |\cdot| \, \nabla u_{0,n}\|_{L^2(|x|\leq 1)} + \| |\cdot| \, \nabla u_{0,n}\|_{L^2(|x|\geq 1)} \\ &\leq & \| \nabla u_{0,n}\|_{L^2} + \| |\cdot| \, \nabla u_{0,n}\|_{L^2(|x|\geq 1)}.\end{eqnarray*}
To achieve the proof of \eqref{eqbound}, it suffices then to bound $\| |\cdot| \, \nabla u_{0,n}\|_{L^2(|x|\geq 1)}$. To go to this end, we shall perform integration by parts with respect to the vector fields
$$  {\mathcal X} := -i  \sum^{i=2}_{i=1} \frac{ x_i \, \partial_{\xi_i}}{|x|^2},$$
which satisfies ${\mathcal X} ( {\rm e}^{ i \, x \cdot \xi})=  {\rm e}^{ i \, x \cdot \xi}$.   \\

 \noindent More precisely, taking advantage of the fact that $\ds \frac{\theta( |\xi|) \, \theta(  {\rm e}^{ \alpha_n}-|\xi|)}{|\xi|}$ is compactly supported, we get for any nonnegative integer $N$ and $i \in \{1,2\}$
 $$ ( \partial_ {x_i}  u_{0,n}) (x)=  \frac{(-1)^N}{(2\pi)^2} \sqrt{\frac{2\pi}{\alpha_n}}\int_{ 1  \leq |\xi| \leq {\rm e}^{ \alpha_n}}  {\rm e}^{ i \, x \cdot \xi}  \, {\mathcal X}^N \Big(\frac{i \,\xi_ {i}\,\theta( |\xi|) \, \theta(  {\rm e}^{ \alpha_n}-|\xi|)}{|\xi|^2}\Big)\, d \xi.$$
 Taking advantage of the fact that $$\theta( |\xi|) \, \theta(  {\rm e}^{ \alpha_n}-|\xi|)  \equiv 1 \quad \mbox{for} \quad 3 \leq |\xi| \leq  {\rm e}^{ \alpha_n} -3,$$  we obtain
 $$ \Big| {\mathcal X}^N \Big(\frac{i \,\xi_ {i}\,\theta( |\xi|) \, \theta(  {\rm e}^{ \alpha_n}-|\xi|)}{|\xi|^2}\Big)\Big|  \lesssim \frac{1}{|x|^N} \, \left(\frac{1}{|\xi|^{N+1}} +\frac{\bf{1}_{  \{1  \leq |\xi| \leq 3 \}\cup  \{ {\rm e}^{ \alpha_n} -3  \leq |\xi| \leq {\rm e}^{ \alpha_n} \} } }{|\xi|} \right),$$
 which leads  to
 $$ \big|\, |x| \, \nabla u_{0,n}(x)\big| \lesssim  \frac{1}{\sqrt{\alpha_n}\, |x|^{N-1}}  $$
uniformly for $|x|\geq 1$ and achieves the proof of the fact that
 $ \| |\cdot| \, \nabla u_{0,n}\|^2_{L^2} \lesssim 1. $\\

 In conclusion, we have
 $$ u(t_n, \cdot) = {\rm e}^{- i \tau_n \Delta} \, u_{0,n} + {\rm
r}_n^\flat,$$
with $\| |\cdot|\,{\rm e}^{ - i \tau_n \Delta} \, \nabla u_{0,n}\|_{L^2} \lesssim  1$ and $\| \nabla{\rm
r}^\flat_n\|_{L^2} \stackrel{n\to\infty}\longrightarrow 0$.  Thus, for all $R> 0$
$$   \left( \int_{|x|\geq R}\, |\nabla u (t_n,x)|^2 \, dx\right)^{\frac 1 2} \lesssim \frac 1 R \| |\cdot|\,{\rm e}^{ - i \tau_n \Delta} \, \nabla u_{0,n}\|_{L^2} + \| \nabla{\rm
r}^\flat_n\|_{L^2} \lesssim \frac 1 R + \circ(1).  $$
This implies that for $n$ and $R$ large enough
$$ \left( \int_{|x|\geq R}\, |\nabla u (t_n,x)|^2 \, dx\right)^{\frac 1 2} \leq \frac {\varepsilon_0} 2,$$
which contradicts \eqref{mainpoint} and shows that this case cannot occur.

  \medbreak

 %%%%%%%%%%%%%%%%%%%%%%%%%%%%%%%%%%%%%

 %%%%%%%%%%%%%%%%%%%%%%%%%%%%%%%%%%%%%%%%%%%%%

%@@@@@@@@@@@@@@@@@@@@@@@@@@@@@@@@@@@@%@@@@@@@@@@@@@@@@@@@@@@@@@@@@@@@@@@@@%@@@@@@@@@@@@@@@

 \appendix

%@@@@@@@@@@@@@@@@@@@@@@@@@@@@@@@@@@@@%@@@@@@@@

%@@@@@@@@@@@@@@@@@@@@@@@@@@@@@@@@@@@@%@@@@@@@@@@@@@@@@@@@@@@@@@@@@@@@@@@@@%@@@@@@@@

%@@@@@@@@@@@@@@@@@@@@@@@@@@@@@@@@@@@@%@@@@@@@@@@@@@@@@@@@@@@@@@@@@@@@@@@@@%@@@@@@@@

%%%%%%%%%%%%%%%%%%%%%%%%%%%%%%%%%%%%%%%%%%%%%

\section{ A useful  Moser-Trudinger inequality}\label{usefullMT}
\subsection{Notion of rearrangement}
Before proving  Proposition  \ref{usconsbis}  which plays a key role in our approach, let us  start with an overview of the notion of rearrangement of functions involving in its proof.  This notion consists in associating to any measurable function vanishing at infinity, a nonnegative decreasing radially symmetric function. To begin with, let us first define the symmetric rearrangement of a measurable set.
\begin{defi}
\label{rearset}
Let $A\subset\R^d$ be a Borel set of finite Lebesgue measure. We define $A^*$, the symmetric rearrangement of $A$, to be the open ball centered at the origin whose volume is that of $A$. Thus,
$$A^*=\{x\,:\, |x|<R\}\quad\mbox{with}\quad\left(|\mathbb{S}^{d-1}|/d\right)R^d=|A|\,,
$$
where $|\mathbb{S}^{d-1}|$ is the surface area of the unit sphere $\mathbb{S}^{d-1}$.

\end{defi}
This definition allows us to define in an obvious way the symmetric-decreasing rearrangement of a characteristic function of a set, namely
$$
\chi^*_{A}:=\chi_{A^*}\,.
$$
More generally, if $f :\R^d\to \C$ is a measurable function vanishing at infinity {\rm i.e.}
$$
\forall\;t>0,\qquad\; \Big|\{x\,:\,|f(x)|>t\}\Big|<\infty\,,
$$
then we define the symmetric decreasing rearrangement, $f^*$, of $f$ as
\bq
\label{Rear}
f^*(x)=\int_0^\infty\,\chi^*_{\{|f|>t\}}(x)\,dt\,.
\eq

In the following proposition,  we collect  without proofs  some  features of the rearrangement $f^*$ (for a
complete presentation and more details, we refer the reader to \cite{Kes, Analysis,  Talenti} and the references therein), namely that the process of Schwarz symmetrization minimize the energy and preserves Lebesgue and Orlicz norms:
\begin{prop}
\label{norminvar}
Let  $f\in H^1(\R^2)$ then
\beqn
\|\nabla f\|_{L^2}&\geq &\|\nabla f^*\|_{L^2},\\
\|f\|_{L^p}&=&\|f^*\|_{L^p},\\
\|f\|_{\cL}&=&\|f^*\|_{\cL}\,.
\eeqn
\end{prop}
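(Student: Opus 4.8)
The plan is to isolate the one genuinely analytic assertion — the gradient inequality, i.e.\ the P\'olya--Szeg\H{o} inequality — from the two norm identities, which are purely measure-theoretic consequences of equimeasurability. First I would record the fact underlying everything: by the layer-cake definition \eqref{Rear}, the superlevel sets of $f^*$ are the symmetric rearrangements of those of $f$, so that $f$ and $f^*$ are equimeasurable,
$$
\big|\{x\in\R^2 : |f^*(x)|>t\}\big| = \big|\{x\in\R^2 : |f(x)|>t\}\big| \quad\text{for every } t>0 .
$$

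For the $L^p$ identity I would simply invoke the layer-cake representation $\int_{\R^2}|f|^p\,dx=\int_0^\infty p\,t^{p-1}\,|\{|f|>t\}|\,dt$ together with the same formula for $f^*$; equimeasurability makes the two right-hand sides coincide. The Orlicz identity is only slightly more involved. For each fixed $\lambda>0$, since $\phi$ is increasing the quantity $\int_{\R^2}\phi(|f|/\lambda)\,dx$ depends on $f$ only through its distribution function (again by layer-cake applied to the increasing map $s\mapsto\phi(s/\lambda)$), so it equals $\int_{\R^2}\phi(|f^*|/\lambda)\,dx$. Hence the admissible sets $\{\lambda>0 : \int_{\R^2}\phi(|\cdot|/\lambda)\,dx\le 1\}$ attached to $f$ and to $f^*$ are identical, and by Definition \ref{deforl} their infima — the two Orlicz norms — agree.

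The substantive part is $\|\nabla f\|_{L^2}\ge\|\nabla f^*\|_{L^2}$, which I would obtain by combining the coarea formula with the isoperimetric inequality in $\R^2$. Writing $\mu(t)=|\{|f|>t\}|$ and applying coarea twice gives, on the one hand, $\int_{\R^2}|\nabla f|^2\,dx=\int_0^\infty\big(\int_{\{|f|=t\}}|\nabla f|\,d\mathcal{H}^1\big)\,dt$ and, on the other, $-\mu'(t)=\int_{\{|f|=t\}}|\nabla f|^{-1}\,d\mathcal{H}^1$. Cauchy--Schwarz on each level curve then yields
$$
\int_{\{|f|=t\}}|\nabla f|\,d\mathcal{H}^1 \;\ge\; \frac{\big(\mathcal{H}^1(\{|f|=t\})\big)^2}{-\mu'(t)} .
$$
Here the isoperimetric inequality enters: the level curve of $f$ bounds a set of area $\mu(t)$, hence has length at least $2\sqrt{\pi\,\mu(t)}$, which is exactly the length of the corresponding level circle of the radial function $f^*$. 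For $f^*$ itself $|\nabla f^*|$ is constant on each circle, so Cauchy--Schwarz is an equality and the displayed bound is saturated, giving $\int_{\{|f^*|=t\}}|\nabla f^*|\,d\mathcal{H}^1=(2\sqrt{\pi\mu(t)})^2/(-\mu'(t))$. Integrating the resulting pointwise-in-$t$ inequality over $(0,\infty)$ and reading coarea in reverse produces $\|\nabla f\|_{L^2}^2\ge\|\nabla f^*\|_{L^2}^2$.

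The main obstacle is the rigorous handling of this coarea step: the formula and the Cauchy--Schwarz manipulation are valid only on regular level sets, so one must control the critical set $\{\nabla f=0\}$ and justify that $\mu$ is absolutely continuous with $-\mu'$ given by the stated integral for a.e.\ $t$. I would dispatch this in the standard way, first proving the inequality for $f\in C_c^\infty(\R^2)$ (where Sard's theorem makes almost every $t$ a regular value), and then extending to general $f\in H^1(\R^2)$ by approximation: if $f_n\to f$ in $H^1$ with $f_n\in C_c^\infty(\R^2)$, then $f_n^*\to f^*$ in $L^2$ by the nonexpansivity of rearrangement, and the weak lower semicontinuity of the Dirichlet energy lets one pass to the limit in $\|\nabla f_n\|_{L^2}\ge\|\nabla f_n^*\|_{L^2}$. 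Since the paper only needs the three identities as black-box tools, recording this coarea--isoperimetric skeleton while citing \cite{Kes, Analysis, Talenti} for the technical regularity points is the most economical route.
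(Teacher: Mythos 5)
The paper offers no proof of Proposition \ref{norminvar} at all: it is explicitly stated as a collection of known facts, ``without proofs'', with the reader referred to \cite{Kes, Analysis, Talenti}. So there is no argument in the text to compare yours against; what you have done is reconstruct the standard proof contained in precisely those references. Your treatment of the two identities is complete and correct: equimeasurability of $f$ and $f^*$ follows from the layer-cake definition \eqref{Rear}, the $L^p$ identity is then immediate, and for the Orlicz norm your observation that $\int_{\R^2}\phi(|f|/\lambda)\,dx$ depends on $f$ only through its distribution function shows that the admissible sets of $\lambda$ in Definition \ref{deforl} coincide for $f$ and $f^*$, hence so do their infima. For the P\'olya--Szeg\H{o} inequality, the coarea--Cauchy--Schwarz--isoperimetry chain is the classical (Talenti-style) argument, and your closing step by smooth approximation, $L^2$-nonexpansivity of rearrangement, and weak lower semicontinuity of the Dirichlet energy is sound.

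One step deserves more care than you give it, and you slightly misplace where the real difficulty sits. Your saturation claim for $f^*$, namely $\int_{\{f^*=t\}}|\nabla f^*|\,d\mathcal{H}^1=4\pi\mu(t)/(-\mu'(t))$, uses not only equality in Cauchy--Schwarz (which is fine, $|\nabla f^*|$ being constant on circles) but also the exact identity $-\mu'(t)=\int_{\{f^*=t\}}|\nabla f^*|^{-1}\,d\mathcal{H}^1$ for a.e.\ $t$. This is the delicate point: the general coarea argument only yields $-\mu'(t)\ge\int_{\{|g|=t\}}|\nabla g|^{-1}\,d\mathcal{H}^1$, and while that one-sided bound is exactly what the chain needs on the $f$ side (an inflated $-\mu'$ only weakens the lower bound, never breaks it), on the $f^*$ side you need the inequality in the \emph{opposite} direction, so equality is essential there. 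Moreover $f^*$ need not be smooth even when $f\in C_c^\infty(\R^2)$, so Sard's theorem applied to $f$ does not settle it. The standard rigorous fix is to avoid coarea on the rearranged side altogether: write $f^*(x)=\rho(|x|)$ with $\rho$ nonincreasing, so that $\|\nabla f^*\|_{L^2}^2=2\pi\int_0^\infty \rho'(r)^2\,r\,dr$ and $\mu(t)=\pi R(t)^2$ with $R$ the (generalized) inverse of $\rho$, and check by the one-dimensional monotone change of variables that $2\pi\int_0^\infty \rho'(r)^2\,r\,dr\le\int_0^\infty 4\pi\mu(t)/(-\mu'(t))\,dt$, the inequality (rather than equality) being all that is required; alternatively, \cite{Analysis} bypasses coarea entirely via the heat-kernel/Riesz rearrangement argument. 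Since the paper itself uses the proposition as a black box, your plan of citing \cite{Kes, Analysis, Talenti} for these regularity points is a legitimate way to finish, but the citation should cover the $f^*$ side of the computation, not merely the critical set of $f$.
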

\subsection{Proofs  of Proposition \ref{usconsbis} and Corollaries  \ref{mostrud},  \ref{cormoser} and \ref{uscons*}}
\subsubsection{Proof  of Proposition  \ref{usconsbis}} \label{proposition} The proof of  inequalities \refeq{Mosbis2}  uses in a crucial way the rearrangement of functions defined above.
 By virtue of  density arguments and Proposition \ref{norminvar},
one can reduce to the case of a nonnegative radially symmetric and nonincreasing function  $u$ belonging to  ${\cD}(\R^2)$.  With this choice, let us introduce the function $$w(t)=\sqrt{4\pi} \, u(|x|), \quad \mbox{where} \quad|x|={\rm e}^{-\frac{t}{2}}.$$ It is then obvious  that the functions  $ w(t) $ and   $ \dot{w}(t) $ are nonnegative and
\begin{eqnarray*}
  \int_{\mathbb{R}^2}|\nabla u(x)|^2 \,dx &=& \int_{-\infty}^{+\infty}|{w}'(t)|^2 \,dt, \\ \int_{\mathbb{R}^2}|u(x)|^p\,dx &=& \frac{\pi}{(\sqrt{4\pi} )^p}\int_{-\infty}^{+\infty} |w(t)|^p~{\rm e}^{-t}\,dt \quad \mbox{and}
   \\
 \int_{\mathbb{R}^2} {\rm e}^{\alpha |u(x)|^2} \, |u(x)|^p\, dx &=& \frac{\pi}{(\sqrt{4\pi} )^p} \int_{-\infty}^{+\infty} {\rm e}^{\frac{\alpha}{4\pi}|w(t)|^2}\, |w(t)|^p \,{\rm e}^{-t}\,dt.
\end{eqnarray*} Besides since $u \in {\cD}(\R^2)$,  there exists $t_0 \in \R$ such that
$$w(t) = 0, ~\mbox{\, for \, }t \leq t_0. $$ So we are reduced to prove that for all $\beta\in  [0,1 [$, there  exists $C_{\beta}\geq 0$ so that \begin{equation}\label{res}
\int_{-\infty}^{+\infty} {\rm e}^{\beta |w(t)|^2}  |w(t)|^p {\rm e}^{-t} dt \leq C_{\beta} \int_{-\infty}^{+\infty}|w(t)|^p {\rm e}^{-t}\,dt,\quad \forall \,\beta\in  [0,1 [,
\end{equation}
provided that
\begin{equation}\label{M3}
    \int_{-\infty}^{+\infty}|{w}'(t)|^2 dt \leq1.
\end{equation}
For that purpose, let us  set $$T_{0}:=\sup \left\{t \in \mathbb{R},~w(t)\leq 1\right\}.$$
Knowing that $w$ is nonnegative  and increasing function, we deduce that
$$ w:   ] -\infty,T_0 ] \longrightarrow  [0,1]. $$
It is then obvious that
\begin{equation*}
\int_{-\infty}^{T_0} {\rm e}^{\beta |w(t)|^2}  |w(t)|^p {\rm e}^{-t} dt \leq {\rm e}^{\beta} \int_{-\infty}^{T_0} |w(t)|^p {\rm e}^{-t} dt.
\end{equation*}
To estimate  the integral on  $[T_0,+\infty[$, let us first notice that   in view of \eqref{M3},  we have for all $t\geq T_0$  \begin{eqnarray*}
  w(t) &=& w(T_0)+\int_{T_0}^t{w}'(\tau)d\tau \\
  &\leq& w(T_0)+(t-T_0)^{\frac{1}{2}}\left(\int_{T_0}^{+\infty}{w'}(\tau)^2 d\tau\right)^{\frac{1}{2}}\\
  &\leq& 1+(t-T_0)^{\frac{1}{2}}.\end{eqnarray*}
Thus, using the fact  that for any $\varepsilon > 0 $ and any $s\geq 0$,  we have
$$(1+s^{\frac{1}{2}})^2 \leq (1+ \varepsilon) s + 1+\frac{1}{\varepsilon}=(1+ \varepsilon) s+C_{\varepsilon},$$
we infer that  for any $\varepsilon > 0 $ and all $t\geq T_0$
\begin{equation}\label{estus}|w(t)|^2 \leq  (1+\varepsilon)(t-T_0)+ C_{\varepsilon}. \end{equation}
Now  $\beta$ being fixed in  $[0,1 [$, let us choose    $\varepsilon>0$  so that $\beta(1+  \varepsilon)^2<1$.  Therefore  in light  of  \eqref{estus}, we obtain
\begin{eqnarray*}
        \int_{T_0}^{+\infty}  {\rm e}^{\beta |w(t)|^2}  |w(t)|^p {\rm e}^{-t} dt &\leq &  \int_{T_0}^{+\infty}{\rm e}^{\beta ((1+\varepsilon)(t-T_0) + C_{\varepsilon})}\,\big( (1+\varepsilon)(t-T_0)+ C_{\varepsilon}\big)^{\frac p 2}\, {\rm e}^{-t}dt \\
         &\leq&  C(p,\varepsilon)\int_{T_0}^{+\infty}{\rm e}^{\beta(1+\varepsilon) ((1+\varepsilon)(t-T_0) + C_{\varepsilon})}\, {\rm e}^{-t}dt \\
        &\leq& C(p,\varepsilon) \, \frac{{\rm e}^{\beta (1+\varepsilon) C_{\varepsilon}-T_0}}{1-\beta(1+\varepsilon)^2}\cdot
      \end{eqnarray*}
           Finally observing that
$${\rm e}^{-T_0}=\int_{T_0}^{+\infty} {\rm e}^{-t}\,dt \leq \int_{T_0}^{+\infty}  |w(t) |^p \, {\rm e}^{-t}\,dt,$$
we end up with the result.

\subsubsection{Proof   Corollary \ref{mostrud}}  Corollary \ref{mostrud} derives immediately from Proposition \ref{usconsbis} once we have observed that
$$ \left({\rm e}^{\alpha
|u(x)|^2}-1- \alpha
|u(x)|^2\right) \lesssim {\rm e}^{\alpha
|u(x)|^2} |u(x)|^4. $$
\subsubsection{Proof   Corollary  \ref{cormoser}}  Corollary \ref{cormoser} follows easily  from   Proposition \ref{mostrud} by applying the estimate \eqref{Mosbis} to the sequence $(v_n)$ defined by
$$v_n:= \frac{u_n} {\|\nabla u_n\|_{L^2} + \sqrt{\|u_n\|_{L^4}}}\cdot$$
\subsubsection{Proof   Corollary \ref{uscons*}} Firstly  it is obvious  that
$$ \int_{ |u| \leq 1}\,{\rm e}^{4\pi (1+\varepsilon)
|u(x)|^2} \, |u(x)|^p \, dx \lesssim \int_{  \R^2} \,|u(x)|^p \,dx.$$
Secondly  making use of H\"older inequality, we infer that for any real $r  \geq 1$, we have
\begin{eqnarray*}
      \int_{ |u| \geq 1}\, {\rm e}^{4\pi (1+\varepsilon)
|u(x)|^2}\, |u(x)|^{p} \, dx &\lesssim &   \Big(\int_{ |u| \geq 1}\, {\rm e}^{4\pi r (1+\varepsilon- \varepsilon(2+\frac 1 \delta))
|u(x)|^2}\, |u(x)|^{pr} \, dx\Big)^{\frac 1 r } \\
         &\times&  \Big(\int_{ |u| \geq 1}\, {\rm e}^{4\pi r'  \varepsilon(2+\frac 1 \delta)
|u(x)|^2}\, dx\Big)^{\frac 1 {r' }},
      \end{eqnarray*}
where $r'$  denotes  the conjugate exponent of $r$.\\

 \noindent The choice  $\ds r' = \frac {1+ \delta} {\varepsilon(2+\frac 1 \delta)}$ gives rise to
\begin{eqnarray*}
     \int_{ |u| \geq 1}\, {\rm e}^{4\pi r'  \varepsilon(2+\frac 1 \delta)
|u(x)|^2}\, dx &=  &    \int_{ |u| \geq 1}\, {\rm e}^{4\pi (1+ \delta)
|u(x)|^2}\, dx \\
         &\leq & C( \delta) \int_{ |u| \geq 1}\,  \left({\rm e}^{4\pi (1+ \delta)
|u(x)|^2} -1- 4\pi (1+ \delta)
|u(x)|^2\right)\, dx \\
         &\leq &  C( \delta) \kappa,
      \end{eqnarray*}
in view of  the assumption $\|u\|_{\wt {\mathcal
L}} \leq \frac 1 {\sqrt{4\pi (1+ 2 \delta)}}\cdot$ \\

 \noindent Consequently
 $$      \int_{ |u| \geq 1}\, {\rm e}^{4\pi (1+\varepsilon)
|u(x)|^2}\, |u(x)|^{p} \, dx \leq  (C( \delta) \kappa)^{\frac 1 {r' }}   \Big(\int_{ |u| \geq 1}\, {\rm e}^{4\pi r (1+\varepsilon- \varepsilon(2+\frac 1 \delta))
|u(x)|^2}\, |u(x)|^{pr} \, dx\Big)^{\frac 1 r }\cdot $$
But $$ r (1+\varepsilon- \varepsilon(2+\frac 1 \delta))=  1- \frac  {\delta  \varepsilon} {1 + \delta }+ {\mathcal{O}}(\varepsilon^2),$$
which  by virtue of  Proposition \ref{usconsbis} leads to
 $$      \int_{ |u| \geq 1}\, {\rm e}^{4\pi (1+\varepsilon)
|u(x)|^2}\, |u(x)|^{p} \, dx \leq  (C( \delta) \kappa)^{\frac 1 {r' }}   \Big(\int_{ |u| \geq 1}\, |u(x)|^{pr} \, dx\Big)^{\frac 1 r },$$
provided that $\varepsilon$ is sufficiently small. This ends the proof of the corollary.

 %@@@@@@@@@@@@@@@@@@@@@@@@@@@@@@@@@@@@%@@@@@@@@@@@@@@@@@@@@@@@@@@@@@@@@@@@@%@@@@@@@@

%@@@@@@@@@@@@@@@@@@@@@@@@@@@@@@@@@@@@%@@@@@@@@@@@@@@@@@@@@@@@@@@@@@@@@@@@@%@@@@@@@@

%@@@@@@@@@@@@@@@@@@@@@@@@@@@@@@@@@@@@%@@@@@@@@@@@@@@@@@@@@@@@@@@@@@@@@@@@@%@@@@@@@@

\end{document}